\newtheorem{theorem}{Theorem}[section]
\newtheorem{lemma}[theorem]{Lemma}
\newtheorem{corollary}[theorem]{Corollary}
\newtheorem{proposition}[theorem]{Proposition}
\theoremstyle{definition}
\newtheorem{definition}[theorem]{Definition}
\newtheorem{example}[theorem]{Example}
\newtheorem{conjecture}[theorem]{Conjecture}
\theoremstyle{remark}
\newtheorem{remark}[theorem]{Remark}
\newtheorem{caution}[theorem]{Caution}
\newtheorem{observation}[theorem]{Observation}
\numberwithin{equation}{section}
\DeclareMathOperator{\GL}{GL}
\DeclareMathOperator{\Img}{Im}
\DeclareMathOperator{\SL}{SL}
\DeclareMathOperator{\Span}{span}
\DeclareMathOperator{\T}{T}
\DeclareMathOperator{\Tr}{Tr}
\DeclareMathOperator{\Li}{Li}
\let\Theta\varTheta
\newcommand{\op}[1]{\operatorname{#1}}
\renewcommand{\t}{{\operatorname{t}}}
\newcommand{\Nm}{\mc{N}}
\newcommand{\mb}[1]{\mathbb{#1}}
\newcommand{\mc}[1]{\mathcal{#1}}
\newcommand{\mf}[1]{\mathfrak{#1}}
\renewcommand{\b}[1]{\bold{#1}}
\renewcommand{\O}{{\mc{O}}}
\newcommand{\Q}{{\mc{Q}}}
\renewcommand{\a}{{\mf{a}}}
\newcommand{\m}{{\mf{m}}}
\newcommand{\ep}{{\epsilon}}
\newcommand{\br}[1]{\overline{#1}}
\newcommand{\innerprod}[1]{\langle#1\rangle}
\newcommand{\sm}[1]{\left(\begin{smallmatrix}#1\end{smallmatrix}\right)}
\newcommand{\wtd}[1]{\widetilde{#1}}
\newcommand{\Lt}{\tilde{L}}
\renewcommand{\Re}{\operatorname{Re}}
\newcommand{\mtd}{{\wtd{m}}}
\newcommand{\modm}{\mod\!\!^{\times}\ }
\begin{document}

\title{Mahler Measure of 3D Landau-Ginzburg Potentials}
\author{Jiarui Fei}
\address{School of Mathematical Sciences, Shanghai Jiao Tong University}
\email{jiarui@sjtu.edu.cn}
\thanks{The author was supported in part by National Science Foundation of China (No. BC0710141)}

\subjclass[2010]{Primary 11R06; Secondary 11F67}

\date{}
\dedicatory{In Memory of John Horton Conway}
\keywords{Mahler Measure, $K3$ Surface, Landau-Ginzburg Potential, Hecke $L$-function, Modular Form, Eisenstein-Kronecker Series, Picard-Fuchs Equation, Mutation}

\begin{abstract} We express the Mahler measures of $23$ families of Laurent polynomials in terms of Eisenstein-Kronecker series.
These Laurent polynomials arise as Landau-Ginzburg potentials on Fano $3$-folds,
$16$ of which define $K3$ hypersurfaces of generic Picard rank $19$, and the rest are of generic Picard rank $< 19$.
We relate the Mahler measure at each rational singular moduli to the value at $3$ of the $L$-function of some weight-$3$ newform.
Moreover, we find $10$ exotic relations among the Mahler measures of these families.
\end{abstract}
\maketitle
\section*{Introduction}
The (logarithmic) {\em Mahler measure} of a Laurent polynomial $f\in\mb{C}[x_1^{\pm 1},x_2^{\pm 1},\dots,x_n^{\pm 1}]$ is the arithmetic average of $\log|f|$ over the $n$-dimensional torus $\mb{T}^n$:
	\begin{align*} m(f) &:= \frac{1}{(2\pi i)^n} \int_{\mb{T}^n} \log \left|f(x_1,\dots,x_n)\right| \frac{dx_1}{x_1}\cdots \frac{dx_n}{x_n}.
	\end{align*}
In the mid 1990s, Boyd \cite{Bo} (after a suggestion of Deninger) found by numerical experiment many identities of the form
$$m(f(x,y)) = s L'(E,0),$$
where the polynomial $f$ defines the elliptic curve $E$ and $s$ is some rational number.
He conjectured that such identities should hold under some additional conditions on $f$.
Many conjectural identities were verified but the general cases remain open.

The first powerful idea was introduced by F. Villegas \cite{Vi}.
Motivated by the mirror symmetry, he found appropriate families of Laurent polynomials parametrized by modular functions, then he could express their Mahler measures in terms of certain Eisenstein-Kronecker series.
Finally he linked those series to the $L$-function of elliptic curves with complex multiplication.

The second more general idea due to Deninger is related to Bloch-Beilinson's conjecture, which was also explained in \cite{Vi}.
This approach via {\em regulators} was further pursued by Mellit, Zudilin and Brunault.
Instead of referring their original papers, we recommend the new textbook \cite{BZ}.

After the elliptic curves, Bertin considered the case of $K3$ surfaces \cite{Be1,Be2}.
She treated two families, and later Samart treated four hypergeometric families \cite{Sa,Sa0}.
They together proved over $16$ identities of the similar nature.

We hope to push Villegas's ideas and Bertin's work further to other $3$-variable Laurent polynomials.
One difficulty is that a random choice of families won't have nice modular parametrization.
The first goal of this paper is to find more reasonable families to study.
Thanks to the work of Golyshev and others, we find $25$ interesting families of Laurent polynomials originated from certain version of mirror symmetry theory (see Section \ref{S:families} for details).
They were listed in Table \ref{tab:Lp} and \ref{tab:Lp2}, among which are the six families studied by Bertin and Samart.

As the first step of Villegas and Bertin's approach, we need the modular parametrization of these pencils of $K3$ hypersurfaces.
According to \cite{Go,Ga}, all the parameters $c(\tau)$ are modular functions of {\em moonshine} type, which agrees with the general phenomenon observed in \cite{LY} and proved in \cite{Dor}. 
We summarize these results in the left three columns of Table \ref{tab:forms}.
We define a complex-valued function $\mtd(c(\tau))$ whose real part agrees with the Mahler measure of $f-c(\tau)$ except in a bounded open region.
Then we are able to express the function $\mtd(c(\tau))$ in terms of certain Eisenstein-Kronecker series.
We find that some calculation made by Bertin and Samart could be generalized and performed uniformly.

\begin{theorem} \label{T:intro1} For each $f_c$ of the $25$ families except $V_2$ and $B_1$, let $\mc{F}$ be any fundamental domain of the modular function $c(\tau)$ containing $i\infty$. Then for any $\tau \in \mc{F}$ the function $\Re(\mtd(c(\tau)))$ is equal to
	$$ \frac{\Img \tau}{(2\pi)^3} \sum_{d \mid N} a_d d^2H_d(\tau),$$
	where $H_d(\tau)$ is the series 
	$\sum_{m,n}' 2\Re \left( (dm\tau +n)^{-3}(dm\br{\tau}+n)^{-1}\right) + (dm\tau +n)^{-2}(dm\br{\tau}+n)^{-2}$ and $a_d\in\mb{Z}$ is given by the column $e(\tau)$ of Table \ref{tab:forms}. 
\end{theorem}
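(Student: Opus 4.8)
The plan is to realise both sides as harmonic functions of $\tau$ on the interior of $\mc{F}$ sharing the same $\partial/\partial\br{\tau}$, and then fix the residual additive constant at the cusp $i\infty$. Write $f_c=f-c$ and set $\varpi(c)=\sum_{n\ge0}[f^n]_0\,c^{-n}$, where $[g]_0$ denotes the constant term of a Laurent polynomial $g$; this is the holomorphic period of the pencil of $K3$ hypersurfaces, the unique solution of its Picard--Fuchs equation holomorphic at the MUM point $c=\infty$ with $\varpi=1+O(c^{-1})$. For $|c|>\max_{\mb{T}^3}|f|$, expanding $\log|f-c|=\Re\log c+\Re\log(1-f/c)$ and integrating term by term over $\mb{T}^3$ gives $m(f-c)=\Re\bigl(\log c-\sum_{n\ge1}\frac{[f^n]_0}{n}c^{-n}\bigr)$; since $\mtd$ is the analytic continuation of this series with branch fixed near $c=\infty$, one obtains $d\mtd/dc=\varpi(c)/c$. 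Taking $\mc{F}$ simply connected, $\phi(\tau):=\mtd(c(\tau))$ is holomorphic on $\mc{F}$ off the finitely many singular fibres, across which $\Re\phi$ nevertheless stays continuous (being glued to the Mahler measure near $i\infty$); so $\Re\phi$ is harmonic there, and it suffices to prove $\Re\phi=g$ with $g(\tau):=\frac{\Img\tau}{(2\pi)^3}\sum_{d\mid N}a_d d^2H_d(\tau)$.

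The crux is the modular identity, obtained after substituting the moonshine-type parametrisation $c=c(\tau)$ of \cite{Go,Ga},
	\begin{equation}\label{eq:MM-crux}
	\varpi(c(\tau))\cdot\frac{1}{2\pi i}\frac{d\log c}{d\tau}\;=\;-\frac{1}{240}\sum_{d\mid N}a_d\,d^2E_4(d\tau),\qquad E_4=1+240\sum_{n\ge1}\sigma_3(n)q^n.
	\end{equation}
Here $\frac{1}{2\pi i}\,d\log c/d\tau$ is a weight-$2$ meromorphic modular form on $\Gamma_0(N)$, and --- the Picard--Fuchs operator being (essentially) the symmetric square of the order-two equation uniformised by $\tau\mapsto c(\tau)$ --- its holomorphic solution $\varpi(c(\tau))$ is likewise a weight-$2$ modular form, so both sides of \eqref{eq:MM-crux} are weight-$4$ forms. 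Proving \eqref{eq:MM-crux} is the step that promotes Bertin's and Samart's case-by-case computations to a uniform one, and it splits into: \textbf{(i)} the left side is \emph{holomorphic}, the poles at the zeros of $c(\tau)$ being absorbed by zeros of $\varpi$ forced by the local exponents of the Picard--Fuchs equation; \textbf{(ii)} it is \emph{Eisenstein}, with vanishing cusp-form part, checked by comparing the first $\dim S_4(\Gamma_0(N))+1$ Fourier coefficients --- this is exactly where the two families $V_2$ and $B_1$ fail and have to be excluded; and \textbf{(iii)} matching the remaining coefficients (read off from the $[f^n]_0$ and the $q$-expansion of $c(\tau)$) against the Atkin--Lehner--compatible combination $\sum_d a_d d^2E_4(d\tau)$, whose constant term forces $\sum_{d\mid N}a_d d^2=240$ and whose lower coefficients fix the integers $a_d$ listed under $e(\tau)$ in Table \ref{tab:forms}. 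Steps \textbf{(ii)}--\textbf{(iii)} are the main obstacle; the rest is bookkeeping with the data of Tables \ref{tab:Lp}--\ref{tab:forms}.

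It remains to connect \eqref{eq:MM-crux} to the series $H_d$. With $\omega=dm\tau+n$, the algebraic identity $\omega^2+\omega\br{\omega}+\br{\omega}^2=(\omega^3-\br{\omega}^3)/(\omega-\br{\omega})$ together with $\omega-\br{\omega}=2i\,dm\,\Img\tau$ rewrites the summand of $H_d$ as $\frac{1}{2i\,dm\,\Img\tau}\bigl(\br{\omega}^{-3}-\omega^{-3}\bigr)$ for $m\ne0$ and as $3n^{-4}$ for $m=0$, so that
	\[
	\Img\tau\cdot d^2H_d(\tau)=\sum_{m\ne0,\,n}\frac{d}{2im}\bigl((dm\br{\tau}+n)^{-3}-(dm\tau+n)^{-3}\bigr)+6\,d^2\zeta(4)\,\Img\tau.
	\]
Applying $\partial/\partial\br{\tau}$ --- only the $\br{\omega}^{-3}$ and the explicit $\Img\tau$ depend on $\br{\tau}$ --- cancels the troublesome $1/m$ and gives $\partial_{\br{\tau}}\bigl(\Img\tau\cdot d^2H_d\bigr)=-\tfrac{3}{2i}d^2\sum_{m,n}'(dm\br{\tau}+n)^{-4}=-\tfrac{\pi^4}{30i}d^2\,\br{E_4(d\tau)}$, using $\sum_{m,n}'(m\sigma+n)^{-4}=\tfrac{\pi^4}{45}E_4(\sigma)$; hence $\partial_{\br{\tau}}g=\tfrac{\pi i}{240}\,\br{\sum_{d\mid N}a_d d^2E_4(d\tau)}$. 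On the other hand $\partial_{\br{\tau}}(\Re\phi)=\tfrac12\,\br{\phi'(\tau)}=\tfrac12\,\br{\varpi(c(\tau))\,d\log c/d\tau}$, which by \eqref{eq:MM-crux} is the same function; so $g-\Re\phi$ is a real-valued holomorphic function on the interior of $\mc{F}$, hence constant, the equality extending over the singular fibres by continuity as the $H_d$ converge on all of $\mb{H}$. Finally, as $\tau\to i\infty$ one has $\Re\phi=m(f-c(\tau))=\log|c(\tau)|+o(1)=2\pi\,\Img\tau+o(1)$ (the Hauptmodul being $q^{-1}+O(q)$), whereas in $g$ the terms with $m\ne0$ tend to $0$ and those with $m=0$ contribute $\tfrac{6\zeta(4)}{(2\pi)^3}\bigl(\sum_{d\mid N}a_d d^2\bigr)\Img\tau=2\pi\,\Img\tau$ (using $\sum a_d d^2=240$ and $\zeta(4)=\pi^4/90$); so the constant is $0$ and $\Re\phi=g$ throughout $\mc{F}$.
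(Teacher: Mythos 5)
Your argument is correct, and its second half takes a genuinely different route from the paper's. The common core is the identification of $e(\tau)=u_0(\tau)D_qt/t$ with the explicit level-$N$ weight-$4$ Eisenstein series: your crux identity $\varpi(c(\tau))\cdot\frac{1}{2\pi i}\frac{d\log c}{d\tau}=-\frac{1}{240}\sum_{d\mid N}a_d d^2E_4(d\tau)$ is exactly the content of \eqref{eq:eq}, and like the paper you reduce it to a finite Fourier-coefficient check (the paper simply asserts it ``by explicit calculation''; note that the two expressions displayed in \eqref{eq:eq} are mutually inconsistent in their constant terms, and your sign is the one that matches the tabulated $a_d$ and the theorem --- I verified this against the period data for $V_8$). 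Where you diverge is in passing from this Eisenstein identity to the lattice sums $H_d$: the paper resums the Lambert series $\sum e_nq^n/n$ directly into an Eisenstein--Kronecker series via the Fourier expansion of $\sum_n\Li_3$ (the Bertin--Samart device around \eqref{eq:DFd}), whereas you apply $\partial_{\br{\tau}}$ to both $\Re(\mtd)$ and $\frac{\Img\tau}{(2\pi)^3}\sum a_dd^2H_d$, identify both derivatives with $\frac{\pi i}{240}\,\br{\sum_da_dd^2E_4(d\tau)}$ using $\sum_{m,n}'(m\sigma+n)^{-4}=\frac{\pi^4}{45}E_4(\sigma)$, and then fix the additive constant from the $2\pi\Img\tau$ asymptotics at the cusp. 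I checked the $\partial_{\br{\tau}}$ computation --- including the cancellation of the $\zeta(4)$ contributions from the $m=0$ terms and the agreement of signs on the two sides --- and it is right; your route avoids the polylogarithm manipulation entirely and makes transparent why the $m=0$ part of $H_d$ reproduces exactly the $-2\pi i\tau$ term of $\mtd$. Two small caveats: to kill the cuspidal part in your step (ii) one must compare coefficients up to the Sturm bound for weight $4$ on $\Gamma_0(N)$, not merely $\dim S_4(\Gamma_0(N))+1$; and the extension of the identity across the finitely many singular fibres inside $\mc{F}$ deserves one more sentence (you really prove it on the connected open set $\mc{F}$ minus those fibres and then invoke continuity of the Mahler measure and of the $H_d$), though the paper glosses over the same point.
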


For some appropriate choice of $\tau$, the series $H_d(\tau)$ turns out to be related to the generalized theta functions (Proposition \ref{P:dHd} and \ref{P:deH}). 
These generalized theta functions are the building blocks for certain cusp forms with rational coefficients (eg., Proposition \ref{P:CM3theta}).
Based on these, we discover and verify more identities similar to Boyd's for these Laurent polynomials.
In particular, we have the following
\begin{theorem} \label{T:intro2} For all the $25$ families except $V_2$ and $B_1$, and all known rational singular moduli of $c(\tau)$ of discriminant $D$,
	the value of $\Re(\mtd(c(\tau)))$ is equal to 
	$$\alpha L'(g_{d},0) + {\beta} L'(\chi_{d'},-1),$$
	for some newform $g_{d}$ of weight $3$ with rational coefficients, some fundamental discriminant $d'<0$, and $\alpha,\beta\in \mb{Q}$.
	Moreover, $-D/d$ is a square and $d' \mid D$.
	The complete lists are given in Section \ref{ss:list} (with $\alpha$ and $\beta$ rescaled).
\end{theorem}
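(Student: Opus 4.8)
The plan is to substitute a singular modulus into Theorem~\ref{T:intro1} and then identify the resulting Eisenstein--Kronecker value as a combination of special $L$-values, carrying out uniformly the computations that Bertin and Samart performed in individual cases~\cite{Be1,Be2,Sa,Sa0}. Fix one of the $23$ families $f_c$ and a known rational singular modulus $c_0=c(\tau_0)$ of (positive) discriminant $D$. Since $c$ is a modular function, $\tau_0$ can be moved into the fundamental domain $\mc F$ without changing $c_0$, so Theorem~\ref{T:intro1} gives
\[
\Re(\mtd(c_0))=\frac{\Img\tau_0}{(2\pi)^3}\sum_{d\mid N}a_d\,d^2\,H_d(\tau_0),
\]
and everything reduces to evaluating each $H_d(\tau_0)$, $d\mid N$, at the CM argument $\tau_0$, which is the special argument considered in Propositions~\ref{P:dHd} and~\ref{P:deH}. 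Write $K=\mb Q(\tau_0)=\mb Q(\sqrt{d_K})$ with $d_K<0$ the fundamental discriminant, so that $D=f^2|d_K|$ where $f$ is the conductor of the order $\mb Z+\mb Z\tau_0\subset K$; for each $d\mid N$ the lattice $\mb Z\cdot d\tau_0+\mb Z$ also has complex multiplication by an order in $K$. Already here $K$ is forced to be the CM field of the newform, and $-D/d_K=f^2$ is a perfect square --- the first part of the ``moreover'' clause, the discriminant $d$ of Theorem~\ref{T:intro2} being $d_K$.

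To evaluate $H_d(\tau_0)$ I would treat its two constituent sums separately. Since $(dm\tau_0+n)^{-2}(dm\br{\tau_0}+n)^{-2}=|dm\tau_0+n|^{-4}$, the sum $\sum_{m,n}'|dm\tau_0+n|^{-4}$ is the value at $s=2$ of the non-holomorphic Eisenstein series $\sum'|dm\tau+n|^{-2s}$; at a CM argument it is an algebraic multiple of a partial Dedekind zeta value of $K$, and --- this is where the specific integers $a_d$ intervene --- the combination $\sum_d a_d d^2(\cdots)$ reduces to an algebraic multiple of $\zeta_K(2)=\zeta(2)\,L(\chi_{d_K},2)$. The other summand, after the elementary identity $\alpha^{-3}\br{\alpha}^{-1}=\br{\alpha}^{2}N(\alpha)^{-3}$ on the relevant lattice, is $2\Re$ of the value at $s=3$ of the Hecke $L$-function of a Hecke character $\psi$ of $K$ of infinity type $2$; the classical evaluation of Eisenstein--Kronecker series at CM points (Damerell's theorem), packaged in Propositions~\ref{P:dHd} and~\ref{P:deH}, rewrites the class-group sum of these as an algebraic multiple of $L(g_{d_K},3)$, where by Proposition~\ref{P:CM3theta} the symmetrization $g_{d_K}:=\theta_\psi+\theta_{\br{\psi}}$ (or a sum of Galois conjugates of a single newform) is a weight-$3$ newform with rational Fourier coefficients and CM by $K$. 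The functional equations of the weight-$3$ newform and of the odd character $\chi_{d_K}$ express $L(g_{d_K},3)$ and $\zeta(2)L(\chi_{d_K},2)$ as rational multiples of powers of $\pi$ times algebraic numbers times $L'(g_{d_K},0)$ and $L'(\chi_{d_K},-1)$ respectively; the prefactor $\Img(\tau_0)/(2\pi)^3$ cancels the powers of $\pi$, and, since $\Img\tau_0$ is a rational multiple of $\sqrt{|D|}=f\sqrt{|d_K|}$ and the level of $g_{d_K}$ is compatible with $D$, the remaining algebraic factors collapse to rational numbers. One thus reaches
\[
\Re(\mtd(c_0))=\alpha\,L'(g_{d_K},0)+\beta\,L'(\chi_{d'},-1)
\]
with $\alpha,\beta\in\mb Q$ (possibly $\beta=0$) and $d'<0$ a fundamental discriminant.

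That $\alpha,\beta$ may be taken rational is, apart from the explicit computation above, forced by the invariance of the whole class-group sum under $\op{Gal}(\br{\mb Q}/\mb Q)$, a consequence of $c_0\in\mb Q$, which leaves room only for an eigenform with rational Fourier coefficients. The level of $g_{d_K}$ and the conductor of the Dirichlet character each divide a bounded power of $|d_K|f$, so $d'$ divides $D$ --- generically $d'=d_K$, though a quadratic twist can occur --- completing the ``moreover'' clause. It then remains to run this computation for each of the $23$ families and each known rational singular modulus of $c$ (finitely many, being the CM arguments at which the moonshine Hauptmodul $c$ takes a rational value), to read off $\psi$ and hence $g_{d_K}$ --- its identity among the finitely many newforms of the allowed level confirmed by comparing a few Hecke eigenvalues --- along with the pair $(\alpha,\beta)$, and to record these, after rescaling to clear denominators, in the lists of Section~\ref{ss:list}; a high-precision numerical evaluation of $\Re(\mtd(c_0))$ against the right-hand side certifies each entry.

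The crux is the uniform CM evaluation underlying Propositions~\ref{P:dHd}, \ref{P:deH} and~\ref{P:CM3theta}: $H_d$ is a bespoke mixed-weight combination rather than a standard Eisenstein series, and one must show that at CM points it collapses cleanly to a combination of the values $L(\theta_\psi,3)$ plus a single Dedekind-zeta term, with all transcendental period factors arranging themselves --- after the prefactor $\Img(\tau_0)/(2\pi)^3$ --- into rational coefficients and with the conductors satisfying ``$-D/d$ is a square'' and ``$d'\mid D$''. This is a Damerell-type computation combined with Hecke's theta construction, but the rationality of the coefficients (equivalently, the precise matching of the level of $g_{d}$ with $D$) and the appearance of exactly one Dirichlet character make it genuinely delicate. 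Two further points need attention: the seven families of generic Picard rank $<19$, and any family whose Picard--Fuchs operator is not of the simplest hypergeometric shape, require the modular-parametrization input of Theorem~\ref{T:intro1} and the class-group bookkeeping to be handled case by case rather than by a single formula; and, while not needed for the statement about $\Re(\mtd(c_0))$ itself, one should also confirm that each $\tau_0$ lies outside the bounded exceptional region of Theorem~\ref{T:intro1} (or else continue $\mtd$ analytically past it) before reading the resulting identities as statements about Mahler measures.
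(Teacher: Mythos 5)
Your strategy is the paper's: specialize Theorem \ref{T:intro1} at the CM point, split each $H_d(\tau_Q)$ into a weight-$2$ Hecke $L$-value at $s=3$ and an Epstein zeta value at $s=2$, assemble the $d\mid N$ sum into a full Hecke $L$-function of a rational CM newform plus a Dirichlet piece, and convert via functional equations. But two of your assertions sit exactly where the arithmetic content of the proof lives, and one of them is false as stated. You claim the weight-$0$ constituent of $\sum_d a_d d^2 H_d(\tau_0)$ collapses to an algebraic multiple of $\zeta_K(2)=\zeta(2)L(\chi_{d_K},2)$. What actually appears is $L(f_{\phi,1},2)$ for the \emph{genus character} $\phi$ on $Cl(D)$ cut out by the signs $a_d$, which by the Dirichlet--Kronecker factorization (Theorem \ref{T:genusch}) equals $\omega(D)\,L(\chi_{d_1},2)L(\chi_{d_2},2)$ for a decomposition $D=d_1d_2$ into two discriminants determined by $\phi$; generically $d_1\neq 1$ and $d_2\neq d_K$ (e.g.\ the $V_4$ entry at $D=-148$ yields $l_{37,-4}$, not $l_{1,-148}$ as your formula would predict). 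Your identification corresponds to the trivial genus character only, and it is precisely this factorization that produces the theorem's ``$d'\mid D$'' rather than ``$d'=d_K$''.

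The larger gap is that you assert, rather than establish, that the sum over $d\mid N$ ``is'' a class-group sum. The paper's mechanism (Proposition \ref{P:dHd} and Corollary \ref{C:dHd}) is that for $\tau=\tau_Q$ with $Q=[a,ka,c]$ and $d\mid a$ one has $d^2H_d(\tau_Q)=a^2\bigl(4L(\Theta_{Q_{/d}},3)+\zeta_{Q_{/d}}(2)\bigr)$ with $Q_{/d}=[a/d,b,cd]$ of the \emph{same} discriminant $D$; one must then verify, family by family and modulus by modulus, that the forms $Q_{/d}$ for the $d\mid N$ with $a_d\neq 0$ constitute a complete set of representatives of $Cl(D)$ and that the signs $a_d$ coincide with the values of a genus character on those classes (with the variant Proposition \ref{P:deH} for the shape $a=cde$, and Proposition \ref{P:tonew} when the resulting theta combination is an oldform --- which is also where ``$-D/d$ is a square'' with $d\neq |D|$ comes from). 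This coincidence between the divisor structure of the level $N$ and the genus theory of $Cl(D)$ is the crux; without checking it you cannot conclude that the weight-$2$ piece is the $L$-value of a \emph{single} rational newform rather than an inhomogeneous mixture of partial Hecke series. The remaining ingredients of your write-up (the Damerell-type evaluation, rationality, the functional-equation rescaling, and the caveat about the fundamental domain) do match the paper's proof of Theorem \ref{T:mmsv}.
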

\noindent The full lists contains a total of $179$ identities, which include the $7$ proved in \cite{Sa} and the $17$ conjectured in \cite{Sa0}.
The $9$ proved in \cite{Be1,Be2,BFFLM} can be derived from them as well. 
In fact, our methods could settle all the conjectured identities in \cite{Sa0}, including those involving quadratic singular moduli.
However, we do not include them because otherwise the full lists would be too long.

M. Rogers proved in \cite{Ro} two interesting relations on Mahler measures, linking the two families studied by Bertin to the hypergeometric families.
By finding the relations (modular equations) among Hauptmoduln for different genus-$0$ groups, we produce $10$ more similar relations.
\begin{theorem}	\label{T:intro3} We have 10 exotic relations on Mahler measures as listed in Theorem \ref{T:rel}.
\end{theorem}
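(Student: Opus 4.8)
The plan is to derive each of the ten relations from a \emph{modular equation} between the Hauptmoduln of the families involved, and then to feed both sides through the uniform formula of Theorem \ref{T:intro1}. Concretely, for each pair (or triple) of families I would first read off from the left columns of Table \ref{tab:forms} the genus-$0$ groups $\Gamma_1,\Gamma_2,\dots$ of moonshine type that parametrize the pencils, together with their Hauptmoduln $c_1(\tau),c_2(\tau),\dots$. The ten relations correspond to the commensurabilities among these groups that are \emph{not} already exploited by Rogers \cite{Ro}: typically $\Gamma_1$ and the conjugate $m^{-1}\Gamma_2 m$ (by $\tau\mapsto m\tau$) share a finite-index subgroup, so $c_1(\tau)$ and $c_2(m\tau)$ lie in a common function field. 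Eliminating between them using $q$-expansions and linear algebra (or a resultant) yields the algebraic relation $\Phi\!\left(c_1(\tau),c_2(m\tau)\right)=0$, which in the favorable cases is rational, $c_1(\tau)=R\!\left(c_2(m\tau)\right)$; this is exactly the comparison of the two pencils $f_1-c_1$ and $f_2-c_2$ that we need.

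The next step is bookkeeping with the Eisenstein--Kronecker series. The essential identity is the scaling
\[
H_d(m\tau)=H_{dm}(\tau),\qquad \Img(m\tau)=m\,\Img\tau,
\]
so that the formula of Theorem \ref{T:intro1} for $\Re\!\left(\mtd(c_2(m\tau))\right)$ becomes a rational linear combination of the \emph{same} building blocks $H_e(\tau)$ appearing in the formula for $\Re\!\left(\mtd(c_1(\tau))\right)$ (the multiplier $1/m$ is the source of the nonintegral coefficients). Matching the coefficient of each $H_e(\tau)$ is then a finite linear-algebra check, and it produces an identity
\[
\Re\!\left(\mtd(c_1(\tau))\right)=\textstyle\sum_j \lambda_j\,\Re\!\left(\mtd(c_j(m_j\tau))\right),\qquad \lambda_j\in\mb{Q},
\]
valid for $\tau$ in a common fundamental domain. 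Since both sides are real-analytic in $\tau$, no further analytic continuation is required once the coefficients match; the identity of functions is established.

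Finally I would convert this identity of functions on the $\tau$-line into the stated relation among honest Mahler measures. The step that needs the most care is the bounded open region where $\Re\!\left(\mtd(c(\tau))\right)$ fails to agree with $m(f_c-c)$: I would choose the specialization — a purely imaginary $\tau$, or one of the rational singular moduli listed in Section \ref{ss:list} — so that every parameter value $c_i$ produced lies outside its exceptional region, in which case the function identity specializes directly to the claimed relation on the $m(f_i-c_i)$; if a parameter does fall inside, I would instead carry along the explicit correction term built into $\mtd$ on both sides and verify it cancels. Running this through the ten relevant commensurabilities gives the list in Theorem \ref{T:rel}. The main obstacle is entirely computational: producing the correct modular equations and, above all, identifying the right integer multipliers $m_j$ so that the $H_e(\tau)$ on the two sides genuinely coincide; an incorrect $m_j$ gives mismatched lattices and the matching simply fails, so this is where the real content lies.
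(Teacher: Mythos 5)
Your proposal is correct and follows essentially the same route as the paper: modular equations between the Hauptmoduln (Lemma \ref{L:modeq}) are combined with rational linear relations among the functions $\wtd{m}_i$ obtained by matching the Eisenstein-series building blocks (Corollary \ref{C:relation}), and the fundamental-domain/region-$\mc{K}$ caveat is handled by keeping the parameter large so that every $c_i$ stays outside the compact set $\mc{K}$. The only cosmetic difference is that the paper derives the $\wtd{m}$-relations by comparing the $q$-expansions of the weight-$4$ Eisenstein series $e(\tau)$ via Theorem \ref{T:mmm}, which is equivalent to your scaling identity $H_d(m\tau)=H_{dm}(\tau)$ applied to the lattice-sum formula.
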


\subsection*{Organization} Section \ref{S:families} serves as an extended introduction to explain the origin of the $25$ families of Laurent polynomials.
In Section \ref{S:MM} we recall the relation between quantum periods and Mahler measures, and review Villegas's idea in our setting.
In Section \ref{ss:Eisen} we prove our first main result -- Theorem \ref{T:mmmLG} (Mahler measures in Eisenstein-Kronecker series).
In Section \ref{S:thetaHecke}, we start by reviewing some basic material on Hecke characters for imaginary quadratic fields. Then we tailor Sch\"{u}tt's work on CM newforms with rational coefficients to our setting (Corollary \ref{C:rf3}).
In Section \ref{ss:theta} we link the relevant Hecke $L$-functions to the theta functions in Proposition \ref{P:CM3theta}.
In Section \ref{S:SM}, we conjecture a level-$N$ generalization of the Hilbert class polynomials, and explain how we reduce the search for rational singular moduli of the modular function $c(\tau)$ to reasonably small size.
In Section \ref{S:Lfun} we prove our second main result -- Theorem \ref{T:mmsv} (the $179$ identities).
In Section \ref{S:rel} we prove our last main result -- Theorem \ref{T:rel} (the $10$ exotic relations).

\subsection*{Notations and Conventions}
In this paper, $q$ is always set to be $e^{2\pi i \tau}$ for $\tau\in\mb{C}$.
The $D_x$ ($x=q,t$, etc) always denotes the differential operator $x\frac{d}{dx}$.
The bold $\b{x}$ denotes a set of variables $(x_1,x_2,\dots, x_n)$.
We write the $\eta$-quotient $\prod_i \eta(a_i\tau)^{d_i}$ in exponential form $ \prod_i a_i^{d_i}$ ($a_i, d_i\in \mb{Z}$).

\section{The 25 Families of $K3$ Hypersurfaces} \label{S:families}
\subsection{Periods of Laurent Polynomials}
Given a Laurent polynomial $f \in\mb{C}[x_1^{\pm 1},\dots,x_n^{\pm 1}]$, one can form the {quantum period} of $f$.
\begin{definition} The {\em quantum periods} of $f$ is the following integral over the $n$-dimensional torus $\mb{T}_n: |x_1|=\cdots =|x_n|=1$
	$$\pi_f(t) = \frac{1}{(2\pi i)^n} \int_{\mb{T}^n} \left(1-tf(x_1,\dots,x_n)\right)^{-1} \frac{dx_1}{x_1}\cdots \frac{dx_n}{x_n}.$$
\end{definition}
\noindent It is a (possibly multivalued) holomorphic function of $t$, and is annihilated by a {\em Picard-Fuchs operator} 
$$L_f = \sum_{i=0}^k t^i P_i(D) \in \mb{C}[t,D], \quad  D=t\frac{d}{dt}.$$

The {\em $G$-series} for a Fano variety $X$ is a generating function for certain genus-zero Gromov-Witten invariants of $X$. Since it is irrelevant to the main results of this paper, we will omit its precise definition. 
One {\em mirror conjecture} states that the Laplace transform of $G$-series for $X$ is the solution of Picard-Fuchs differential equation for some pencil of Calabi-Yau varieties that is called the {\em Landau-Ginzburg model} mirror dual to $X$.
In its most basic form, the Picard-Fuchs equation is given by the above $L_f$ for some Laurent polynomial $f$, and
the hypersurfaces defined by $f=c$ can be compactified to the required pencil of Calabi-Yau varieties.
In this case, the Laurent polynomial $f$ is called a {\em weak Landau-Ginzburg potential}.

If the Laurent polynomial $f$ is mirror to a Fano variety $X$, then one expects that $X$ can be constructed from certain smoothing of $X_f$ where $X_f$ is the toric variety associated to the Newton polytope of $f$ (Batyrev's construction \cite{Ba}).
Since a Fano variety can degenerate to many different singular toric varieties,
one might expect many Laurent polynomial mirrors for a given Fano $X$.

In \cite {ACGK} a class of Laurent polynomials called {\em Minkowski polynomials} were constructed as mirror partners to many  Fano $3$-folds.
They defined birational transformations, called {\em mutations}, that preserve periods. They showed that any two Minkowski polynomials with the same period are related by a sequence of mutations. We will give a short introduction on that in Section \ref{ss:mu}.

\begin{remark} The quantum periods can be computed by applying the residue theorem $n$ times:
	$$\pi_f(t) = \sum_{m=0}^\infty t^m \frac{1}{(2\pi i)^n} \int_{\mb{T}^n} f^m \frac{dx_1}{x_1}\cdots \frac{dx_n}{x_n}:= \sum_{m=0}^\infty b_m t^m.$$
So $b_m$ is the constant coefficient of the Laurent polynomial $f^m$.

The existence of the Picard-Fuchs operator $L_f$ is equivalent to the recurrence relation
$$\sum_{i=0}^k P_i(m-i) b_{m-i}=0 \quad \text{for any } m\in \mb{N}_0,$$
which can be guessed from first few terms of $\pi_f(t)$.
\end{remark}

\subsection{Modular Picard-Fuchs Equations}
In this subsection, we briefly recall the work of V. Golyshev and others.
	
We recall that a {\em Fano $n$-fold} $X$ is by definition a smooth $n$-dimensional complex variety with ample anticanonical divisor.
In dimension 3, according to Mori-Mukai's classification, there are exactly 105 Fano varieties up to deformation. Prior to Mukai, Iskovskikh had classified those of Picard rank $1$: there are exactly $17$ families among those $105$. The relevant invariants for the classification are 
$$\text{the index } d=[H^2(X,\mb{Z}): \mb{Z}c_1] \text{ and the level } N=\frac{1}{2d^2}\innerprod{c_1^3,[X]},$$
where $c_1$ is the anticanonical divisor. 
We will label those of index $1$ by $V_{N}$ ($N=1,\dots,9,11$), and those of index $2$ by $B_{N}$ ($N=1,\dots,5$).
The remaining two are a smooth quadratic $Q\subset \mb{P}^4$ and $\mb{P}^3$.

If $X$ is a Fano $n$-fold, then the adjunction formula implies that its anticanonical divisors are {\em Calabi-Yau}. 
In the case $n=3$, this will be a family of $K3$ surfaces of Picard rank $20-\rho$, where $\rho$ is the Picard rank of $X$, which can range from $1$ to $6$.
We recall that for a general $K3$ surface $Y$ the second homology group $H_2(Y,\mb{Z})$ is free of rank $22$.
As a free subgroup, $\op{Pic}(Y)$ can have rank ranging from $1$ to $20$.
If $\rho(Y)=20$ then we say that the $K3$-surface is {\em singular}.
If a one-parameter family $Y_t$ of $K3$ surfaces with generic Picard rank $\rho$, then one expect that the associated Picard-Fuchs equation has order $22-\rho$.
This is mostly due to the fact that $\int_{\gamma}\omega  =0$ for any $\gamma \in \op{Pic}(Y_t)$ (see \cite[Proposition 5.2]{VY} for details).
Here, $\omega$ is the unique (up to scalar multiplication) holomorphic differential 2-form on $Y_t$.

Golyshev constructed in \cite{Go} a specific collection of $17$ pencils of $K3$ surfaces mirror to the $17$ smooth Fano $3$-folds of Picard rank $1$.
In particular, he described the corresponding Picard-Fuchs equations and their modular properties (see Section \ref{ss:MMM}). 
He found that all of them are of type $D3$, which is a specific class of determinantal linear differential equations of order $3$.
They can be written as the following form
\begin{align}\label{eq:D3} D^3 + t(D+\frac{1}{2})(\alpha_3(D^2+D)+\beta_1) +t^2(D+1)(\alpha_2(D+1)^2+\beta_0) &+ \alpha_1t^3(D+2)(D+\frac{3}{2})(D+1) \notag \\
&+\alpha_0 t^4(D + 3)(D + 2)(D + 1). \end{align}
The corresponding Laurent polynomials for the above $17$ families were given by V. Przyjalkowski in \cite[Table 1]{Pr}. 
We slightly modify them by mutations and shifts to our preferred form and list them together with their Picard-Fuchs equations in the upper part of Table \ref{tab:Lp} and Table \ref{tab:Lp2}.
\footnote{The Laurent polynomials of $V_{24}$ and $B_{6b}$ are what Bertin labelled by $Q$ and $P$ \cite{Be1,Be2}, and those of $V_2, V_4, V_6, V_8$ are equivalent to what Samart labelled by $D,C,B,A$ in \cite{Sa,Sa0}. The recurrence relation of the period sequence of $V_{12}$ is the same as the one used in Ap\'{e}ry's proof of the irrationality of $\zeta(3)$ \cite{BP}.}

However, S. Galkin found $8$ more Fano $3$-folds with Picard rank $>1$ satisfying $D3$ equations, and verified their modular properties in \cite{Ga}.
Y. Prokhorov and he observed that for some complex structure they admit a finite group action $G\curvearrowright X$ such that $\op{Pic}^G(X) = \mb{Z} c_1(X)$.
We list their mirror Laurent polynomials in the lower part of Table \ref{tab:Lp} and Table \ref{tab:Lp2}.
We record their Picard ranks here:
$$\rho(V_{12a})=2,\ \rho(V_{12b})=3,\ \rho(V_{20})=2,\ \rho(V_{24})=4,\ \rho(V_{28})=2,\ \rho(V_{30})=3,\ \rho(B_{6a})=2,\ \rho(B_{6b})=3.$$

The corresponding results for the remaining $80$ Fano $3$-folds of the Mori-Mukai classification were conjectured by T. Coates, A. Corti, S. Galkin and A. Kasprzyk together with Golyshev and were proved in all cases as an application of their Fanosearch Program \cite{CCGK}. Their work gives new explicit descriptions of the Fano varieties as well as a Laurent polynomial defining the mirror family in every case.
However, all their Picard-Fuchs equations have order greater than $3$.

%
\renewcommand{\arraystretch}{1.5} 
\begin{table}
		\caption{\label{tab:Lp} List of Laurent polynomials and Picard-Fuchs equations for $d=1$}
	$\begin{array}{|c|c|c|} \hline
	\text{label}	& f & -(\alpha_3,\alpha_2,\alpha_1;\beta_1,\beta_0)\\ \hline 
	V_2	& (x_1+x_2+x_3+ 1)^6/(x_1x_2x_3)  & 48(36,0,0;5,0)  \\ \hline
	V_4 & (x_1+x_2+x_3+ 1)^4/(x_1x_2x_3) & 16(16,0,0;3,0) \\ \hline 
	V_6	& (x_1+1)^2(x_2+x_3+ 1)^3/(x_1x_2x_3) & 12(9,0,0;2,0)\\ \hline
	V_8	& (x_1 + 1)^2(x_2 + 1)^2(x_3 + 1)^2/(x_1x_2x_3)  & 16(4,0,0;1,0) \\ \hline
	V_{10}& (x_1 + 1)(x_2 + 1)^2(x_3 + 1)(x_1 + x_3 + 1)/(x_1x_2x_3) & 4(11,4,0;3,-1) \\ \hline
	V_{12}& (x_1 + 1)(x_3 + 1)(x_2 + x_1x_2 + 1)(x_3 + x_2x_3 + 1)/(x_1x_2x_3) & (34,-1,0; 10,0)\\ \hline
	V_{14}& (x_1 + x_1x_2 + 1)(x_2 + x_2x_3 + 1)(x_3 + x_1x_3 + 1)/(x_1x_2x_3) & (26,27,3; 8,0) \\ \hline
	V_{16}&  (x_1 + 1)(x_2 + x_3 + x_2x_3)(x_1 + x_3 + x_1x_2 + 1)/(x_1x_2x_3) & (24,-16,0; 8,0) \\ \hline	
	V_{18}&  (x_1 + 1)(x_3 + 1)(x_2 + x_2x_3 + x_1x_2^2x_3 + x_1x_2x_3 + 1)/(x_1x_2x_3) & (18,27,0; 6,0)\\ \hline
	V_{22}& (x_1 + x_2 + x_1x_2 + x_2x_3)(x_1 + x_3 + x_1x_3 + x_2x_3)/(x_1x_2x_3)+1/(x_2x_3)  &  (20,-56,44; 8,-8)\\ \hline \hline
	V_{12a}	& (x_1 + x_2)(x_1 + 1)(x_2 + 1)(x_3 + 1)^2/(x_1x_2x_3) & (28,128,0; 8,-32) \\ \hline 
	V_{12b}	& (x_1 + x_1x_2 + 1)(x_2 + x_1x_2 + 1)(x_3 + 1)^2/(x_1x_2x_3)  & (40,0,144;12,0) \\ \hline 
	V_{20} 	& (x_1x_2x_3 + 1)(x_1^{-1} + 1)(x_2^{-1} + 1)(x_3^{-1} + 1)  &  (12,-144,0; 4,36) \\ \hline 
	V_{24}	& (x_1 + x_2 + x_3 + 1)(x_1^{-1} + x_2^{-1} + x_3^{-1} + 1) & (20,-64,0; 8,0) \\ \hline 
	V_{28}	& x_1 + 2x_2 + x_3 + (x_1x_2)^{-1} + x_2x_3 + x_1x_2^{-1} + x_2x_3^{-1} + 2x_2^{-1} + 1 &  (6,47,28;2,4) \\ \hline 
	V_{30}	& x_1 + x_2 + x_3 + x_1^{-1} + x_2^{-1} + x_3^{-1} + x_1x_2^{-1} + x_2x_3^{-1} + x_3x_1^{-1} + 3 & (14,-29,60; 6,-4) \\ \hline 
	\end{array}$	
	
	{\raggedright \hspace{1.2cm} The right column lists the coefficients of the $D3$ equation \eqref{eq:D3}. All of them have $\alpha_0=0$. \par}
\end{table}


\subsection{Mutation Invariance} \label{ss:mu}
Readers can safely skip this subsection to reach the main results of this article.
Consider a Laurent polynomial $f=\sum_{i=k}^l C_i(x_1,x_2) x_3^i$ ($k<0$ and $l>0$).
A monomial change of variables 
$$(x_1,x_2,x_3) \mapsto (x_1^{a_{11}}x_2^{a_{12}}x_3^{a_{13}},x_1^{a_{21}}x_2^{a_{22}}x_3^{a_{23}},x_1^{a_{31}}x_2^{a_{32}}x_3^{a_{33}} )$$
is called a $\GL_3(\mb{Z})$-equivalence if the integral matrix $(a_{ij})$ is invertible.

Suppose that each $C_i$ is a Laurent polynomial in $x_1$ and $x_2$ such that $A(x_1,x_2)^i C_i(x_1,x_2)$ remains Laurent.
Then the pullback of $f$ along the birational transformation $(\mb{C}^\times)^3 \dashrightarrow (\mb{C}^\times)^3$ given by
\begin{equation}\label{eq:mu} (x_1,x_2,x_3) \mapsto (x_1,x_2,A(x_1,x_2)x_3) \end{equation}
is another Laurent polynomial
$$g = \sum_{i=k}^l A(x_1,x_2)^i C_i(x_1,x_2)x_3^i.$$

\begin{definition}[{\cite{ACGK}}] A {\em mutation} is a birational transformation $(\mb{C}^\times)^3 \dashrightarrow (\mb{C}^\times)^3$ given by a composition of:
	\begin{enumerate}
		\item a $\GL_3(\mb{Z})$-equivalence;
		\item a birational transformation of the form \eqref{eq:mu};
		\item another $\GL_3(\mb{Z})$-equivalence.
	\end{enumerate}
	If $f$ and $g$ are Laurent polynomials and $\varphi$ is a mutation such that $\varphi^* f =g$ 
	then we say that $f$ and $g$ are related by the mutation $\varphi$.
\end{definition}

\begin{lemma}[{\cite{ACGK}}] \label{L:muinv} If $f$ and $g$ are related by a mutation, then the quantum periods of $f$ and $g$ coincide.
\end{lemma}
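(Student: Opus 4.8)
The plan is to reduce the statement to the three constituent pieces of a mutation and verify period-invariance for each. First I would observe that the quantum period $\pi_f(t)$ is an integral over $\mb{T}^n$ of a function depending on $f$ only through its values on the torus, weighted by the invariant measure $\tfrac{dx_1}{x_1}\cdots\tfrac{dx_n}{x_n}$; equivalently, by the residue computation in the Remark, $\pi_f(t)=\sum_m b_m t^m$ where $b_m$ is the constant term of $f^m$. So it suffices to show that a mutation preserves the constant term of every power, or directly that it preserves the torus integral $\tfrac{1}{(2\pi i)^n}\int_{\mb{T}^n}(1-tf)^{-1}\tfrac{d\b{x}}{\b{x}}$.

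For a $\GL_n(\mb{Z})$-equivalence $\varphi$, the key point is that the monomial map $\b{x}\mapsto(\b{x}^{\b{a}_1},\dots,\b{x}^{\b{a}_n})$ with $(a_{ij})\in\GL_n(\mb{Z})$ is an automorphism of the algebraic torus $(\mb{C}^\times)^n$ that carries $\mb{T}^n$ to itself (as $|\b{x}^{\b{a}_i}|=1$ when $|x_j|=1$) and pulls back the Haar form $\tfrac{d\b{x}}{\b{x}}$ to $\pm$ itself, the sign being $\det(a_{ij})=\pm1$. Since the integrand $(1-t\varphi^*f)^{-1}$ is the pullback of $(1-tf)^{-1}$ and the (oriented) torus is taken to itself, the change-of-variables formula gives equality of the integrals, hence of quantum periods.

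The substantive case is the transformation \eqref{eq:mu}, $(x_1,x_2,x_3)\mapsto(x_1,x_2,A(x_1,x_2)x_3)$. Here I would work power-by-power: the constant term of $g^m$ in all three variables equals the constant term in $x_1,x_2$ of the $x_3^0$-coefficient of $g^m=\big(\sum_i A^iC_ix_3^i\big)^m$. Expanding the $m$-th power, the $x_3^0$-part is $\sum A^{i_1+\cdots+i_m}C_{i_1}\cdots C_{i_m}$ over tuples with $i_1+\cdots+i_m=0$, and for each such tuple the total power of $A$ is exactly the total power of $x_3$ that was killed, so the $A$-factors assemble into $A^0=1$ and the expression coincides termwise with the $x_3^0$-coefficient of $f^m$. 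Taking constant terms in $x_1,x_2$ then gives $b_m(g)=b_m(f)$ for all $m$, and hence $\pi_g(t)=\pi_f(t)$; composing the three steps proves the lemma. The main obstacle is purely bookkeeping: making the termwise identification in the middle step precise requires knowing that each $A^iC_i$ is genuinely Laurent (the hypothesis on $A$) so that no spurious cancellation or convergence issue arises, and one should phrase the argument at the level of formal power series in $t$ so that the interchange of summation and constant-term extraction is automatic.
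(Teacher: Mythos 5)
Your argument is correct. The paper does not actually prove this lemma---it is quoted directly from \cite{ACGK}---so there is no in-paper proof to compare against; your reduction to the three constituent steps, the exponent-lattice-automorphism argument for the $\GL_3(\mb{Z})$-equivalences, and the constant-term bookkeeping $\prod_j A^{i_j}=A^{i_1+\cdots+i_m}=A^0=1$ for the transformation \eqref{eq:mu} constitute the standard proof, essentially the one given in the cited reference. Working with constant terms of $f^m$ rather than with the torus integral itself is the right move for step \eqref{eq:mu}, since that birational map does not preserve $\mb{T}^3$ and a direct change of variables in the integral would require a contour deformation; your hypothesis-check that each $A^iC_i$ is Laurent is exactly what makes the termwise identification of $x_3^0$-coefficients legitimate.
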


\begin{example} Consider the following Laurent polynomials \begin{align*}
	f_0&=(1 + x_1)(1 + x_2)(1 + x_1 + x_3)(1 + x_2x_3 +x_3)/(x_1x_2x_3),\\
	f_1&=(1 + x_1)(1 + x_2)(1 + x_3)(1 + x_1 + x_2 + x_3 + x_1x_2)/(x_1x_2x_3), \\
	f_2&=(1 + x_1)(1 + x_2)(1 + x_1 + x_3)(1 + x_1 + x_2 + x_3)/(x_1x_2x_3).
	\end{align*}
	Then $f_0$ can be mutated to $f_1$ by the transformation 
	$$(x_1,x_2,x_3) \mapsto \Big(x_1,x_2,\frac{1+x_1}{x_3} \Big)$$
	and $f_1$ can be mutated to $f_2$ by the $\GL_3(\mb{Z})$-equivalence $(x_1,x_2,x_3) \mapsto (x_1,x_3^{-1},x_2)$
	followed by the transformation 
	$$(x_1,x_2,x_3) \mapsto \Big(x_1,x_2,\frac{1+x_1}{x_3} \Big).$$	
	According to the appendix of \cite{ACGK}, the mutation-equivalent class of $f_0$ contains at least $71$ $\GL_3(\mb{Z})$-nonequivalent Laurent polynomials supporting on reflexive polytopes.
\end{example}

\section{Mahler Measure} \label{S:MM}
\subsection{Mahler Measure and Periods}  \label{ss:MMP}

Let $f$ be a Laurent polynomial in $n$ variables. 

\begin{definition} The (logarithmic) {\em Mahler measure} of $f$ is the arithmetic average of $\log|f|$ over the $n$-dimensional torus $\mb{T}^n$:
	\begin{align*} m(f) &:= \frac{1}{(2\pi i)^n} \int_{\mb{T}^n} \log \left|f(x_1,\dots,x_n)\right| \frac{dx_1}{x_1}\cdots \frac{dx_n}{x_n}\\
	& = \int_0^1\cdots \int_0^1 \log \left|f(e^{2\pi i\theta_1},\dots,e^{2\pi i\theta_n}) \right| d\theta_1\cdots d\theta_n.
	\end{align*}
\end{definition}
\noindent It is a nontrivial fact \cite[Proposition 3.1]{BZ} that the integral defining $m(f)$ always converges.

From now on we will denote by $\{f_c\}$ the family of Laurent polynomials $f-c$ for $c\in\mb{C}$,
and by $f_c$ the particular member $f-c$.
One of the ideas in \cite{Vi} is to study $m(f_c)$ as a function of the complex parameter $c$.
Sometimes it is more convenient to work with the parameter $t=1/c$.

Let $\mc{K}$ be compact region given by the image of the torus $\mb{T}^n$ under the map $\b{x} \mapsto f(\b{x})$.
Then $1-t f$ does not vanish on $\mb{T}^n$ for $t^{-1} \notin \mc{K}$.
For $t^{-1} \in \mb{C} \setminus \mc{K}$, we define the holomorphic function
\begin{align} \mtd(t) &:= -\log(t) + \frac{1}{(2\pi i)^n} \int_{\mb{T}^n} \log (1- tf) \frac{dx_1}{x_1}\cdots \frac{dx_n}{x_n} \notag\\
& = -\log(t) - \sum_{n=1}^\infty \frac{b_n}{n} t^n.  \label{eq:mb} 
\end{align}  
Here and throughout we take the principal branch of the logarithm.

\begin{lemma}[{\cite{Vi}}] \label{L:mtd} We have that $m(f_t)=\Re(\wtd{m}(t))$ for $1/t=c \in \mb{C} \setminus \mc{K}^\circ$, where $\mc{K}^\circ$ is the interior of $\mc{K}$.
\end{lemma}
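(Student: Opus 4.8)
The plan is to prove Lemma \ref{L:mtd} by comparing the two holomorphic-in-$t$ quantities $m(f_t)$ and $\Re(\mtd(t))$ on the region $1/t = c \in \mb{C}\setminus\mc{K}$ (where everything is manifestly well behaved), and then extending to the boundary-minus-interior by a continuity/analyticity argument. First I would fix $t$ with $t^{-1}\notin\mc{K}$ and write $f_t = 1 - tf$ composed with the obvious rescaling: since $\log|f_c| = \log|{-c}| + \log|1 - f/c| = \log|1/t| + \log|1 - tf|$ pointwise on $\mb{T}^n$ (taking $c = 1/t$ and noting $f_c = f - c = -c(1 - tf)$), integrating over $\mb{T}^n$ against the Haar measure $\frac{1}{(2\pi i)^n}\frac{dx_1}{x_1}\cdots\frac{dx_n}{x_n}$ gives
\[
 m(f_t) = \log|1/t| + \frac{1}{(2\pi i)^n}\int_{\mb{T}^n}\log|1 - tf|\,\frac{dx_1}{x_1}\cdots\frac{dx_n}{x_n}.
\]
On the other hand, $\Re(\mtd(t)) = -\Re(\log t) + \frac{1}{(2\pi i)^n}\int_{\mb{T}^n}\Re\log(1 - tf)\,\frac{dx_1}{x_1}\cdots\frac{dx_n}{x_n}$, and $-\Re(\log t) = \log|1/t|$ while $\Re\log(1-tf) = \log|1-tf|$ since we use the principal branch. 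So the two expressions agree term by term, giving $m(f_t) = \Re(\mtd(t))$ for all $t$ with $1/t \notin \mc{K}$.

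The point that needs care is the exchange of $\Re$ with the integral and the legitimacy of the pointwise identity $\log|f_c| = \log|1/t| + \log|1-tf|$ once $1/t$ is allowed to sit on the boundary $\partial\mc{K}$. For $1/t \in \mb{C}\setminus\mc{K}$ the function $1 - tf$ is bounded away from $0$ on the compact torus $\mb{T}^n$, so $\log(1-tf)$ is a bona fide continuous (indeed real-analytic in the torus coordinates) function, integration and $\Re$ commute by Fubini, and there is nothing subtle. The remaining claim is that the identity persists for $1/t \in \partial\mc{K} \setminus \mc{K}^\circ$, i.e.\ on the boundary points that are not interior. Here I would invoke that $m(f_t)$ extends continuously in $t$ across such points — this is essentially the convergence statement cited after the definition of Mahler measure (\cite[Proposition 3.1]{BZ}), applied to the family, together with the fact that the possible singularities of $\log|1-tf|$ on $\mb{T}^n$ for $t^{-1}\in\partial\mc{K}$ are integrable — and that $\mtd(t)$, being given by a power series in $t$ with radius of convergence at least $1/\max_{\mb{T}^n}|f|$, is holomorphic on a neighborhood that includes $\mb{C}\setminus\mc{K}^\circ$ minus at worst the point $t=0$; by continuity of both sides the equality $m(f_t) = \Re(\mtd(t))$ propagates from the open region to its closure intersected with $\mb{C}\setminus\mc{K}^\circ$.

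The main obstacle I anticipate is precisely this boundary case: on $\partial\mc{K}$ the integrand $\log|1 - tf|$ may blow up where $f(\b{x})$ hits $1/t$, so one must check that the singularity is mild enough (logarithmic, hence locally integrable in any dimension) for both the integral defining $m(f_t)$ and its identification with $\Re(\mtd(t))$ to remain valid, and that the limiting value from the exterior agrees. If one prefers to avoid delicate boundary analysis, an alternative is to simply restrict the statement to $1/t \in \mb{C}\setminus\mc{K}$ and then remark that both sides are continuous up to $\partial\mc{K}\setminus\mc{K}^\circ$; since the lemma is quoted from \cite{Vi}, citing that source for the boundary refinement is legitimate, and the essential content — the termwise identity above — is the short computation I would present in full.
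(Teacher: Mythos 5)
The paper does not actually prove this lemma --- it is quoted from \cite{Vi} --- but your central computation is exactly the standard argument there: write $f-c=-c(1-tf)$, take $\log|\cdot|$, integrate against the (real, normalized) Haar measure, and use that $\Re$ commutes with the integral and that $\Re\log(1-tf)=\log|1-tf|$. That part is correct and complete for $c\notin\mc{K}$.

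Two small repairs. First, your claim that the power series $-\log t-\sum_n b_nt^n/n$ is ``holomorphic on a neighborhood that includes $\mb{C}\setminus\mc{K}^\circ$'' is false: since $b_n$ is the constant term of $f^n$, the series only converges for $|t|<1/\max_{\mb{T}^n}|f|$, i.e.\ for $|c|$ large, whereas $\mb{C}\setminus\mc{K}^\circ$ contains points $c$ with $|c|\le\max_{\mb{T}^n}|f|$. This is harmless because the integral representation of $\mtd$, not the series, is what you should use at such points. Second, for $c\in\partial\mc{K}$ (note this is the \emph{generic} case for the reciprocal families, where $\mc{K}^\circ=\emptyset$ and the lemma is asserted for all $c$) you do not need the continuity-in-$c$ of the Mahler measure, which is itself a nontrivial fact in several variables: the zero locus of $f-c$ on $\mb{T}^n$ is a proper real-analytic subset, hence of measure zero, the pointwise identity $\log|f-c|=\log|1/t|+\log|1-tf|$ holds off that set, and both $\log|f-c|$ and $\log|1-tf|$ are integrable by \cite[Proposition 3.1]{BZ}; so the identity integrates directly, with no limiting argument. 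With those adjustments your proof is the intended one.
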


\begin{caution} In general,  $m(f_t)$ may not agree with $\Re(\wtd{m}(t))$ in the interior of $\mc{K}$.
\cite[Example 1]{Vi} contains such an example.
\end{caution}

\begin{remark} \label{r:regionK}
In view of the above caution, it worthwhile to give a complete description of the region $\mc{K}$ for our $25$ Laurent polynomials.
This seems not an easy task. However, it is elementary to find the (real) maximum and minimum of $\mc{K}$.
The maximums are particular easy.
Since all Laurent polynomials have positive coefficients, the maximum on $\mb{T}^n$ are equal to $f(1,1,1)$.
We also observe that the polynomials for $V_N$ ($N=8,12_a,12_b,20,24$), $B_4$ and $B_{6b}$ are reciprocal, i.e.,
$$f(x_1,x_2,x_3) = f(x_1^{-1},x_2^{-1},x_3^{-1}).$$
So these polynomials only take real values on $\mb{T}^n$. In these cases we have that $\mc{K}^\circ = \emptyset$.
Finally, we remark that exactly the same proof as Lemma \ref{L:muinv} shows that the Mahler measure is also mutation-invariant.
\end{remark}

\subsection{The Mirror-Moonshine for the 25 Families}  \label{ss:MMM}
We have listed the Picard-Fuchs equations satisfied by the quantum periods of the $25$ Laurent polynomials. 
All of them have the {\em maximal unipotent monodromy} at zero. 
In terms of the Frobenius method, this is equivalent to say that apart from the quantum period $u_0(t)=1+\sum_{n=1}^\infty b_nt^n$ as a holomorphic solution around $t=0$,
there is a second solution $u_1(t)$ and a third solution $u_2(t)$ of the form 
\begin{align*} u_1(t) &= u_0(t)\log(t) + v_1(t),\\
u_2(t) &= \frac{1}{2} u_0(t)\log^2(t) + v_1(t)\log(t) + v_2(t),
\end{align*}
where $v_1(t)=\sum_{n\geq 1} b_{1,n}t^n$ and $v_2(t)=\sum_{n\geq 2} b_{2,n} t^n$ are holomorphic around $0$.
In this paper, $u_2(t)$ is irrelevant to our discussion.

Following the similar argument as in \cite{Vi},
we define
$$\tau = \frac{1}{2\pi i} \frac{u_1}{u_0},$$
then
$$q = e^{2\pi i\tau} = t + \cdots.$$
So we can locally invert $q$ around $0$ and obtain the so-called {\em mirror map}
$$t(\tau) = q + \cdots.$$
Let \begin{align*}
u_0(\tau) = u_0(t(\tau)) = 1 + \sum_{n\in \mb{N}} c_nq^n, \\
e(\tau) = u_0(\tau) \frac{D_q t(\tau)}{t(\tau)}  = 1 + \sum_{n\in \mb{N}} e_nq^n, 
\end{align*}
where $D_q = q\frac{d}{dq} = \frac{1}{2\pi i} \frac{d}{d\tau}$ is the usual differential for modular forms.

Finally we notice that the equality \eqref{eq:mb} can be rewritten as 
$$\mtd(t) = -\log t - \int_{0}^t (u_0(s)-1) \frac{ds}{s}.$$
By change of variables $s=t(\tau)$ we obtain an expression for $\mtd$ as a function of $\tau$. Note that $t=0$ corresponds to $\tau=i\infty$.
\begin{theorem}[\cite{Vi}] \label{T:mmm} Locally around $\tau = i\infty$, we have
	\begin{equation}\label{eq:local}  \mtd(\tau) = -2\pi i \tau - \sum_{n=1}^\infty \frac{e_n}{n} q^n, \quad q=e^{2\pi i\tau}.  \end{equation}
\end{theorem}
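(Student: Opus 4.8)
The plan is to start from the series expression \eqref{eq:mb} for $\mtd(t)$, namely $\mtd(t) = -\log t - \sum_{n=1}^\infty \frac{b_n}{n} t^n$, and rewrite it as the integral $\mtd(t) = -\log t - \int_0^t (u_0(s)-1)\,\frac{ds}{s}$, which is legitimate since $u_0(s) - 1 = \sum_{n\geq 1} b_n s^n$ converges near $s=0$ and we may integrate term by term. The key substitution is the change of variables $s = t(\tau')$ along the mirror map; since $t(\tau) = q + \cdots$ is invertible near $q=0$, this is valid in a punctured neighborhood of $\tau = i\infty$, and $s \to 0$ corresponds to $\tau' \to i\infty$.

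The next step is to identify the integrand after substitution. Under $s = t(\tau')$ we have $\frac{ds}{s} = \frac{D_q t(\tau')}{t(\tau')}\,\frac{dq'}{q'}$ (using $D_q = q\frac{d}{dq}$), so
\begin{align*}
\int_0^t (u_0(s)-1)\,\frac{ds}{s} &= \int (u_0(\tau') - 1)\,\frac{D_q t(\tau')}{t(\tau')}\,\frac{dq'}{q'}.
\end{align*}
Now I would use the definition $e(\tau) = u_0(\tau)\,\frac{D_q t(\tau)}{t(\tau)}$ to split the integrand: writing $u_0 - 1 = \frac{u_0 \cdot D_q t}{t} - \frac{D_q t}{t} \cdot \frac{1}{u_0}\cdot u_0$ is not quite the clean route — instead observe that $(u_0-1)\frac{D_q t}{t} = u_0\frac{D_q t}{t} - \frac{D_q t}{t} = e(\tau') - \frac{D_q t(\tau')}{t(\tau')}$. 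The term $\frac{D_q t}{t}\,\frac{dq'}{q'} = \frac{dt}{t}$ integrates to $\log t(\tau) = 2\pi i \tau + (\text{holomorphic vanishing at }i\infty)$, more precisely $\log t(\tau) - \log q = \log(t(\tau)/q) \to 0$; combined with $-\log t$ in front this produces the $-2\pi i \tau$ term plus a correction that cancels. Meanwhile $e(\tau') - 1 = \sum_{n\geq 1} e_n q'^n$, so $\int_0^{\tau}\bigl(e(\tau')-1\bigr)\frac{dq'}{q'} = \sum_{n\geq 1}\frac{e_n}{n} q^n$ by term-by-term integration, using $\int_0^q \frac{q'^{n}}{q'}dq' = \frac{q^n}{n}$ in the $q$-variable. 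Assembling the pieces yields $\mtd(\tau) = -2\pi i\tau - \sum_{n=1}^\infty \frac{e_n}{n} q^n$ as claimed, with all the stray $\log$ contributions telescoping away because $-\log t(\tau) = -\log q - \log(t/q)$ and $\log(t/q)$ is exactly $-\int \bigl(\text{the non-}2\pi i\tau\text{ part}\bigr)$.

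The main obstacle I anticipate is bookkeeping the logarithmic terms carefully: one must verify that the $-\log t$ in \eqref{eq:mb}, the $\log$ coming from $\int \frac{dt}{t}$, and the constant of integration all combine to give precisely $-2\pi i \tau$ with no leftover constant or holomorphic remainder. This hinges on the normalizations $t(\tau) = q + O(q^2)$ and $q = e^{2\pi i \tau}$, so that $\log t(\tau) = 2\pi i \tau + \log(1 + O(q))$, and the $O(q)$ piece is absorbed into the series $\sum \frac{e_n}{n}q^n$ — indeed one should check the relation $e_n$ versus the coefficients of $D_q\log(t/q)$ is consistent, i.e. that $e(\tau) = D_q \log t(\tau) \cdot u_0(\tau)/\bigl(\text{itself}\bigr)$... concretely, that $\mtd'$ in the $\tau$ variable equals $-2\pi i - \sum e_n q^n \cdot \frac{2\pi i}{q}\cdot\frac{q}{1}$, matching $D_q \mtd = -1 - \sum e_n q^n = -e(\tau)$, which is immediate from $D_q\mtd(t) = -1 - (u_0(t)-1) = -u_0$ together with the chain rule $D_q = \frac{D_q t}{t} D_t \cdot t = $ (the factor converting $D_t$ to $D_q$), giving $D_q \mtd = -u_0 \frac{D_q t}{t} = -e(\tau)$. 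This differential identity plus the matching of constant terms at $\tau = i\infty$ (both sides $\to +\infty$ like $-2\pi i\tau$, with vanishing $q$-expansion otherwise) is really the cleanest way to finish, and I would present the argument in that order: establish $D_q\mtd = -e(\tau)$ first, then integrate.
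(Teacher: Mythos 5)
Your proposal is correct and follows essentially the same route as the paper, which likewise rewrites \eqref{eq:mb} as $\mtd(t) = -\log t - \int_0^t (u_0(s)-1)\,\frac{ds}{s}$ and substitutes $s = t(\tau)$, using $(u_0-1)\frac{D_q t}{t} = e(\tau) - \frac{D_q t}{t}$ so that the $\log(t/q)$ terms cancel and only $-\log q = -2\pi i\tau$ survives. Your closing observation that one can instead verify $D_q\mtd = -u_0\cdot\frac{D_q t}{t} = -e(\tau)$ and match the $-\log q$ asymptotics at $i\infty$ is a clean equivalent formulation of the same bookkeeping.
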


It is a nontrivial fact that $t(\tau)$ is a modular function (i.e., a meromorphic modular form of weight $0$).
Before Golyshev, this had been proved by C. Doran in \cite{Dor} in the context of lattice-polarized $K_3$ surfaces.
It then follows from the general theory of modular forms that $u_0(\tau)$ and $e(\tau)$ are meromorphic modular forms of weight $2$ and $4$ respectively.
It turns out that $u_0(\tau)$ and $e(\tau)$ are genuine modular forms except for $V_2$ and $B_1$. In fact, they are all Eisenstein series (see Table \ref{tab:forms}). 

Alternatively one can start with a modular form $u_0(\tau)$ and a modular function $t(\tau)$ in the same row of Table \ref{tab:forms}, and employ the following general fact.
If $f(\tau)$ is an arbitrary modular form of positive weight $k$ and $t(\tau)$ a modular function, 
then the power series $F(t)$ obtained by expressing $f(\tau)$ locally as a power series in $t(\tau)$ always satisfies a linear differential equation 
of order $k + 1$ with algebraic (if $t(\tau)$ is a Hauptmodul, even polynomial) coefficients. 
A discussion of this phenomenon in general, and an algorithm to find the explicit linear differential equation, can be found in \cite[5.4]{Zabook} and \cite{Yi}.

It worths mentioning that the modular functions $c(\tau)=t(\tau)^{-1}$ are all of {\em moonshine} type \cite{CN}. 
Lian and Yau first observed this phenomenon in \cite{LY} and formulate their mirror-moonshine conjecture, 
which roughly says that for any pencil of $K3$ surfaces of generic Picard rank $19$, (the reciprocal of) the mirror map $c(\tau)=t(\tau)^{-1}$ is {\em commensurable} with some {\em McKay-Thompson series}.
The conjecture was proved by Doran in \cite{Dor}, and Galkin verified in \cite{Ga} that the mirror-moonshine also holds for the $8$ families in Table \ref{tab:Lp2} (of Picard rank $<19$).
In fact, for our $25$ families the function $c(\tau)$ is itself a McKay-Thompson series.

Let $\Gamma_0(N)$ be the congruence subgroup of $\SL_2(\mb{Z})$
$$\Gamma_0(N):=\{\sm{a&b\\c&d} \in\SL_2(\mb{Z}) \mid c\equiv 0 \mod N\}.$$
We recall the definition of the {\em Atkin-Lehner involution} $W_n$ for any $n\mid N$ such that $\gcd(n,N/n)=1$.
Note that the number of such divisors of $N$ is $2^{\sigma_0^+(N)}$, where $\sigma_0^+(N)$ is the number of distinct prime factors of $N$.
Over $\mb{C}$ they may be defined as elements of $\SL_2(\mb{R})$ as follows.
Let $a,b,c,d\in \mb{Z}$ be such that $adn-bcN/n=1$ and define $W_n = \frac{1}{\sqrt{n}}\sm{na & b\\ Nc & nd}$.
This construction is well-defined up to (left and right) multiplication by $\Gamma_0(N)$.
Moreover, modulo $\Gamma_0(N)$ we have the following relations
$$W_{n_1}W_{n_2} = W_{n_1n_2 / \gcd(n_1,n_2)^2}.$$
Let $W(N)$ be the group generated by all $W_{n}$.
Following the notations of \cite{CN}, we denote by $\Gamma_0^{+n}(N)$ the subgroup of $\SL_2(\mb{R})$ generated by $\Gamma_0(N)$ and $W_n$,
and by $\Gamma_0^+(N)$ the subgroup generated by $\Gamma_0(N)$ and $W(N)$.
Then $\Gamma_0(N)$ is a normal subgroup of $\Gamma_0^+(N)$ and the quotient $\Gamma_0^+(N)/\Gamma_0(N)$ is isomorphic to $\mb{Z}_2^{\sigma_0^+(N)}$.
We see from Table \ref{tab:forms} that the genus-$0$ congruence subgroups of $\SL_2(\mb{R})$ associated to those Thompson series of level $N$
are all equal to $\Gamma_0^+(N)$ except for $V_{12}$.

\renewcommand{\arraystretch}{1.5}
\begin{table}
	\caption{\label{tab:forms} List of Forms} 
	$\begin{array}{|c|c|c|c|c|} \hline
	\text{label} & \text{group} & c(\tau)=t(\tau)^{-1} & e(\tau) & u_0(\tau) \\ \hline  
	V_2	   &\Gamma_0(1) & j=(h^8+(2h^{-2})^8)^3,\ h=\frac{1^{1}}{2^{1}}&  E_4^{1/2} & E_6E_4^{-1/2} \\ \hline
	V_4    &\Gamma_0^+(2) & (h^{12}+64h^{-12})^2,\ h=\frac{1^{1}}{2^{1}} & 80(-1,1) & 24(1,-1)  \\ \hline
	V_6	   &\Gamma_0^+(3) & (h^{6}+27h^{-6})^2,\ h=\frac{1^{1}}{3^{1}} & 30(-1,1) & 12(1,-1)\\ \hline
	V_8	   &\Gamma_0^+(4) & h^{24},\ h=\frac{2^2}{1^14^1} & 16(-1,0,1) & 8(1,0,-1) \\ \hline
	V_{10}  &\Gamma_0^+(5) & h^6+22+125h^{-6},\ h=\frac{1^1}{5^1}& 10(-1,1) & 6(1,-1)  \\ \hline
	V_{12}  &\Gamma_0^{+6}(6) & h^{12},\ h=\frac{2^13^1}{1^1 6^1} & (-7,1,-1,7) & (5,-1,1,-5) \\ \hline
	V_{14}   &\Gamma_0^+(7) & (h^{2}+7h^{-2})^2,\ h=\frac{1^1}{7^1} & 5(-1,1) & 4(1,-1) \\ \hline
	V_{16}   &\Gamma_0^+(8) & h^{8},\ h=\frac{2^14^1}{1^1 8^1} & (-4,1,-1,4) & 2(2,-1,1,-2)  \\ \hline
	V_{18}   &\Gamma_0^+(9) & h^{6},\ h=\frac{3^2}{1^1 9^1} & 3(-1,0,1) & 3(1,0,-1) \\ \hline
	V_{22}   &\Gamma_0^+(11) & (1+3h)^2(1+3h+h^{-1}),\ h=\frac{3^1 33^1}{1^1 11^1} & 2(-1,1) & 12(1,-1)+\theta_{11} \\ \hline \hline 
	V_{12a}  & \Gamma_0^+(6) & (h^{3}+8h^{-3})^2,\ h=\frac{1^1 3^1}{2^1 6^1} &6(-1,-1,1,1) & 4(1,1,-1,-1) \\ \hline
	V_{12b}  & \Gamma_0^+(6) & (h^{2}+9h^{-2})^2,\ h=\frac{1^1 2^1}{3^1 6^1} &8(-1,1,-1,1) & 6(1,-1,1,-1) \\ \hline
	V_{20}  &  \Gamma_0^+(10) & (h^{2}+4h^{-2})^2,\ h=\frac{1^1 5^1}{2^1 10^1}  &2(-1,-1,1,1) & 2(1,1,-1,-1) \\ \hline 
	V_{24}	  & \Gamma_0^{+}(12)   & h^6,\ h=\frac{2^26^2}{1^13^14^112^1} &(-2,1,2,-2,-1,2) & 4(1,-1,-1,1,1,-1) \\ \hline
	V_{28}  & \Gamma_0^+(14) & (h^{2}-h^{-2})^2,\ h=\frac{2^1 7^1}{1^1 14^1} &(-1,-1,1,1) & 4(1,1,-1,-1)+\theta_{14} \\ \hline 
	V_{30}  & \Gamma_0^+(15) & (h+3h^{-1})^2,\ h=\frac{1^1 5^1}{3^1 15^1}  &(-1,-1,1,1) & 3(1,1,-1,-1)+\theta_{15} \\ \hline   
	\end{array}$
	
	{\raggedright \vspace{.1cm} The modular forms $e(\tau)$ and $u_0(\tau)$ are given by the coefficients $a_d$ and $a_d'$ in \eqref{eq:eq} and \eqref{eq:uq}, and $\theta_N$ is some cusp form of weight $2$ and level $N$. \par}
\end{table}

\subsection{Expressing Mahler Measure in Eisenstein-Kronecker Series} \label{ss:Eisen}
Let $E_2$ and $E_4$ be the Eisenstein series of weight $2$ and $4$ on $\Gamma_0(1)$:
\begin{align*}
E_2(\tau) &= 1 + 24 \sum_{n=1}^\infty \sigma_1(n)q^n, \\
E_4(\tau) &= 1 + 240 \sum_{n=1}^\infty \sigma_3(n)q^n.
\end{align*}
We also set 
\begin{align*}
G_{2,d} (\tau)= \sum_{n \geq 1} \sigma_1(n) q^{dn},\\
G_{4,d} (\tau)= \sum_{n \geq 1} \sigma_3(n) q^{dn}.
\end{align*}


By explicit calculation, we find for our 25 families except $V_2$ and $B_1$ that
all $u_0(\tau)$ are (up to a shift)
\footnote{Unlike the form $e(\tau)$, the modular form $u_0(\tau)$ is not invariant under a shift of the function value of $f$.
Such a shift of $f$ will give rise to a shift of $u_0(\tau)$ by some weight-$2$ cusp form (of the same level).} 
Eisenstein series of weight $2$ of the form:
\begin{equation} \label{eq:uq} 1+\sum_{d \mid N} a_d' dG_{2,d}  = \sum_{d \mid N} \frac{a_d'}{24}dE_2(d\tau) \qquad (a_d' \in\mb{Z}),
\end{equation}
and all $e(\tau)$ are Eisenstein series of weight $4$ of the form
\begin{equation} \label{eq:eq} 1-\sum_{d \mid N} a_d d^2G_{4,d}  = -\sum_{d \mid N} \frac{a_d}{240}d^2E_4(d\tau) \qquad (a_d \in\mb{Z}).
\end{equation} 
Note that the equations imply that \begin{align}  \label{eq:240}
\sum_{d \mid N} a_d' d &= 24,\\
\sum_{d \mid N} a_d d^2 &= 240,
\end{align}
which is equivalent to say that both series are of level $N$.
In fact, we can further observe that $a_d = -a_e$ and $a_d' = -a_e'$ if $de=N$. At this moment, we do not need this observation.

Now according to Lemma \ref{L:mtd} and Theorem \ref{T:mmm}, to compute the Mahler measure of $f_{c(\tau)}$, we need to compute
\begin{align} \Re\Big( -2\pi i \tau - \sum_{n\in\mb{N}} \frac{e_n}{n}q^n \Big) &= \Re\Big( -2\pi i \tau + \sum_{n\in \mb{N}} \frac{\sigma_3(n)}{n} \sum_{d \mid N} a_d dq^{dn}  \Big) \notag \\
&= \Re\Big( -2\pi i \tau  + \sum_{d \mid N} a_d \sum_{n\in \mb{N}} {\sigma_3(n)} d \frac{q^{dn}}{n}  \Big)   \notag \\
&= \Re\Big( -2\pi i \tau + \sum_{d \mid N} a_d \sum_{n\in \mb{N}} d^{-1} D_q^2 \left( \Li_3(q^{dn}) \right) \Big).  \label{eq:Li} \end{align}
The last equality due to the relation 
$$D_q^2(\Li_3(q^{dn})) = (dn)^2 \Li_1(q^{dn})\ \text{ for } n,d\in\mb{N}.$$

For this, following \cite{Be1,Sa} we introduce 
$$F_d(\xi)=\sum_{n\in\mb{N}} \Li_3(q^{nd}+\xi).$$
It is known that the Fourier series of $F_d(\xi)$ converges to $F_d(\xi)$ at $\xi =0$, i.e.,
$F_d(0) = \sum_{n\in\mb{Z}} \hat{F}_d(n)$, where the Fourier coefficients
$$\hat{F}_d(n) = \begin{cases} -\frac{1}{2\pi i} \sum_{m\geq 1} (m^3(dm\tau -\frac{n}{4}))^{-1} & \text{if } 4\mid n, \\ 0 & \text{otherwise.} \end{cases}$$

Since $D_q^2 = -\frac{1}{4\pi^2}\frac{d^2}{d\tau^2}$, we have that
\begin{equation} \label{eq:DFd} d^{-1}D_q^2(F_d(0)) = \frac{1}{4\pi^3i} \sum_{n\in\mb{Z}, m \geq 1} \frac{d}{m(dm\tau-n)^3}.
\end{equation}
Then we can continue our computation 
\begin{align} \eqref{eq:Li} &= \Re \Big(-2\pi i \tau + \sum_{d \mid N} a_d d^{-1} D_q^2\big( F_d(0)\big) \Big) \notag \\
&=\Re \bigg(-2\pi i \tau + \frac{1}{4\pi^3i} \sum_{d \mid N} a_d  \sum_{n\in\mb{Z}, m \geq 1} \frac{d}{m(dm\tau-n)^3} \bigg) \notag \\   
&=\Img \bigg(2\pi \tau +  \frac{1}{8\pi^3} \sum_{d\mid N} a_d \sum_{n\in\mb{Z}, m \neq 0} \frac{1}{m}  \frac{d}{(dm\tau-n)^3}  \bigg) \label{eq:inter}
\end{align}

It is straightforward to verify that
\begin{equation} \label{eq:Im} \frac{1}{m} \Img \frac{1}{(m\tau +n)^3} = -\Img \tau \left( 2\Re\bigg(\frac{1}{(m\tau+n)^3(m\br{\tau}+n)}\bigg) + \frac{1}{(m\tau+n)^2(m\br{\tau}+n)^2} \right).
\end{equation}
Moreover, \begin{align} \label{eq:zeta4}
\sum_{d\mid N} a_d \sum_{n\neq 0, m=0} 2\Re\bigg(\frac{d^2}{(dm\tau+n)^3(m\br{\tau}+n)}\bigg) + \frac{d^2}{(dm\tau+n)^2(m\br{\tau}+n)^2}  = \sum_{d\mid N} a_d d^2  \sum_{n\in \mb{Z}}  \frac{3}{n^4} = 3\cdot 240 \cdot  \frac{\pi^4}{45}.
\end{align}

Combining \eqref{eq:Im} and \eqref{eq:zeta4} with \eqref{eq:inter}, we find that \eqref{eq:inter} is equal to
\begin{equation}
\frac{\Img \tau}{(2\pi)^3} \sum_{d \mid N} a_d d^2 \Big(\sum_{m,n}' 2\Re \left( (dm\tau +n)^{-3}(dm\br{\tau}+n)^{-1}\right) + (dm\tau +n)^{-2}(dm\br{\tau}+n)^{-2} \Big),
\end{equation}
where the summation $\sum_{m,n}'$ means $\sum_{(m,n)\in \mb{Z}^2\setminus\{0,0\}}$.

We set \begin{equation} \label{eq:Hd}
H_d(\tau) = \sum_{m,n}' 2\Re \left( (dm\tau +n)^{-3}(dm\br{\tau}+n)^{-1}\right) + (dm\tau +n)^{-2}(dm\br{\tau}+n)^{-2},
\end{equation}
and thus obtain the following theorem.
\begin{theorem} \label{T:mmmLG} For each $f_c$ of the $25$ families except $V_2$ and $B_1$, let $\mc{F}$ be any fundamental domain of the modular function $c(\tau)$ containing $i\infty$. Then for any $\tau \in \mc{F}$ the function $\Re(\mtd(c(\tau)))$ is equal to
	$$ \frac{\Img \tau}{(2\pi)^3} \sum_{d \mid N} a_d d^2H_d(\tau),$$
where $H_d(\tau)$ is defined as above and $a_d$ is given by the column $e(\tau)$ of Table \ref{tab:forms}.
\end{theorem}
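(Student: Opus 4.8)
The plan is to reduce the statement to the local expansion at the cusp and then run the Fourier computation of Bertin and Samart in the uniform form dictated by the Eisenstein shape of $e(\tau)$. By Lemma~\ref{L:mtd} it suffices to analyze $\Re(\mtd(c(\tau)))$, and Theorem~\ref{T:mmm} gives, near $\tau=i\infty$, the expansion~\eqref{eq:local}, $\mtd(c(\tau))=-2\pi i\tau-\sum_{n\ge 1}\tfrac{e_n}{n}q^n$. Because $e(\tau)$ is a modular form of weight $4$, its coefficients grow polynomially, so this series converges for every $q=e^{2\pi i\tau}$ with $\Img\tau>0$; hence $\mtd(c(\tau))$ extends to a holomorphic function on $\mf{H}$ and $\Re(\mtd(c(\tau)))$ is real-analytic there. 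The target $\frac{\Img\tau}{(2\pi)^3}\sum_{d\mid N}a_d d^2 H_d(\tau)$ is likewise real-analytic on $\mf{H}$, since each $H_d$ is a Kronecker--Eisenstein series of weight $3+1>2$ and so converges absolutely. It will therefore be enough to verify the identity on a neighbourhood of $i\infty$ and then invoke the identity theorem for real-analytic functions on the connected set $\mf{H}\supseteq\mc{F}$.

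To get the identity near $i\infty$, I would first insert the explicit form of $e(\tau)$. By~\eqref{eq:eq} one has $e(\tau)=1-\sum_{d\mid N}a_d d^2 G_{4,d}(\tau)$; expanding $G_{4,d}(\tau)=\sum_{n\ge 1}\sigma_3(n)q^{dn}$ and re-summing turns $-\sum_n\tfrac{e_n}{n}q^n$ into $\sum_{d\mid N}a_d\sum_{n\ge 1}d^{-1}D_q^2\big(\Li_3(q^{dn})\big)$ via $D_q^2(\Li_3(q^{dn}))=(dn)^2\Li_1(q^{dn})$, which is exactly~\eqref{eq:Li}. The analytic core is then the Bertin--Samart evaluation of the boundary value of $F_d(\xi)=\sum_n\Li_3(q^{nd}+\xi)$: its Fourier series converges to $F_d$ at $\xi=0$ with the coefficients $\hat F_d(n)$ recalled before the statement, giving~\eqref{eq:DFd}, $d^{-1}D_q^2(F_d(0))=\frac{1}{4\pi^3 i}\sum_{n\in\mb Z,\,m\ge 1}\frac{d}{m(dm\tau-n)^3}$. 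Substituting this and passing from $\Re$ to $\Img$, then symmetrizing the $m$-sum over $\mb Z\setminus\{0\}$, yields~\eqref{eq:inter}.

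The final step is to convert the lattice sum into the Kronecker--Eisenstein kernel defining $H_d$. The elementary identity~\eqref{eq:Im} rewrites each $\tfrac1m\Img(m\tau+n)^{-3}$ as $-\Img\tau$ times $2\Re\big((m\tau+n)^{-3}(m\br\tau+n)^{-1}\big)+(m\tau+n)^{-2}(m\br\tau+n)^{-2}$; applying it to the terms with $m\neq 0$ reproduces the $m\neq 0$ part of $\sum_{d\mid N}a_d d^2 H_d$, while the residual $-2\pi i\tau$ is recast via~\eqref{eq:zeta4}, which uses $\sum_{d\mid N}a_d d^2=240$ and $\zeta(4)=\pi^4/90$, precisely as its $m=0$ part; the two pieces combine into $\frac{\Img\tau}{(2\pi)^3}\sum_{d\mid N}a_d d^2 H_d(\tau)$, and the identity theorem extends the equality to all of $\mc{F}$. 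The step I expect to be the main obstacle is the Fourier-theoretic identity $F_d(0)=\sum_n\hat F_d(n)$: since $F_d$ is only piecewise smooth in $\xi$, one must establish pointwise convergence of its Fourier series at the (boundary/potential-discontinuity) point $\xi=0$ and justify differentiating the resulting series term by term under $D_q^2$ — this is where the arguments of~\cite{Be1,Sa} have to be followed and adapted to our $a_d$.
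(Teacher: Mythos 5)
Your proposal is correct and follows essentially the same route as the paper: Lemma \ref{L:mtd} and Theorem \ref{T:mmm} give the local $q$-expansion, the Eisenstein shape \eqref{eq:eq} of $e(\tau)$ converts it to trilogarithm sums, the Fourier evaluation of $F_d(0)$ gives \eqref{eq:DFd}, and the identities \eqref{eq:Im} and \eqref{eq:zeta4} assemble the result into $\frac{\Img\tau}{(2\pi)^3}\sum_{d\mid N}a_d d^2 H_d(\tau)$, with your check that the $m=0$ terms reproduce $2\pi\Img\tau$ matching the paper's computation exactly. Your added remarks — the real-analytic continuation/identity-theorem step to pass from a neighbourhood of $i\infty$ to all of $\mc{F}$, and flagging the pointwise convergence of the Fourier series of $F_d$ at $\xi=0$ as the technical crux — are points the paper leaves implicit (the latter is simply cited as known from \cite{Be1,Sa}), so no essential difference.
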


\begin{caution}The equality in Theorem \ref{T:mmm} only holds locally around $i\infty$.
So it is necessary to choose a fundamental domain containing $i\infty$.
As a simple example, let us consider the family $V_4$. In this case, $c(\tau)=\big(\frac{\eta(\tau)^{12}}{\eta(2\tau)^{12}}+64\frac{\eta(2\tau)^{12}}{\eta(\tau)^{12}}\big)^2$ is a Hauptmodul for the monodromy group $\Gamma_0^+(2)$.
One can check that $c\big(\frac{1}{\sqrt{-2}}\big) = c\big(\frac{1}{3}(\frac{1}{\sqrt{-2}} -1)\big)=256$ but the $\mtd\big(\frac{1}{\sqrt{-2}}\big) \neq \mtd\big(\frac{1}{3}(\frac{1}{\sqrt{-2}} -1)\big)$.
The correct value for the Mahler measure is the former because $\frac{1}{3}\big(\frac{1}{\sqrt{-2}} -1\big)$ is in a fundamental domain away from $i\infty$.
\end{caution}

The result for families $V_{24}$ and $B_{6b}$ were proved in \cite{Be1}, and for families $V_4,V_6$ and $V_8$ were proved in \cite{Sa}.
It will turn out that for certain choices of $\tau$, $H_d(\tau)$ is a sum of the special values of two partial Hecke $L$-functions, which are both related to theta functions  (see Proposition \ref{P:dHd} and \ref{P:deH}).

\section{Generalized Theta Functions and Hecke $L$-functions} \label{S:thetaHecke}
\subsection{Imaginary Quadratic Fields and Quadratic Forms}
We review some elementary facts about orders in imaginary quadratic fields and their relation to binary quadratic forms, mostly taken from \cite{Co}.
Let $K$ be a quadratic field, then $K=\mb{Q}(\sqrt{n})$ for a unique squarefree integer. 
Recall that the discriminant $d_K$ of $K$ is defined to be 
$$d_K = \begin{cases} n & \text{if } n\equiv 1 \mod 4\\ 4n & \text{otherwise.} \end{cases}$$
An {\em order} $\O$ in $K$ is a subring $\O\subset K$ which is a finitely generated $\mb{Z}$-module of rank $[K:\mb{Q}]$.
The ring of integer $\O_K$ of $K$ is the maximal order of $K$.
Since $\O$ and $\O_K$ are free $\mb{Z}$-modules of rank $2$, it follows that $m=[\O_K:\O]<\infty$.
$m$ is called the {\em conductor} of $\O$, and $D=m^2 d_K$ is by definition the {\em discriminant} of $\O$.
Then we have that 
$$\O = \mb{Z} + m\O_K = \Span_{\mb{Z}}\left(1, m\frac{d_K+\sqrt{d_K}}{2}\right).$$

\begin{definition} Given an ideal $\m$ of $O_K$, an $\O$-ideal $\a$ is {\em prime to} $\m$ if $\a+\m\O=\O$.
\end{definition}
\noindent In most situations we consider, $\m$ will just be the principal ideal $(m)$ or $(1)$.
Being prime to $(m)$ is equivalent to that $\gcd(\Nm(\a),m)=1$ (\cite[Lemma 7.18]{Co}), where $\Nm(\a)=|\O/\a|$ is the {\em ideal norm} of $\a$.
Let
\begin{align*}
I(\O,\m)&:= \text{group of invertible fractional $\O$-ideals prime to $\m$}, \\
P(\O,\m)&:=  \text{subgroup of $I(\O,\m)$ generated by principal ideals.}
\end{align*}
When $\m=(m)$, we write $I(\O,m)$ and $P(\O,m)$ for $I(\O,(m))$ and $P(\O,(m))$. 
When $\m=(1)$, we write $I(\O)$ and $P(\O)$ for $I(\O,(1))$ and $P(\O,(1))$. 
We call the quotient 
$$Cl(\O) := I(\O)/P(\O)$$
the {\em ideal class group} of $\O$.
When $\O$ is the maximal order $\O_K$, $I(\O_K)$ and $P(\O_K)$ will be denoted by $I_K$ and $P_K$.
The assignment $\a \mapsto \a\O_K$ gives an isomorphism $I(\O,m) \cong I_K(m)$ with the inverse given by $\a \mapsto \a\cap \O$.
\begin{lemma}[{\cite[Proposition 7.22]{Co}}] \label{L:ClO} Let $K$ be an imaginary quadratic field, and let $\O$ be an order of conductor $m$ in $K$. 
	Then there are natural isomorphisms 
	\begin{equation} \label{eq:isoClO} Cl(\O) \cong I(\O,m)/P(\O,m) \cong I_K(m) / P_{K,\mb{Z}}(m),   \end{equation}
where $P_{K,\mb{Z}}(m) := \{\alpha\O_k \mid \alpha\in \O_K \text{ and } \alpha \equiv a \mod m\O_K \text{ for some integer $a$ relatively prime to $m$}\}.$
\end{lemma}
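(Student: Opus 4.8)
The plan is to establish the two isomorphisms of \eqref{eq:isoClO} in turn, using only the structural description $\O=\mb{Z}+m\O_K$ and the isomorphism $\psi\colon I(\O,m)\xrightarrow{\sim}I_K(m)$, $\a\mapsto\a\O_K$, with inverse $\a\mapsto\a\cap\O$ on integral ideals, recorded just above the statement.

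\emph{The isomorphism $Cl(\O)\cong I(\O,m)/P(\O,m)$.} The essential point is that every ideal class of $\O$ has a representative in $I(\O,m)$. To see this, let $\a$ be an invertible fractional $\O$-ideal; being projective of rank one it is locally free, so for each prime $p\mid m$ it has a local generator $\alpha_p$ over the semilocal ring $\O_{(p)}$, and by the Chinese Remainder Theorem I may choose a single $\alpha\in\a$ that generates $\a_{(p)}$ for every $p\mid m$ simultaneously. Then $\alpha\a^{-1}$ is an integral $\O$-ideal that is trivial at all $p\mid m$, i.e.\ $\alpha\a^{-1}\in I(\O,m)$, and it represents $[\a]^{-1}$; applying this to $\a^{-1}$ gives a representative of $[\a]$. (That an ideal prime to $m$ is automatically invertible is part of \cite[\S 7]{Co}.) Hence $I(\O,m)\to Cl(\O)$, $\a\mapsto[\a]$, is a surjective homomorphism whose kernel is the set of principal ideals lying in $I(\O,m)$, i.e.\ $P(\O,m)$; this gives the first isomorphism.

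\emph{The isomorphism $I(\O,m)/P(\O,m)\cong I_K(m)/P_{K,\mb{Z}}(m)$.} In view of $\psi$ it suffices to prove $\psi(P(\O,m))=P_{K,\mb{Z}}(m)$, which I would do by matching generating sets. From $\O=\mb{Z}+m\O_K$ one gets $m\O_K\cap\mb{Z}=m\mb{Z}$, hence $\O/m\O_K\cong\mb{Z}/m\mb{Z}$; together with $\Nm(\gamma\O)=|N_{K/\mb{Q}}(\gamma)|$ and the congruence $N_{K/\mb{Q}}(\gamma)\equiv a^{2}\pmod m$ for $\gamma\equiv a\pmod{m\O_K}$, this shows that for $\gamma\in\O_K$ one has ``$\gamma\in\O$ and $\gcd(N_{K/\mb{Q}}(\gamma),m)=1$'' if and only if ``$\gamma\equiv a\pmod{m\O_K}$ for some integer $a$ with $\gcd(a,m)=1$''. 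The former describes the generators $\gamma\O$ of $P(\O,m)$ (principal integral ideals prime to $m$) and the latter the generators $\gamma\O_K$ of $P_{K,\mb{Z}}(m)$. It remains to see that $\psi$ and its inverse actually interchange these ideals, i.e.\ that $\gamma\O_K\cap\O=\gamma\O$ for such $\gamma$: the inclusion $\gamma\O\subseteq\gamma\O_K\cap\O$ holds because $\gamma\in\O$, and conversely, if $x=\gamma\delta\in\O$ with $\delta\in\O_K$, then $\gamma\equiv a\pmod{m\O_K}$ gives $a\delta\equiv x\pmod{m\O_K}$, so $a\delta\in\O$, and with $b,c\in\mb{Z}$ such that $ab+mc=1$ we get $\delta=b(a\delta)+c(m\delta)\in\O$, hence $x\in\gamma\O$. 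Since $P(\O,m)$ and $P_{K,\mb{Z}}(m)$ are the subgroups generated by these matched families, $\psi(P(\O,m))=P_{K,\mb{Z}}(m)$ follows.

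The step I expect to be the main obstacle is the careful handling of the phrase ``prime to $m$'' rather than any single calculation. One must ensure that $I(\O,m)$ is the group generated by \emph{integral} ideals prime to $m$, so that $P(\O,m)$ is generated by \emph{integral} principal ideals prime to $m$: allowing all principal fractional ones would make $\psi(P(\O,m))$ the full group of principal prime-to-$m$ $\O_K$-ideals, which is in general strictly larger than $P_{K,\mb{Z}}(m)$. One must also verify that ``prime to $m$'' is preserved by $\psi$ and is unchanged on passing from $\O$ to $\O_K$. The one place where non-maximality of $\O$ genuinely enters is the invertibility of ideals prime to the conductor, used in the local-principality step; I would cite this from \cite{Co}.
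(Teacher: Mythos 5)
Your proof is correct. The paper offers no argument of its own here -- it simply quotes \cite[Proposition 7.22]{Co} -- and your two steps (finding a prime-to-$m$ representative in each class via local generators, then matching the generating sets of $P(\O,m)$ and $P_{K,\mb{Z}}(m)$ through the congruence $N_{K/\mb{Q}}(\gamma)\equiv a^2 \pmod m$ and the identity $\gamma\O_K\cap\O=\gamma\O$) reconstruct essentially the standard proof given in that reference, including the genuinely delicate point that $\gamma\O_K\cap\O=\gamma\O$ for $\gamma\equiv a\pmod{m\O_K}$ with $\gcd(a,m)=1$.
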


We will denote a binary quadratic form $f(x,y) = ax^2+bxy+cy^2$ by the triple $[a,b,c]$.
A quadratic form $[a,b,c]$ is called {\em primitive} if $\gcd(a,b,c)=1$.
We denote by $\Q$ (resp. $\Q^0$) the set of all (resp. primitive) positive definite quadratic forms ({\em p.d.q.f.}), 
and by $\Q_D$ (resp. $\Q_D^0$) the subset of $\Q$ (resp. $\Q^0$) of all quadratic forms of {\em discriminant} $D=b^2-4ac$.
Consider the equivalence relation given by the natural $\SL_2(\mb{Z})$-action on $\Q$:
$$f_1 \sim f_2 \iff f_1(x,y) = f_2(px+qy,rx+sy) \text{ for some $\sm{p&q\\r&s}\in \SL_2(\mb{Z})$}. $$
Note that the subset $\Q^0$ and $\Q_D$ are $\SL_2(\mb{Z})$-invariant.

We denote by $Cl(D)$ the equivalence classes of $\Q_D^0$. 
Gauss and Dirichlet defined a composition on $Cl(D)$ making it an abelian group \cite{Co}.
We mention that the identity element is the class containing the form \begin{align}
\left[1,0,-D/4\right] & &\text{if $D\equiv 0 \mod 4$},\\
\left[1,1,(1-D)/4 \right] & &\text{if $D\equiv 1 \mod 4$},
\end{align}
and the inverse of the class containing  $[a,b,c]$ in $Cl(D)$ is the class containing $[a,-b,c]$.

\begin{theorem}[{\cite[Theorem 7.7]{Co}}] \label{T:formclass} Let $\O$ be the order of discriminant $D$ in $K$, and $[a,b,c]\in \Q_D^0$. Then the map  
\begin{equation}\label{eq:bijection} \mc{I}:  [a,b,c] \mapsto \Span_{\mb{Z}}\bigg( a,\frac{b-\sqrt{D}}{2} \bigg)  
\end{equation}
induces an isomorphism $Cl(D) \cong Cl(\O)$.
\end{theorem}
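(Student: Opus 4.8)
The plan is to prove that the map $\mc{I}$ of \eqref{eq:bijection} is well-defined on $\SL_2(\mb{Z})$-classes, is a bijection onto $Cl(\O)$, and is a group homomorphism; the group-structure part I would only sketch, since the bijectivity is the substantive point and (as in Cox) the homomorphism property can be deferred once one sees that composition of forms matches multiplication of ideals. First I would verify that for $[a,b,c]\in\Q^0_D$ the lattice $\a=\Span_{\mb{Z}}(a,\tfrac{b-\sqrt D}{2})$ is a proper (invertible) fractional $\O$-ideal of $K=\mb{Q}(\sqrt D)$. The key computations are: (i) $\a$ is stable under multiplication by $\tfrac{D+\sqrt D}{2}$, which generates $\O$ over $\mb{Z}$ — this uses $b\equiv D\pmod 2$ and the relation $c=(b^2-D)/(4a)$, so that $\tfrac{b-\sqrt D}{2}\cdot\tfrac{b-\sqrt D}{2}=-ac+b\tfrac{b-\sqrt D}{2}\in\a$; (ii) the order $\{x\in K : x\a\subseteq\a\}$ equals $\O$ exactly, not a larger order, which is where primitivity $\gcd(a,b,c)=1$ enters. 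Then $\a$ is proper, hence invertible, by the standard fact that a fractional ideal of $\O$ is invertible iff it is proper (for orders in quadratic fields).

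Next I would check that $\mc{I}$ descends to classes: if $[a',b',c']=[a,b,c]\circ\sm{p&q\\r&s}$ then the two lattices differ by multiplication by a nonzero element of $K$ (one writes down the change-of-basis element explicitly, essentially $\tfrac{1}{?}(r\sqrt D + \text{stuff})$ built from the $\SL_2(\mb{Z})$ entries), so they define the same element of $Cl(\O)$. For \textbf{surjectivity}: given a proper $\O$-ideal $\a$, choose an oriented $\mb{Z}$-basis $\a=\Span_{\mb{Z}}(\alpha,\beta)$ with $\mathrm{Im}(\beta/\alpha)>0$; then $\mathrm{N}(x\alpha+y\beta)/\mathrm{N}(\a)$ is an integral primitive p.d.q.f.\ of discriminant $D$, and unwinding the construction shows its class maps back to the class of $\a$ under $\mc{I}$. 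For \textbf{injectivity}: if $\mc{I}([a,b,c])$ and $\mc{I}([a',b',c'])$ are equivalent $\O$-ideals, i.e.\ differ by a principal ideal $(\lambda)$, then $\lambda$ carries the oriented basis $(a,\tfrac{b-\sqrt D}{2})$ to an oriented basis of the other lattice up to $\SL_2(\mb{Z})$; tracking the quadratic form attached to a basis as above, and using that the form-to-basis correspondence is $\SL_2(\mb{Z})$-equivariant, forces $[a,b,c]\sim[a',b',c']$. Finally, for the homomorphism property I would invoke the Dirichlet "united forms" description of Gaussian composition (recalled just before the theorem) and match it with the product of the corresponding ideals; this is the one place I would be content to cite \cite{Co} rather than reproduce the bookkeeping.

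I expect the \textbf{main obstacle} to be item (ii) above — proving that the multiplier ring of $\a=\Span_{\mb{Z}}(a,\tfrac{b-\sqrt D}{2})$ is \emph{exactly} $\O$ and no larger, which is precisely the content of primitivity and is the step that makes the map land in $Cl(\O)$ rather than in the class group of some order containing $\O$; dually, on the surjectivity side one must know that a proper $\O$-ideal really does produce a \emph{primitive} form, which again is the same computation read backwards. Once the correspondence "proper $\O$-ideal with oriented basis $\leftrightarrow$ primitive p.d.q.f.\ of discriminant $D$" is nailed down as an $\SL_2(\mb{Z})$-equivariant bijection on bases, both injectivity and surjectivity drop out, and the only remaining work — matching compositions — is the classical computation I would cite. (Everything here is for $D<0$, so "positive definite" is automatic and $\O$ sits inside the imaginary quadratic field $K$, matching the setting of Lemma \ref{L:ClO}.)
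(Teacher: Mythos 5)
The paper does not actually prove this statement---it is quoted directly from Cox \cite[Theorem 7.7]{Co}---and your outline reproduces the standard argument given there: properness of the lattice $\Span_{\mb{Z}}(a,\tfrac{b-\sqrt D}{2})$ via the relation $D=b^2-4ac$ and primitivity, the norm-form construction $N(x\alpha+y\beta)/N(\mf{a})$ for surjectivity and injectivity, and Dirichlet composition for the homomorphism property. The plan is correct; the only point needing care is the sign/orientation bookkeeping between the basis $\bigl(a,\tfrac{b-\sqrt D}{2}\bigr)$ (which has $\Img(\beta/\alpha)<0$ for $a>0$) and your chosen normalization of the associated form, which you already flag as the ``unwinding'' step.
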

\noindent Hereafter we will freely identify $Cl(\O)$ with $Cl(D)$. 
Since $[a,b,c]\sim [c,-b,a]$ in $Cl(D)$, we also have a twin map $\check{\mc{I}}: [a,b,c] \mapsto \Span_{\mb{Z}}\Big(c, \frac{b+\sqrt{D}}{2} \Big)$ inducing the same isomorphism.

\subsection{CM Newforms with Rational Coefficients}
In this subsection, we tailor Sch\"{u}tt's work \cite{Sc} to our setting.
For unexplained (standard) terminology about modular forms in this section, we refer readers to the textbook \cite{Zabook, Iw}.

Let $\m$ be an ideal of an imaginary quadratic field $K$ and $\ell\in\mb{N}$. 
We denote \begin{align*}
P_{K,1^\times}({\m}) :&= \{\alpha\O_K \mid \alpha\in K^\times,\  \alpha\equiv 1 \modm \m \},
\end{align*}
where $\!\modm$ is the {\em multiplicative congruence}. This is a subgroup of $P_K(\m)$.

\begin{definition} A {\em Hecke character} $\psi$ of $K$ modulo $\m$ with $\infty$-type $\ell$ is a homomorphism
$$\psi: I_K(\m) \to \mb{C}^\times $$	
such that for all $\alpha \in K_{1^\times,{\m}}$, we have 
	$$\psi(\alpha \O_K) = \alpha^\ell.$$
The ideal $\m$ is called the {\em conductor} of $\psi$ if it is minimal in the sense that if $\psi$ is defined modulo $\m'$, then $\m | \m'$.

The definition requires some elaboration. We denote by $X_K^\ell(\m)$ the set of all Hecke characters of $K$ modulo $\m$ with $\infty$-type $\ell$.
To define an element in $X_K^\ell(\m)$ completely, one need to further specify a character 
$$\phi: I_K(\m)/P_{K,\mb{Z}}(\m) \to \mb{C}^\times$$
and a character 
$$\eta_{\mb{Z}}: P_{K,\mb{Z}}(\m) / P_{K,1^\times}(\m) \to \mb{C}^\times.$$
Recall that for $\m=(m)$, $I_K(m)/P_{K,\mb{Z}}(m)$ is isomorphic to the class group $Cl(\O)$.
The following lemma is well-known (eg., \cite[Proposition 1.2]{Sh}).

\begin{lemma} $P_{K}(\m) / P_{K,1^\times}(\m) \cong (\O_K/\m)^\times$ and the subgroup $P_{K,\mb{Z}}(\m) / P_{K,1^\times}(\m)$ of $P_{K}(\m) / P_{K,1^\times}(\m)$ can be identified with 
	$$G_{\mb{Z}}:=\{\alpha \in (\O_K/\m)^\times \mid \alpha \equiv a \mod \m,\ \exists a\in \mb{Z}   \}.$$
\end{lemma}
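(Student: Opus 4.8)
The plan is to establish the isomorphism $P_{K}(\m) / P_{K,1^\times}(\m) \cong (\O_K/\m)^\times$ first, and then identify the image of the distinguished subgroup $P_{K,\mb{Z}}(\m)/P_{K,1^\times}(\m)$ under it. For the first part, I would define a map $P_K(\m) \to (\O_K/\m)^\times$ as follows: given a principal ideal $\alpha\O_K$ prime to $\m$ (so $\alpha$ may be taken coprime to $\m$), send it to the class of $\alpha$ in $(\O_K/\m)^\times$. The content here is well-definedness: if $\alpha\O_K = \beta\O_K$ then $\alpha = u\beta$ for a unit $u\in\O_K^\times$, so the map is only well-defined modulo the image of $\O_K^\times$ in $(\O_K/\m)^\times$. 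Since $K$ is imaginary quadratic, $\O_K^\times$ is finite, and in the generic case it is $\{\pm 1\}$; one must be a little careful for $K=\mb{Q}(i)$ and $K=\mb{Q}(\sqrt{-3})$. The cleanest fix is to observe that the kernel of the composite $P_K(\m)\to(\O_K/\m)^\times/(\text{image of }\O_K^\times)$ is exactly $P_{K,1^\times}(\m)$ modulo units — but in fact the standard statement absorbs this by defining $P_{K,1^\times}(\m)$ via multiplicative congruence which already quotients by the relevant unit ambiguity. So the honest argument is: $\alpha\O_K\mapsto \alpha\bmod\m$ descends to a well-defined homomorphism $P_K(\m)/P_{K,1^\times}(\m)\to(\O_K/\m)^\times$ precisely because two generators of the same ideal that are each $\equiv$ a unit times the other become identified under the multiplicative-congruence relation $\modm$; I would spell this out carefully since it is the crux.

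Next I would check surjectivity and compute the kernel. Surjectivity amounts to: every class in $(\O_K/\m)^\times$ is represented by some $\alpha\in\O_K$ with $\alpha\O_K$ prime to $\m$; this is immediate since any lift of a unit mod $\m$ is automatically coprime to $\m$. The kernel of $P_K(\m)\to(\O_K/\m)^\times$ consists of those $\alpha\O_K$ with $\alpha\equiv 1\bmod\m$ — but this is not quite $P_{K,1^\times}(\m)$ on the nose because of the unit discrepancy between $\equiv 1 \bmod \m$ and $\equiv 1 \modm \m$; again this is exactly where the multiplicative congruence in the definition of $P_{K,1^\times}(\m)$ does its job, and after passing to the quotient the kernel is $P_{K,1^\times}(\m)/P_{K,1^\times}(\m)$, i.e. trivial, giving the isomorphism $P_K(\m)/P_{K,1^\times}(\m)\cong(\O_K/\m)^\times$.

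For the second assertion, I would trace through the first isomorphism restricted to $P_{K,\mb{Z}}(\m)$. By definition $P_{K,\mb{Z}}(\m)$ is generated by $\alpha\O_K$ where $\alpha\in\O_K$ satisfies $\alpha\equiv a\bmod\m\O_K$ for some $a\in\mb{Z}$ coprime to $m$. Under $\alpha\O_K\mapsto\alpha\bmod\m$, such an element lands in $\{\bar\alpha\in(\O_K/\m)^\times : \bar\alpha\equiv a\bmod\m \text{ for some } a\in\mb{Z}\}$, which is precisely $G_{\mb{Z}}$. Conversely, any class in $G_{\mb{Z}}$ lifts to an $\alpha$ with $\alpha\equiv a\bmod\m$, and $\alpha\O_K$ is then in $P_{K,\mb{Z}}(\m)$ (the coprimality of $a$ to $m$ being forced by invertibility mod $\m$ when $\m=(m)$, and handled analogously in general). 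So the restriction of the isomorphism identifies $P_{K,\mb{Z}}(\m)/P_{K,1^\times}(\m)$ with $G_{\mb{Z}}$, as claimed.

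The main obstacle is purely the bookkeeping around units: getting the well-definedness of $\alpha\O_K\mapsto\alpha\bmod\m$ exactly right, since a priori only the coset $\alpha\O_K^\times\bmod\m$ is determined, and seeing that the multiplicative congruence $\modm$ in the definition of $P_{K,1^\times}$ is precisely what makes the quotient $P_K(\m)/P_{K,1^\times}(\m)$ (rather than a quotient of it by a unit-image subgroup) isomorphic to the full $(\O_K/\m)^\times$. Everything else — surjectivity, the description of the kernel, and the identification of $G_{\mb{Z}}$ — is a routine diagram chase, and I expect the whole proof to be short once the unit issue is dispatched; indeed this is standard and one may alternatively just cite \cite[Proposition 1.2]{Sh} as the excerpt already does.
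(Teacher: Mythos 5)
The paper does not actually prove this lemma---it simply cites \cite[Proposition 1.2]{Sh}---so the only thing to assess is whether your argument is sound, and the step you yourself single out as ``the crux'' is where it breaks. Your claim that ``two generators of the same ideal that are each $\equiv$ a unit times the other become identified under the multiplicative-congruence relation $\modm$'' is false: if $\beta=u\alpha$ with $u\in\O_K^\times$, then $\beta/\alpha=u$, and $u\equiv 1\modm\m$ holds only when $u\equiv 1\bmod\m$. The multiplicative congruence relaxes nothing about units; it only allows $\alpha\in K^\times$ rather than $\O_K$. Concretely, take $K=\mb{Q}(\sqrt{-7})$ and $\m=(3)$: then $(\alpha)=(-\alpha)$ as ideals but $\alpha\not\equiv-\alpha\bmod 3$, so the assignment $\alpha\O_K\mapsto\alpha\bmod\m$ does not descend to $P_K(\m)/P_{K,1^\times}(\m)\to(\O_K/\m)^\times$. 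What your construction actually proves is
$$P_K(\m)/P_{K,1^\times}(\m)\;\cong\;(\O_K/\m)^\times\big/\operatorname{im}\!\big(\O_K^\times\to(\O_K/\m)^\times\big),$$
since $P_{K,1^\times}(\m)$ consists exactly of the ideals $(\alpha)$ with $\alpha\equiv u\modm\m$ for \emph{some} unit $u$. The clean statement $K_\m/K_{\m,1}\cong(\O_K/\m)^\times$ holds at the level of elements (which is what \cite[Proposition 1.2]{Sh} records); passing to principal \emph{ideals} forces the quotient by $\operatorname{im}(\O_K^\times)$.

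To repair the argument you should either prove the lemma in the corrected form above, or observe that the discrepancy is harmless for the paper's purposes: for imaginary quadratic $K$ with $K\neq\mb{Q}(i),\mb{Q}(\sqrt{-3})$ one has $\O_K^\times=\{\pm 1\}$, whose image in $(\O_K/\m)^\times$ lies inside $G_{\mb{Z}}$ (the classes congruent to rational integers), so both $(\O_K/\m)^\times$ and $G_{\mb{Z}}$ get divided by the same subgroup and the identification of $P_{K,\mb{Z}}(\m)/P_{K,1^\times}(\m)$ with (the image of) $G_{\mb{Z}}$ survives. For $K=\mb{Q}(i)$ or $\mb{Q}(\sqrt{-3})$ the extra units $i$, $\zeta_6$ need not be congruent to rational integers mod $\m$, and the two assertions of the lemma must both be read modulo $\operatorname{im}(\O_K^\times)$. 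The rest of your proposal (surjectivity, the computation of the kernel, and the tracing of $P_{K,\mb{Z}}(\m)$ to $G_{\mb{Z}}$) is routine and correct once this is fixed.
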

\noindent Then we can observe that $\eta_{\mb{Z}}$ can be recover from $\psi$ by
$$a\mapsto \frac{\psi(a\O_K)}{a^\ell}\quad (a\in\mb{Z},\ \gcd(a,\Nm(\m))=1).$$ 

\begin{definition}
	For any Hecke character $\psi\in X_K^\ell(\m)$ we define $f_{\psi}(\tau)=\sum_{n\in \mb{N}} a_nq^n$ by
	$$f_{\psi}(\tau) := \sum_{\a\text{ integral}} \psi(\a)q^{\Nm(\a)} .$$
\end{definition}
\noindent In this definition we tacitly extended $\psi$ by $0$ for all fractional ideals of $K$ which are not prime to $\m$.

Let $\chi_K$ be the quadratic character of conductor $|d_K|$. 
\begin{theorem}[Hecke, Shimura] $f_{\psi}$ is a Hecke eigenform of weight $\ell+1$, level $N=\mc{N}(\m) |d_K|$ and nebentypus character $\varepsilon=\chi_K\eta_{\mb{Z}}$:
	$$f_{\psi} \in \mc{S}_{\ell+1}(\Gamma_0(N), \chi_K\eta_{\mb{Z}}).$$
	Moreover, $f_{\psi}$ is a newform if and only if $\m$ is the conductor of $\psi$.	
\end{theorem}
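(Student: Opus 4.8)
The plan is to follow the classical argument of Hecke and Shimura, establishing the modularity of $f_\psi$ by verifying the transformation behavior under the generators of $\Gamma_0(N)$ together with the prescribed nebentypus. First I would express $f_\psi$ as a theta-type series: grouping integral ideals $\a$ by their ideal class in $Cl(\O_K)$ and writing each class as a lattice, the generating function $\sum_{\a} \psi(\a) q^{\Nm(\a)}$ becomes a finite linear combination of theta series attached to positive definite binary quadratic forms with a spherical (harmonic) polynomial of degree $\ell$ built from the $\infty$-type, twisted by the finite-order character $\phi$ coming from the decomposition $\psi = \phi \cdot \eta_{\mathbb Z} \cdot (\text{power map})$. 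Concretely, if $\mathfrak b$ represents a class and $\psi$ restricted to that class is a constant times $\alpha \mapsto \alpha^\ell$ on generators, then $f_\psi$ is a combination of series of the shape $\sum_{\lambda \in \mathfrak b} \psi(\lambda)\, q^{\Nm(\lambda)/\Nm(\mathfrak b)}$, each of which is a classical theta series with harmonic polynomial $P(\lambda) = \lambda^\ell$ (viewing $K \hookrightarrow \mathbb C$, this is harmonic of degree $\ell$ for the quadratic form $\Nm$).

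The core step is then the transformation law for such theta series. I would invoke the general theory of theta series with harmonic polynomials (as in the textbooks \cite{Iw} or \cite{Zabook}): a theta series $\theta_{Q,P}(\tau) = \sum_{v} P(v) q^{Q(v)}$ attached to a positive definite even integral quadratic form $Q$ of rank $2$ and a spherical polynomial $P$ of degree $\ell$ is a modular form of weight $\ell + 1$ on $\Gamma_0(4\det Q \cdot (\text{level factors}))$ with a quadratic nebentypus determined by the discriminant of $Q$. Summing over the ideal classes and tracking how the conductor $\mathfrak m$ and the discriminant $d_K$ enter, one finds the level is exactly $N = \Nm(\mathfrak m)\,|d_K|$ and the character is $\chi_K \eta_{\mathbb Z}$, where the $\chi_K$ factor is the genus character from the discriminant of the norm form and the $\eta_{\mathbb Z}$ factor records the action on the rational residues modulo $\mathfrak m$. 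The Hecke eigenform property follows from multiplicativity: $\psi$ is completely multiplicative on ideals prime to $\mathfrak m$, so $a_n = \sum_{\Nm(\a) = n}\psi(\a)$ satisfies the Hecke recursions $a_p a_{p^k} = a_{p^{k+1}} + \varepsilon(p)\, p^{\ell} a_{p^{k-1}}$ by splitting primes $p$ in $K$ into split, inert and ramified cases and computing directly.

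For the newform assertion, I would argue in both directions. If $\psi$ has conductor strictly dividing $\mathfrak m$, say $\psi$ is induced from $\psi'$ of conductor $\mathfrak m' \mid \mathfrak m$, then $f_\psi$ visibly arises from $f_{\psi'}$ by the standard degeneracy maps (summing over ideals adds back the primes dividing $\mathfrak m/\mathfrak m'$), so $f_\psi$ lies in the old subspace and is not new. Conversely, if $\mathfrak m$ is the exact conductor, one shows $f_\psi$ cannot be old by a local argument: an oldform would force an Euler factor at some prime $p \mid N$ to degenerate, which would in turn force the local component of $\psi$ at the corresponding prime of $K$ to be unramified, contradicting minimality of $\mathfrak m$; alternatively one checks that $f_\psi$ is a Hecke eigenform for all $T_p$ including $p \mid N$ with the "correct" eigenvalues only when $\mathfrak m$ is exact, and a newform in $\mathcal S_{\ell+1}(\Gamma_0(N),\varepsilon)$ is characterized among eigenforms this way. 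Since \cite{Sc} already carries out precisely this analysis in the weight-$3$, rational-coefficient case we care about, I would cite that reference for the details and only indicate the structure of the argument here. The main obstacle I expect is the bookkeeping in the second step — matching the level and nebentypus exactly, especially disentangling the ramified-prime contributions to $\chi_K\eta_{\mathbb Z}$ from the discriminant of the norm form versus the conductor $\mathfrak m$ — which is where sign and index errors creep in; everything else is essentially formal once the theta-series dictionary is in place.
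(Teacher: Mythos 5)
The paper does not actually prove this statement: it is quoted as a classical theorem of Hecke and Shimura and used as a black box, with the surrounding structure (the splitting of $\psi$ into $\phi$ and $\eta_{\mb{Z}}$, the rationality criteria) imported from Sch\"{u}tt \cite{Sc}. So there is no in-paper argument to compare against, and what you have written is an outline of the standard theta-series proof rather than an alternative to anything. Your outline is the right one, and it is consistent with the machinery the paper does develop: the decomposition of $f_{\psi}$ into partial sums over ideal classes, each realized as a binary theta series with the degree-$\ell$ harmonic polynomial $\lambda\mapsto\lambda^{\ell}$, is exactly what Lemma \ref{L:sumpartial} and Proposition \ref{P:CM3theta} carry out for $\ell=2$. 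Two places where your sketch is thinner than a proof deserve flagging. First, the level of a binary theta series attached to a form of discriminant $D$ is $|D|$ (the smallest $N$ with $NA^{-1}$ even integral), not $4\det Q$; showing the total level is exactly $\Nm(\m)\,|d_K|$ rather than a proper multiple requires the conductor--discriminant bookkeeping you explicitly defer, and your description of $\chi_K$ as a ``genus character'' conflates the Kronecker character of the field with characters of the class group. Second, the implication ``$\m$ is the exact conductor $\Rightarrow$ $f_{\psi}$ is new'' is the genuinely hard half: the Euler-factor degeneration heuristic is correct in spirit but needs Atkin--Lehner--Li theory (or the conductor formula for automorphic induction) to become a proof, whereas the converse direction you give is essentially complete. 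Since the theorem functions in the paper purely as a citation, deferring these points to Hecke, Shimura, and \cite{Sc} is reasonable, but be aware that they are where the real content lies.
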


We say $\psi$ or $f_{\psi}$ has rational (or real) coefficients if all Fourier coefficients $a_n$ are rational (or real).
Such condition will impose very strong restriction on the characters $\phi, \eta_{\mb{Z}}$ and the (imaginary quadratic) field $K$.
But let us first mention (\cite[Corollary 1.2]{Sc}) that if a newform $f$ has real coefficients and its weight $k$ is odd, then $f$ has {\em complex multiplication} by its nebentypus $\varepsilon$, that is,
$$f= f\otimes \varepsilon = \sum_{n\in\mb{N}} \varepsilon(n)a_nq^n.$$ 

Suppose that $f_{\psi}$ has real coefficients, then the character $\eta_{\mb{Z}}$ is automatically determined. 
More precisely, if $\ell$ is odd, then $\eta_{\mb{Z}}=\chi_K$ so $\varepsilon$ is trivial;
if $\ell$ is even, then $\eta_{\mb{Z}}=1$ and thus $\varepsilon = \chi_K$ (\cite[Corollary 1.5]{Sc}).
So the only freedom left is the choice of the character $\phi: Cl(\O) \to \mb{C}^\times$ if $\m = (m)$. 
If we further assume that $f_{\psi}$ has rational coefficients, then \cite[Proposition 3.1]{Sc} says that $e_K \mid \ell$ where $e_K$ is the exponent of the class group $Cl(\O_K)$. We remark that in general $e_{\O} \nmid \ell$ where $e_{\O}$ is the exponent of $Cl(\O)$.

Conversely, suppose that $e_K \mid \ell$ and $\a\in I_K(m)$ has order $n$ in $Cl(\O_K)$ with $\a^{n} = \alpha \O_K$. 
For each character $\phi: Cl(\O) \to \mb{C}^\times$, we define a Hecke character $\Phi \in X_K^\ell(m)$ by
\begin{equation} \label{eq:Phi} \Phi(\a) = \phi([\a \cap \O]) {\alpha}^{\ell/n}. \end{equation}
In this case $f_{\Phi}$ is also denoted by $f_{\phi,\ell}$.
We recall an elementary but useful criterion.
\begin{lemma}[{\cite[Lemma 2.2]{Sc}}] Let $\psi$ be a Hecke character of an imaginary quadratic field $K$. Then $f_{\psi}$ has rational coefficients if and only if $\Img \psi \subset \O_K$.
\end{lemma}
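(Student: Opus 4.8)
The plan is to prove both implications directly from the definition $f_{\psi}(\tau)=\sum_{\a}\psi(\a)q^{\Nm(\a)}$. The easy direction is that $\Img\psi\subset\O_K$ forces rational coefficients: for a fixed $n\in\mb{N}$, the coefficient $a_n$ is a finite sum $\sum_{\Nm(\a)=n}\psi(\a)$ of elements of $\O_K$, hence lies in $\O_K$; but $a_n$ is also a sum over the complex-conjugate ideals, and since $\Nm(\br\a)=\Nm(\a)$ while $\psi(\br\a)=\br{\psi(\a)}$ for CM Hecke characters of $K$ (the $\infty$-type is real on the conjugate), the sum is invariant under complex conjugation, so $a_n\in\O_K\cap\mb{R}=\O_K\cap\mb{Q}=\mb{Z}$. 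Thus all $a_n$ are rational (indeed integral).

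For the converse, I would argue contrapositively: if some value $\psi(\a)\notin\O_K$, produce an irrational Fourier coefficient. First reduce to prime ideals: $\psi$ is multiplicative on the monoid of integral ideals, and $\O_K$ is generated as a monoid under multiplication by prime elements, so if $\Img\psi\not\subset\O_K$ then already $\psi(\mf{p})\notin\O_K$ for some prime ideal $\mf{p}$. Now look at the coefficient $a_p$ where $p=\Nm(\mf{p})$. If $p$ splits in $K$, $p=\mf{p}\br{\mf{p}}$, then $a_p=\psi(\mf{p})+\psi(\br{\mf{p}})=\psi(\mf{p})+\br{\psi(\mf{p})}=2\Re\psi(\mf{p})$, and $a_{p^2}$ picks up $\psi(\mf{p}^2)+\psi(\mf{p}\br{\mf{p}})+\psi(\br{\mf{p}}^2)$; combining these with the fact that $\psi(\mf{p})^{h}=\alpha^{\ell}$ for $\alpha$ a generator of $\mf{p}^{h}$ (where $h$ is the order of $\mf{p}$ in $Cl(\O_K)$), one sees that $\psi(\mf{p})$ is an algebraic number whose minimal polynomial over $\mb{Q}$ has degree $>1$ precisely when $\psi(\mf{p})\notin\O_K$ (using $\psi(\mf{p})\br{\psi(\mf{p})}=\Nm(\mf{p}^{?})$-type norm relations to control the conjugates), forcing $a_p=2\Re\psi(\mf{p})\notin\mb{Q}$ or else $\psi(\mf{p})$ would be rational, hence in $\mb{Z}\subset\O_K$. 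If $p$ is inert or ramified a similar but shorter computation applies since then $a_p=\psi(\mf{p})$ or $a_p=\psi(\mf{p})+\psi(\br{\mf p})$ directly exhibits the value.

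The main obstacle I anticipate is the bookkeeping in the converse direction: one must rule out the possibility that irrational values $\psi(\mf{p})$ at individual primes conspire so that \emph{all} the $a_n$ remain rational. The clean way around this is to exploit that $a_p=\psi(\mf p)+\overline{\psi(\mf p)}$ and $a_{p^2}-\chi_K(p)p^{\ell}=\psi(\mf p)^2+\overline{\psi(\mf p)}^2$ (for split $p$) together determine $\psi(\mf p)$ up to conjugation over $\mb{Q}$; since $\psi(\mf p)\overline{\psi(\mf p)}$ is an integer (a power of $p$ up to the class-group exponent), rationality of $a_p$ then forces $\psi(\mf p)\in\O_K$. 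So the real content is the norm identity $\psi(\mf p)\overline{\psi(\mf p)}\in\mb{Z}$, which follows because $\psi\bar\psi$ factors through the norm map and the $\infty$-type contributes $\Nm(\mf p)^{\ell}$; I would state this as a preliminary observation before the main dichotomy. Everything else is routine algebra once this is in place.
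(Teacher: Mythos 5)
The paper does not actually prove this lemma --- it is quoted verbatim from Sch\"utt \cite[Lemma 2.2]{Sc} --- so there is no in-paper argument to compare against; I am judging your proposal on its own. Both of your directions have a genuine gap, and the two gaps are different. In the forward direction everything rests on the identity $\psi(\br{\a})=\br{\psi(\a)}$, which you assert with only the remark that the $\infty$-type is well behaved under conjugation. That identity is \emph{not} a consequence of $\Img\psi\subset\O_K$: what one actually gets, for a split prime $p=\mf{p}\br{\mf{p}}$, is $\psi(\mf{p})\psi(\br{\mf{p}})=\psi(p\O_K)=\eta_{\mb{Z}}(p)\,p^{\ell}$ together with $|\psi(\mf{p})|^2=p^{\ell}$, hence $\psi(\br{\mf{p}})=\eta_{\mb{Z}}(p)\,\br{\psi(\mf{p})}$ where $\eta_{\mb{Z}}(p)$ is a root of unity in $\O_K$ that may well be $-1$; then $a_p=\psi(\mf p)-\br{\psi(\mf p)}$ is purely imaginary. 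Concretely, twisting a ``good'' $\psi$ by a quadratic ray-class character modulo a split prime whose restriction to $\mb{Z}$ is nontrivial keeps $\Img\psi\subset\O_K$ but produces non-real $a_p$. So this implication only holds once $\eta_{\mb{Z}}$ is normalized as in Sch\"utt's Corollary 1.5 (which the characters $\Phi$ of \eqref{eq:Phi} automatically satisfy, since $\Phi(\br\a)=\br{\Phi(\a)}$ by construction); your proof must either add that hypothesis or verify the conjugation identity, and as written it does neither.

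In the converse direction your key dichotomy --- ``either $a_p\notin\mb{Q}$ or $\psi(\mf p)$ is rational, hence in $\mb{Z}$'' --- is false. From $a_p,a_{p^2}\in\mb{Q}$ you correctly get that $\psi(\mf p)$ satisfies a monic integral quadratic $X^2-a_pX\pm p^{\ell}$, but that only shows $[\mb{Q}(\psi(\mf p)):\mb{Q}]\le 2$; it is perfectly consistent with $a_p\in\mb{Q}$ that $\psi(\mf p)$ generate a real quadratic field, or an imaginary quadratic field \emph{different} from $K$ (e.g.\ $\psi(\mf p)=ip^{\ell/2}$ gives $a_p=0$, $a_{p^2}=p^{\ell}$). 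The missing ingredient, which you gesture at but never deploy, is the relation $\psi(\mf p)^{h}=\alpha^{\ell}\cdot(\text{root of unity})$ with $\mf p^{h}=(\alpha)$: if $\psi(\mf p)$ lay in $\mb{R}$ or in a quadratic field other than $K$, then some power of $\alpha/\br{\alpha}$ would be a root of unity, forcing $\mf p^{n}=\br{\mf p}^{\,n}$ for some $n\ge1$, which contradicts unique factorization of ideals since $\mf p\neq\br{\mf p}$ for split $p$. Without that step the argument does not conclude $\psi(\mf p)\in\O_K$. (The reduction to prime ideals and the observation $\psi(\mf p)\br{\psi(\mf p)}=\Nm(\mf p)^{\ell}$ are fine, although the product is exactly $p^{\ell}$, not merely ``a power of $p$ up to the class-group exponent''.)
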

\noindent In particular, if the image of $\phi$ is contained in $\O_K^\times$, then we see from \eqref{eq:Phi} that $f_{\Phi}$ has rational coefficients.
For our application, we summarize the discussion so far in the next corollary. The ``moreover" part is due to the construction in \cite[Lemma 4.1 and Proposition 5.1]{Sc}.
\begin{corollary}\label{C:rf3} Let $K$ be an imaginary quadratic field and $\ell$ is a multiple of the exponent $e_K$ of $Cl(\O_K)$.
Let $D=m^2 d_K$ for some $m\in \mb{N}$. Then any character $\phi: Cl(D)\to \O_K^\times$ yields a Hecke character $\Phi \in X_{K}^\ell(m)$ such that $f_{\Phi}=f_{\phi,\ell}$ has rational coefficients.
	Moreover, if $\ell$ is even, any $f_{\psi}$ of weight $\ell+1$ with rational coefficients arises this way.
\end{corollary}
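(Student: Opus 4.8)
The plan is to assemble the statement from the pieces already quoted: the Hecke--Shimura theorem, the rationality criterion of Sch\"utt (\cite[Lemma 2.2]{Sc}), and the analysis of the character $\eta_{\mb{Z}}$ for forms with real coefficients. First I would set up the Hecke character $\Phi$ explicitly. Given $\phi\colon Cl(D)\to\O_K^\times$, use the identification $Cl(D)\cong Cl(\O)\cong I_K(m)/P_{K,\mb{Z}}(m)$ from Theorem \ref{T:formclass} and Lemma \ref{L:ClO} so that $\phi$ is a character of $I_K(m)/P_{K,\mb{Z}}(m)$. For an ideal $\a\in I_K(m)$ of order $n$ in $Cl(\O_K)$, write $\a^n=\alpha\O_K$ with $\alpha\in K^\times$; since $e_K\mid\ell$, we have $n\mid\ell$, so $\alpha^{\ell/n}$ is well defined as an element of $K^\times$, and I set $\Phi(\a)=\phi([\a\cap\O])\,\alpha^{\ell/n}$ as in \eqref{eq:Phi}. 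I would then check this is independent of the choice of $\alpha$ (two choices differ by a root of unity in $\O_K^\times$, raised to the power $\ell/n$, which is absorbed since the ambiguity lies in $P_{K,1^\times}$ only up to that root of unity) and that it is multiplicative, hence a genuine homomorphism $I_K(m)\to\mb{C}^\times$. One verifies the $\infty$-type condition: for $\alpha\in K^\times$ with $\alpha\equiv 1\modm (m)$, the ideal $\alpha\O_K$ is principal and trivial in the class group, and $\Phi(\alpha\O_K)=\alpha^\ell$. Thus $\Phi\in X_K^\ell(m)$.

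Next I would invoke the Hecke--Shimura theorem quoted above to conclude $f_\Phi\in\mc{S}_{\ell+1}(\Gamma_0(N),\chi_K\eta_{\mb{Z}})$ with $N=\mc{N}((m))|d_K|=m^2|d_K|=|D|$ (up to sign). To get rational coefficients, apply \cite[Lemma 2.2]{Sc}: $f_\Phi$ has rational Fourier coefficients iff $\Img\Phi\subset\O_K$. But $\Img\phi\subset\O_K^\times\subset\O_K$ and the factors $\alpha^{\ell/n}$ lie in $\O_K$ whenever $\a$ is an \emph{integral} ideal — indeed if $\a$ is integral then so is $\a^n=\alpha\O_K$, forcing $\alpha\in\O_K$, hence $\alpha^{\ell/n}\in\O_K$. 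Since $f_\Phi$ is defined as a sum over integral ideals, every coefficient is a sum of products of elements of $\O_K$, so $\Img\Phi\cap\{\text{integral }\a\}\subset\O_K$ and the criterion is met. This gives the first assertion, with $f_\Phi=f_{\phi,\ell}$ by the naming convention introduced right after \eqref{eq:Phi}.

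For the ``moreover'' part (the case $\ell$ even), I would argue that every $f_\psi$ of weight $\ell+1$ with rational coefficients is of this form. Since the weight $\ell+1$ is odd, \cite[Corollary 1.2]{Sc} forces CM by the nebentypus, and the discussion preceding the corollary shows that for $\ell$ even one has $\eta_{\mb{Z}}=1$ and $\varepsilon=\chi_K$; so the only datum still free is the character $\phi$ on the class group of some order $\O$ of discriminant $D=m^2 d_K$. Rationality of $f_\psi$ forces, via \cite[Proposition 3.1]{Sc}, that $e_K\mid\ell$, and Lemma \cite[2.2]{Sc} forces $\Img\psi\subset\O_K$; restricting $\psi$ to principal ideals in $P_{K,1^\times}$ recovers the $\infty$-type $\alpha\mapsto\alpha^\ell$, and evaluating on representatives of $Cl(\O_K)$ as in the construction of \cite[Lemma 4.1 and Proposition 5.1]{Sc} shows $\psi$ must have the shape \eqref{eq:Phi} for a uniquely determined $\phi\colon Cl(D)\to\O_K^\times$. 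Hence $f_\psi=f_{\phi,\ell}$ for that $\phi$.

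The main obstacle I anticipate is the well-definedness and multiplicativity of $\Phi$ in \eqref{eq:Phi}: one must carefully track how the choice of the generator $\alpha$ of $\a^n$, and the choice of $n$ (the exact order versus a multiple), interact with the power $\ell/n$, and confirm that different integral representatives of a class in $Cl(D)$ give the same value modulo the relations defining $X_K^\ell(m)$. This is exactly the content of \cite[Lemma 4.1 and Proposition 5.1]{Sc}, so the cleanest route is to cite that construction verbatim rather than reprove it, and to spend the effort only on confirming that the image lands in $\O_K^\times$ (for $\phi$) and $\O_K$ (for the $\alpha^{\ell/n}$ factors on integral ideals), which is what feeds Sch\"utt's rationality criterion.
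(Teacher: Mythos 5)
Your proposal is correct and follows essentially the same route as the paper: the paper's proof is precisely a summary of the preceding discussion (the formula \eqref{eq:Phi}, the Hecke--Shimura theorem, and Sch\"utt's rationality criterion \cite[Lemma 2.2]{Sc}), with the ``moreover'' part delegated to \cite[Lemma 4.1 and Proposition 5.1]{Sc} exactly as you do. Your only shaky step --- the parenthetical claim that the root-of-unity ambiguity in the choice of $\alpha$ with $\a^n=\alpha\O_K$ is automatically ``absorbed'' --- is not quite right as stated (for $\ell/n$ odd and $u=-1$ the sign genuinely matters), but since you explicitly defer the well-definedness of \eqref{eq:Phi} to Sch\"utt's construction, this does not affect the argument.
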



\begin{caution} \label{c:newform} (1). In general, the $f_{\phi,\ell}$ in Corollary \ref{C:rf3} may not be a newform. See Proposition \ref{P:tonew} for examples.
	(2). For the case $\ell=2$ that we mainly concern, the fact that $e_K \mid \ell$ restricts us to imaginary quadratic fields whose class group consists only of $2$-torsion. 
	However, higher torsion may appear in the group $Cl(\O)\cong Cl(D)$. 
	But it follows from Remark \ref{r:smalllist} that only $\mb{Z}_2^g$ $(g=0,1,2)$ can appear as $Cl(D)$ in our application if Conjecture \ref{conj:HDN} holds.
\end{caution}

The modular forms are related to $L$-functions via the {\em Mellin transform}.
The {\em Hecke $L$-function} of $\psi$ is the Mellin transform of $f_{\psi}$:
	$$L(\psi,s) = \sum_{\a\text{ integral}} \frac{\psi(\a)}{\Nm(\a)^s}.$$
\end{definition}
\noindent To any ideal $\a$ of $\O$, we associate a {\em partial} Hecke series
$$Z(\ell,\a,s) = \frac{1}{w} \sum_{\lambda\in \a}' \frac{\lambda^\ell}{(\lambda \br{\lambda})^s},$$
where $w$ is the number of units in $\O$. Recall that $w$ is equal to $6$ or $4$ for $D=-3$ or $-4$, and to $2$ otherwise.

The following lemma is a straightforward variation of the well-known statement for maximal orders.
\begin{lemma} \label{L:sumpartial} Let $\O$ be an order of $K$ of conductor $m$, and $\psi\in X_K^\ell(m)$. Then
	$$L(\psi,s) = \sum_{[\a] \in Cl(\O)} \frac{\Nm(\a\O_K)^s}{\psi(\a\O_K)}  Z(\ell,\a,s).$$
\end{lemma}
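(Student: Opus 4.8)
The plan is to decompose the global Hecke $L$-function as a sum over ideal classes, then in each class rewrite the sum over integral ideals in a fixed class as a sum over lattice points in a representative fractional ideal, keeping careful track of the normalization by units. This is the standard "unfolding" argument; the only novelty is the bookkeeping needed to pass from the maximal order $\O_K$ to the non-maximal order $\O$ of conductor $m$, for which we exploit the isomorphism $I(\O,m)\cong I_K(m)$ recalled before Lemma~\ref{L:ClO} (given by $\a\mapsto\a\O_K$, with inverse $\a\mapsto\a\cap\O$).

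\textbf{Step 1: Partition by ideal classes.} Starting from the definition $L(\psi,s)=\sum_{\b\text{ integral}}\psi(\b)\Nm(\b)^{-s}$, where the sum runs over integral ideals of $\O_K$ prime to $\m=(m)$, I would group the terms according to the class $[\b]\in I_K(m)/P_{K,\mb{Z}}(m)\cong Cl(\O)$. Fixing for each class a representative $\a$ (viewed as an $\O$-ideal, with $\a\O_K$ its companion in $I_K(m)$), every integral ideal $\b$ in the inverse class of $[\a\O_K]$ can be written as $\b=\lambda(\a\O_K)^{-1}$ for a suitable $\lambda$; more precisely the integral ideals in the class of $[\a\O_K]$ itself are exactly $\lambda\,\overline{\a\O_K}/\Nm(\a\O_K)$ as $\lambda$ ranges over the nonzero elements of $\a\O_K$ modulo units, but it is cleaner to index so that $\b(\a\O_K)=(\lambda)$ for $\lambda$ in $\a\O_K$. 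Then $\Nm(\b)=\Nm(\lambda)/\Nm(\a\O_K)=\lambda\br\lambda/\Nm(\a\O_K)$ and, since $\lambda\O_K\in P_{K,1^\times}$-coset structure is handled by $\psi$, one has $\psi(\b)=\psi(\lambda\O_K)/\psi(\a\O_K)$.

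\textbf{Step 2: Reinsert the $\infty$-type and recognize the partial Hecke series.} For $\lambda\in\a\O_K$ prime to $\m$, by the defining property of a Hecke character of $\infty$-type $\ell$ we have $\psi(\lambda\O_K)=\lambda^\ell\,\eta_{\mb Z}(\dots)$ up to the finite-order correction, but after restricting $\lambda$ to run over $\a\O_K$ (which we may replace by $\a$ itself since $\a\O_K\cap\O$ differs from $\a$ only in a controlled way — here I would invoke that $\a$ is prime to $m$, hence $\a\O_K$ and $\a$ have the same nonzero elements up to the order/maximal-order comparison) the character value becomes $\lambda^\ell$ on the nose. Substituting into Step~1 gives
\begin{align*}
L(\psi,s)=\sum_{[\a]\in Cl(\O)}\frac{\Nm(\a\O_K)^s}{\psi(\a\O_K)}\cdot\frac{1}{w}\sum_{\lambda\in\a}{}'\frac{\lambda^\ell}{(\lambda\br\lambda)^s},
\end{align*}
where the factor $1/w$ appears because the sum over integral ideals $\b$ counts each $\lambda$ once but the sum over $\lambda\in\a$ counts it $w$ times (once per unit), $w=|\O^\times|$. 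The inner sum is precisely $Z(\ell,\a,s)$, completing the proof.

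\textbf{Main obstacle.} The delicate point is matching the element-sums over the $\O$-ideal $\a$ with the ideal-sums over the $\O_K$-ideal $\a\O_K$: a priori $\a\subsetneq\a\O_K$, so their sets of nonzero elements differ, yet the partial series $Z(\ell,\a,s)$ is written as a sum over $\lambda\in\a$. The resolution uses that $\a$ is chosen prime to $m$, so that $\a=\a\O_K\cap\O$ and the map $\a\O_K\mapsto\a$ is a bijection on ideals preserving norms and, crucially, that $\a\O_K$ is generated over $\O_K$ by elements of $\a$; one must verify that summing $\lambda^\ell(\lambda\br\lambda)^{-s}$ over $\lambda\in\a$ versus over $\lambda\in\a\O_K$ gives the same value — this is false for a general ideal but holds here because $Z(\ell,\a,s)$ depends only on the $\O_K$-homothety class of the lattice, and $\a$ and $\a\O_K$ are homothetic as lattices when $\a$ is a proper $\O$-ideal prime to the conductor. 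Pinning down this homothety (equivalently, that proper $\O$-ideals prime to $m$ are in norm-preserving bijection with $\O_K$-ideals prime to $m$ via an actual scaling, not just an abstract group iso) is where the argument needs care; everything else is the classical computation, which is why the statement is billed as "a straightforward variation of the well-known statement for maximal orders."
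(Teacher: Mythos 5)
Your proposal is correct and follows essentially the same unfolding argument as the paper: partition the $L$-series by the classes of $Cl(\O)\cong I_K(m)/P_{K,\mb{Z}}(m)$, index the integral ideals in each class by a generator $\lambda$ of $\a\a'$ for a fixed representative $\a$ in the inverse class, and divide by $w$ to account for units. The ``main obstacle'' you describe is handled more directly in the paper: since $\a$ is taken to be an $\O$-ideal and $\a'\subset\O$, the generator $\lambda$ of the $\O$-ideal product $\a\a'$ automatically lies in $\a$ itself, so the lattice sum defining $Z(\ell,\a,s)$ is over $\a$ from the start and no comparison of $\a$ with $\a\O_K$ (homothety or otherwise) is needed.
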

\begin{proof}  Recall the isomorphism \eqref{eq:isoClO}. We shall consider the partition of $I_K(m)$ by the classes in $Cl(\O)$.
	For each $[\a_0] \in Cl(\O)$, we choose some $\a\in [\a_0]^{-1}$ so that $\a\a' = \lambda \O$ for any $\a'\in [\a_0]$.
	Then $\lambda\in \a$ and we have that 
	\begin{align*}\sum_{\a'\in [\a_0]} \frac{\psi(\a'\O_K)}{\Nm(\a' \O_K)^s} =  \sum_{\lambda\in (\a\setminus \{0\})/\O^\times} \frac{\psi(\a^{-1}\O_K) \psi(\lambda)}{\Nm(\a\O_K)^{-s} \Nm(\lambda)^s}   =   \frac{\Nm(\a\O_K)^s}{\psi(\a\O_K)}  \frac{1}{w} \sum_{\lambda\in \a}' \frac{\psi(\lambda)}{\Nm(\lambda)^s}.
	\end{align*}
	So \begin{align*}L(\psi,s) = \sum_{[\a_0] \in Cl(\O)}\sum_{\a'\in [\a_0]} \frac{\psi(\a'\O_K)}{\Nm(\a' \O_K)^s}  = \sum_{[\a] \in Cl(\O)}  \frac{\Nm(\a\O_K)^s}{\psi(\a\O_K)}  Z(\ell,\a,s).
	\end{align*}
\end{proof}

\begin{remark} \label{r:extend} This implies that each summand $\frac{\Nm(\a\O_K)^s}{\psi(\a\O_K)}  Z(\ell,\a,s)$ does not depend on the choice of the representative $\a$ in $Cl(\O)$.
	By naturally extending the definition of $\psi$, we may even allow $\a$ not prime to $m$.
	Indeed,	if $\a$ is not prime to $m$, by the first isomorphism in Lemma \ref{L:ClO} we can always choose an ideal $\mf{b}$ prime to $m$ representing the same element in $Cl(\O)$ as $\a$. 
	Then $\mf{b} = \a (x)$ for some $x$, so we define $\psi(\a\O_K) = \psi(\mf{b}\O_K)x^{-\ell}$. We have that
	\begin{align*}	\frac{\Nm(\mf{b}\O_K)^{s}}{\psi(\mf{b}\O_K)} Z(\ell,\mf{b},s) &= \frac{\Nm(\a\O_K)^{s}\Nm(x)^s}{\psi(\a\O_K)\psi(x)} Z(\ell,\a(x),s) \\
	&= \frac{\Nm(\a\O_K)^{s}}{\psi(\a\O_K)} \frac{\Nm(x)^s}{x^\ell} \sum_{\lambda\in\mf{a}} \frac{x^\ell}{(x\br{x})^s} \frac{\lambda^\ell}{(\lambda\br{\lambda})^s}  \\
	&= \frac{\Nm(\mf{a}\O_K)^{s}}{\psi(\mf{a}\O_K)} Z(\ell,\mf{a},s).
	\end{align*}
\end{remark}

\subsection{Generalized Theta Functions} \label{ss:theta}
Let $A=(a_{ij})$ be a symmetric positive definite matrix of rank $r$. The {\em Laplace operator} attached to $A$ is
$$\Delta_A = \sum_{i,j} a_{ij}^* \frac{\partial}{\partial x_i \partial x_j}, \quad \text{where } (a_{ij}^*)=A^{-1}.$$
A {\em spherical function} for $A$ is a homogeneous polynomial $P$ in $r$ variables such that 
$$\Delta_A P = 0.$$

Now we assume $A$ is integral and {\em even}, which means the diagonal entries of $A$ are even. So $n^{\t} A n$ is even for any $n\in\mb{Z}^r$.
Let $N$ be such that $NA^{-1}$ has the same properties.
\begin{definition} The generalized {\em theta function} attached to $A$ and $P$ is defined as
	$$\Theta_{A,P} (\tau) = \sum_{n\in \mb{Z}^r} P(n) \exp\Big(\frac{1}{2}n^{\t} A n \tau\Big).$$
\end{definition}
\noindent Clearly, $\Theta_{A,P}$ is linear w.r.t. $P$.

\begin{theorem}[{\cite[Theorem 10.9]{Iw}}] Let $P$ be a spherical function for $A$ of degree $v$. Then
	$$\Theta_{A,P}(\tau) \in \mc{M}_{v+r/2} (\Gamma_0(N),\chi_D),$$ 
where $D=(-1)^{r/2} |A|$ and $\chi_D$ is the Kronecker character. 
Moreover, if $v>0$, then $\Theta_{A,P}(\tau)$ is a cusp form.
\end{theorem}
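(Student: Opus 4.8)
The plan is to verify in turn the three ingredients of membership in $\mc{M}_{v+r/2}(\Gamma_0(N),\chi_D)$: holomorphy of $\Theta_{A,P}$ on the upper half-plane $\mb{H}$, the weight-$(v+r/2)$ transformation law under $\Gamma_0(N)$ with nebentypus $\chi_D$, and holomorphy (with vanishing constant term when $v>0$) at every cusp. I write $e(x):=e^{2\pi i x}$ and $q=e(\tau)$, so that the summand of $\Theta_{A,P}$ is $P(n)\,q^{n^{\t}An/2}$. Observe that $r$ is even here (otherwise $(-1)^{r/2}$ and $\chi_D$ would not make sense), so $v+r/2\in\mb{Z}$ and the automorphy factor $(c\tau+d)^{v+r/2}$ is single-valued; this spares us the metaplectic multiplier of the general half-integral-weight statement. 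Holomorphy is then immediate: on $\{\Img\tau\ge\delta>0\}$ one has $|q^{n^{\t}An/2}|=e^{-\pi\delta n^{\t}An}$, and since $A$ is positive definite $n^{\t}An\gg|n|^2$, which swamps the polynomial growth of $P$; so the series converges absolutely and locally uniformly and defines a holomorphic function of $\tau$.

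For the transformation law I would prove directly that $\Theta_{A,P}\bigl(\tfrac{a\tau+b}{c\tau+d}\bigr)=\chi_D(d)\,(c\tau+d)^{v+r/2}\,\Theta_{A,P}(\tau)$ for an arbitrary $\sm{a&b\\c&d}\in\Gamma_0(N)$. The case $c=0$ is invariance under $\tau\mapsto\tau+b$, which holds because $A$ is even: $n^{\t}An\in2\mb{Z}$ makes $q^{n^{\t}An/2}$ periodic. For $c\ne0$ we have $N\mid c$, hence $cA^{-1}$ is integral since $NA^{-1}$ is; writing $\tfrac{a\tau+b}{c\tau+d}=\tfrac ac-\tfrac{1}{c(c\tau+d)}$, splitting $\mb{Z}^r$ into residues $n\equiv u\pmod{c}$, and completing the square, each inner sum becomes a Gaussian with a polynomial prefactor to which Poisson summation applies. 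Here the hypothesis $\Delta_A P=0$, together with the homogeneity of $P$, is exactly what makes $P$ (essentially) an eigenfunction of the relevant Fourier transform, so that $\int_{\mb{R}^r}P(x)\,e\bigl(-\tfrac{x^{\t}Ax}{2c(c\tau+d)}\bigr)e(-x\cdot\eta)\,dx$ equals $(\det A)^{-1/2}$ times a power of $c(c\tau+d)$ times $P$ of a linear image of $\eta$ times a Gaussian in the dual form $A^{-1}$. Reassembling the double sum recovers $\Theta_{A,P}(\tau)$ with the automorphy factor $(c\tau+d)^{v+r/2}$ pulled out, and leaves behind a generalized Gauss sum
$$\sum_{u\bmod c}e\!\left(\tfrac{a}{2c}\,u^{\t}Au\right),$$
whose evaluation by Gauss-sum reciprocity, combined with the $(\det A)^{-1/2}$, collapses to the Kronecker symbol $\chi_D(d)$ with $D=(-1)^{r/2}|A|$.

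For the cusps I would note that the same Poisson computation applied with an arbitrary $\alpha\in\SL_2(\mb{Z})$ exhibits $\Theta_{A,P}\big|_{v+r/2}\alpha$ as a \emph{finite} $\mb{C}$-linear combination of shifted theta series $\sum_{n\in\mb{Z}^r}P'(n)\,e\bigl(\tfrac{1}{2h}Q'(n+\mathbf{v})\,\tau\bigr)$, with $h\in\mb{N}$, each $Q'$ positive definite, each $\mathbf{v}$ a rational characteristic, and $P'$ spherical for $Q'$ of the same degree $v$. Each such series is bounded as $\Img\tau\to\infty$ (holomorphy at the cusp), and its constant term is $\sum_{\,n:\;Q'(n+\mathbf{v})=0}P'(n)$: if $\mathbf{v}\notin\mb{Z}^r$ this index set is empty, while if $\mathbf{v}\in\mb{Z}^r$ positive-definiteness of $Q'$ forces the unique solution $n=-\mathbf{v}$, contributing $P'(0)$, which vanishes as soon as $\deg P'=v>0$. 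Hence $\Theta_{A,P}$ is a cusp form whenever $v>0$.

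The crux is the transformation law, and within it the precise evaluation of the Gauss sum $\sum_{u\bmod c}e(\tfrac{a}{2c}u^{\t}Au)$: showing that it assembles, together with $(\det A)^{-1/2}$, into exactly $\chi_D(d)$ is where all the arithmetic of $D=(-1)^{r/2}|A|$ and the level $N$ lives. This rests on the reciprocity law for matrix Gauss sums, which in the end reduces to the classical sign determination of the one-dimensional quadratic Gauss sum; the argument is not conceptually deep but it is the one genuinely delicate point. (The statement is of course \cite[Theorem 10.9]{Iw}, and the route above is the classical one going back to Schoeneberg and Ogg.)
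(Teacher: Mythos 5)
The paper does not prove this statement at all: it is imported verbatim as \cite[Theorem 10.9]{Iw}, so there is no in-paper argument to compare yours against. Your sketch is the standard classical proof (Hecke--Schoeneberg--Ogg, which is essentially also Iwaniec's route): absolute convergence from positive-definiteness, the $\Gamma_0(N)$ transformation law via residue classes mod $c$, Poisson summation, and the self-reproducing property of $P(x)e^{-\pi x^{\t}Ax}$ under Fourier transform (this is where $\Delta_A P=0$ and homogeneity enter), and the cusp analysis via theta series with rational characteristics. The outline is correct, and your observation that $r$ must be even for the statement as written is a good catch. Two points to tighten: (i) the entire arithmetic content --- that the Gauss sum $\sum_{u\bmod c}e\bigl(\tfrac{a}{2c}u^{\t}Au\bigr)$ combined with $(\det A)^{-1/2}$ collapses to $\chi_D(d)$ --- is asserted via ``Gauss-sum reciprocity'' rather than executed; you correctly identify it as the crux, but as written the proposal is a plan for that step, not a proof of it. (ii) In the cusp computation the shifted series should read $\sum_n P'(n+\mathbf{v})\,e\bigl(\tfrac{1}{2h}Q'(n+\mathbf{v})\tau\bigr)$ (polynomial and form evaluated at the same shifted lattice point); only then does the constant term equal $P'(0)$, which vanishes for $v>0$ by homogeneity --- with $P'(n)$ as you wrote it the constant term would be $P'(-\mathbf{v})$ and the argument would not close.
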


\begin{example} A positive definite quadratic form $Q=[a,b,c]$ corresponds to an even symmetric positive definite matrix $\sm{2a&b\\b&2c}$. Then 
	$$\theta_Q(\tau) := \Theta_{Q,1}(\tau) = \sum_{m,n\in\mb{Z}} q^{am^2 + b mn +cn^2}$$ 
	is the ordinary $\theta$-series attached to $Q$.
\end{example}

\begin{example} For two positive definite quadratic forms $Q=[a,b,c]$ and $P=[a',b',c']$ satisfying 
	$$\innerprod{Q,P}:=2ca'-bb'+2ac'=0,$$
 we define the theta function
	 $$\Theta_{Q,P}(\tau) = \sum_{m,n\in\mb{Z}} \frac{1}{2} (a' m^2 + b' mn +c' n^2) q^{am^2 + b mn +cn^2}.$$  
	It is a cusp form of weight $3$.	
For any quadratic form $Q=[a,b,c]$, we denote
$$\Theta_Q(\tau) := \sum_{m,n\in\mb{Z}} \frac{1}{2}(a m^2  - cn^2) q^{am^2 + b mn +cn^2}.$$
Note that $\Theta_{gQ,gP} = \Theta_{Q,P}$ for $g\in \SL_2(\mb{Z})$ but in general $\Theta_{gQ}\neq \Theta_{Q}$.
\end{example}

\begin{definition} The linear span of $\Theta_{Q,P}$ for all quadratic forms $P$ spherical for $Q$ is denoted by $\innerprod{\Theta_Q}$.
	The {\em theta kernel} of $Q$ consists of all $P$ such that $\Theta_{Q,P}=0$.
\end{definition}

\begin{lemma}\label{L:dim=1} If $Q$ is a p.d.q.f. of order at most $2$, then the dimension of $\innerprod{\Theta_{Q}}$ is at most $1$.
\end{lemma}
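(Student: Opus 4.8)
The plan is to turn the assertion into a rank–nullity count and then extract a nonzero element of the theta kernel from an orientation-reversing automorphism of $Q$. Write $Q=[a,b,c]$ and let $V_Q$ denote the space of binary quadratic forms $P=[\alpha,\beta,\gamma]$ spherical for $Q$. A degree-$2$ form $P$ is spherical for $Q$ exactly when $\innerprod{Q,P}=2c\alpha-b\beta+2a\gamma=0$, and this linear functional is nonzero because $a>0$; hence $V_Q$ is two-dimensional. The assignment $P\mapsto\Theta_{Q,P}$ is linear on $V_Q$, with image $\innerprod{\Theta_Q}$ and kernel the theta kernel of $Q$, so $\dim\innerprod{\Theta_Q}=2-\dim(\text{theta kernel})$. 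It therefore suffices to produce one nonzero $P\in V_Q$ with $\Theta_{Q,P}=0$.

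The first step is to convert the hypothesis into a symmetry. Since $[Q]$ has order at most $2$ in $Cl(D)$, we have $[Q]=[Q]^{-1}$, i.e. $Q$ is properly equivalent to $[a,-b,c]=Q\circ\sigma$ with $\sigma=\sm{1&0\\0&-1}$. Choosing $g\in\SL_2(\mb{Z})$ with $Q\circ g=Q\circ\sigma$ and setting $h:=g\sigma^{-1}$, we obtain $h\in\GL_2(\mb{Z})$ with $\det h=-1$ and $Q\circ h=Q$; writing $A=\sm{2a&b\\b&2c}$ for the matrix of $Q$, this says $h^{\t}Ah=A$. Next, the change of variables $n\mapsto hn$ in the defining series gives $\Theta_{B,R}=\Theta_{h^{\t}Bh,\,R\circ h}$ for every $h\in\GL_r(\mb{Z})$; taking $B=A$ and using $h^{\t}Ah=A$ yields $\Theta_{Q,P}=\Theta_{Q,P\circ h}$ for all $P$. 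A short computation with $h^{\t}Ah=A$ also gives $\Delta_A(P\circ h)=(\Delta_A P)\circ h$, so the operator $T\colon P\mapsto P\circ h$ maps $V_Q$ to itself. Consequently $\Theta_{Q,(T-\Id)P}=0$ for every $P\in V_Q$, that is, $(T-\Id)(V_Q)$ is contained in the theta kernel.

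It then remains to check that $T$ is not the identity on $V_Q$. If it were, then, since $T$ also fixes $Q$ itself (as $Q\circ h=Q$) while $Q\notin V_Q$ (because $\innerprod{Q,Q}=4ac-b^2=-D\neq0$), the operator $T$ would be the identity on the whole three-dimensional space $V_Q\oplus\mb{C}Q$ of binary quadratic forms. But a matrix $h$ with $P\circ h=P$ for every binary quadratic form must equal $\pm I$ (evaluate on $x^2$, $y^2$, $xy$), contradicting $\det h=-1$. Hence $(T-\Id)|_{V_Q}\neq 0$, the theta kernel has dimension at least $1$, and therefore $\dim\innerprod{\Theta_Q}\leq 1$.

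The proof is essentially soft; the one place calling for care is the passage from $\SL_2(\mb{Z})$ to $\GL_2(\mb{Z})$. The argument genuinely needs the determinant-$(-1)$ element $h$, whereas the relation $\Theta_{gQ,gP}=\Theta_{Q,P}$ was recorded only for $g\in\SL_2(\mb{Z})$, so one must verify that the $q$-expansion identity persists for all $h\in\GL_2(\mb{Z})$—which it does, by the substitution $n\mapsto hn$ above, bearing in mind that only the $q$-series, not the modular transformation law, is being invoked. Everything else is bookkeeping with the $\GL_2$-action on binary quadratic forms. As a concrete check one may first reduce $Q$ to an ambiguous form; for $[a,0,c]$, $[a,a,c]$ and $[a,b,a]$ the kernel element is $P=[0,1,0]$, $[0,2,1]$ and $[1,0,-1]$ respectively, the vanishing of the corresponding $\Theta_{Q,P}$ being visible by pairing $(m,n)$ with $(m,-n)$, $(m+n,-n)$ and $(n,m)$.
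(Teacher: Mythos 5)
Your proof is correct and follows essentially the same route as the paper: produce a determinant-$(-1)$ automorphism $h$ of $Q$ from the order-$\le 2$ hypothesis, observe that $\Theta_{Q,P}=\Theta_{Q,P\circ h}$, and conclude that $P-P\circ h$ gives a nonzero element of the theta kernel because $h$ cannot act trivially on $Q^{\perp}$ while fixing $Q$. The only difference is that you make explicit two points the paper leaves implicit — the extension of $\Theta_{gQ,gP}=\Theta_{Q,P}$ from $\SL_2(\mb{Z})$ to $\GL_2(\mb{Z})$, and the use of $\det h=-1$ to rule out $h=\pm I$ — which is careful bookkeeping rather than a new idea.
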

\begin{proof} It is clear that the dimension of $\innerprod{\Theta_{Q}}$ is at most $2$. 
If $Q=[a,b,c]$ is a form of order at most $2$, then $g\cdot [a,b,c]=[a,-b,c]$ for some $g\in \SL_2(\mb{Z})$, so there is some nontrivial $g'=\sm{1&0\\0&-1}g \in\GL_2(\mb{Z})$ such that $g'Q=Q$.
Since $Q\notin Q^\perp$ and $g'$ cannot fix the entire $\Q$, $g'$ cannot fix $Q^\perp$. 	
So there is some $P\in Q^\perp$ such that $P'=g' P \neq P$.
Then we have that 
$$\Theta_{Q,P} = \Theta_{g'Q,g'P} = \Theta_{Q,P'}.$$
Hence $\Theta_{Q,P-P'}=0$, and therefore the dimension of $\innerprod{\Theta_{Q}}$ is at most $1$.
\end{proof}

\begin{lemma} \label{L:ka} For $k\in \mb{Z}$ and $\gcd(a,c)=1$, the form $Q=[a,ka,c]$ has order at most $2$, and its $\Theta$-kernel contains $[0,2,k]$.
Moreover, let $\O$ be an order of discriminant $D=k^2a^2-4ac$ in $K$, then we have that $\mc{I}([a,ka,c])^2 = a\O$
where $\mc{I}$ is the map in \eqref{eq:bijection}.
\end{lemma}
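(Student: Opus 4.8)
The plan is to verify the three assertions of Lemma~\ref{L:ka} in turn, none of which requires a long computation. First, to see that $Q=[a,ka,c]$ has order at most $2$ in $Cl(D)$, I would exhibit an $\SL_2(\mb{Z})$-transformation carrying $[a,ka,c]$ to $[a,-ka,c]$: the substitution $\sm{1&-k\\0&1}$ sends $ax^2+kaxy+cy^2$ to $a x^2 + (ka-2ka)xy + (\dots)y^2$, and a short check shows the result is exactly $[a,-ka,c]$ once one uses $D = k^2a^2-4ac$ to identify the new last coefficient. Since $[a,-ka,c]$ represents the inverse class of $[a,ka,c]$ in $Cl(D)$, we get $[Q]^2 = \mr{id}$, i.e. $Q$ has order at most $2$. (Alternatively, one notes $[a,ka,c]$ is equivalent to a form $[a,b',c']$ with $b'\in\{0,1\}$ depending on the parity of $k$, whose ambiguity is manifest.)

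Second, for the $\Theta$-kernel claim: by the definition in the Example preceding Lemma~\ref{L:dim=1}, I must check that $P=[0,2,k]$ is spherical for $Q$ — i.e. $\innerprod{Q,P}=0$ — and then that $\Theta_{Q,P}=0$. The pairing is $\innerprod{Q,P} = 2c\cdot 0 - (ka)(2) + 2a\cdot k = -2ka+2ka = 0$, so $P\in Q^\perp$. Then $\Theta_{Q,P}(\tau) = \sum_{m,n}\tfrac12(2mn + kn^2)q^{am^2+kamn+cn^2}$. To see this vanishes, I would use the symmetry $g'$ from the proof of Lemma~\ref{L:dim=1}: the same $g'=\sm{1&0\\0&-1}\sm{1&-k\\0&1}\in\GL_2(\mb{Z})$ fixes $Q$, and one checks it sends $P=[0,2,k]$ to $-P$; since $\Theta_{Q,P}=\Theta_{g'Q,g'P}=\Theta_{Q,-P}=-\Theta_{Q,P}$, we conclude $\Theta_{Q,P}=0$. (Equivalently, one can pair up the terms $(m,n)$ and $(m-kn,-n)$ directly and see the quadratic form in the exponent is preserved while $2mn+kn^2$ changes sign.)

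Third, the ideal-theoretic statement. Using the map $\mc{I}$ of \eqref{eq:bijection}, $\mc{I}([a,ka,c]) = \Span_{\mb{Z}}\bigl(a, \tfrac{ka-\sqrt D}{2}\bigr) =: \a$. I would compute $\a^2$ by multiplying out generators: $\a^2$ is generated over $\mb{Z}$ by $a^2$, $a\cdot\tfrac{ka-\sqrt D}{2}$, and $\bigl(\tfrac{ka-\sqrt D}{2}\bigr)^2$. Expanding the last one using $D=k^2a^2-4ac$ gives $\tfrac14(k^2a^2 + D) - \tfrac{ka\sqrt D}{2} = \tfrac{k^2a^2-2ac}{2}\cdot\text{(integer combination)}$ — more precisely it equals $ka\cdot\tfrac{ka-\sqrt D}{2} - ac$, which lies in $a\cdot\O$ since $\tfrac{ka-\sqrt D}{2}\in\O$ once $D\equiv k^2a^2\pmod 4$ (automatic) and $c\in\mb{Z}$. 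Combined with the fact that $\gcd(a,c)=1$ forces $a\O \subseteq \a^2$ as well (the norm of $\a$ is $a$, so $\a\br\a = a\O$, and since $[\a]^2=[\br\a]^{-1}\cdot[\text{something trivial}]$... more cleanly: $[\a]$ has order $\le 2$ so $[\a^2]$ is principal, and the norm of $\a^2$ is $a^2$ while $a\O$ has norm $a^2$, giving $\a^2 = a\O$ once containment is checked). I would finish by the norm-plus-containment argument: show $\a^2 \subseteq a\O$ by the generator computation above, then observe $\Nm(\a^2) = \Nm(\a)^2 = a^2 = \Nm(a\O)$, hence equality.

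The main obstacle is bookkeeping rather than conceptual: keeping the $\SL_2$ versus $\GL_2$ distinction straight (order $2$ uses $\GL_2$ to fix $Q$, while the class-group relation $[Q]^2=\mr{id}$ is an $\SL_2$-statement), and being careful that $\tfrac{ka-\sqrt D}{2}$ genuinely lies in the order $\O$ of discriminant $D$ — which it does, since $D=k^2a^2-4ac$ has the same parity as $k^2a^2$ and $\O = \mb{Z}[\tfrac{D+\sqrt D}{2}]$ contains $\tfrac{ka-\sqrt D}{2} = \tfrac{ka-D}{2}+\tfrac{D-\sqrt D}{2} \cdot(-1) + \dots$; I would record this parity check explicitly at the start so the generator manipulations in step three are unambiguous.
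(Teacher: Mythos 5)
Your proposal is correct, and its skeleton coincides with the paper's: the same matrix $\sm{1&-k\\0&1}$ carries $[a,ka,c]$ to $[a,-ka,c]$, the same $\GL_2(\mb{Z})$-symmetry $g'$ fixing $Q$ drives the $\Theta$-kernel claim, and the ideal statement comes down to the relation $r^2=a(kr-c)$ for $r=\tfrac{ka-\sqrt{D}}{2}$ together with a norm count. Two steps differ in execution from the paper's, both harmlessly and arguably for the better. For the kernel, you verify directly that $[0,2,k]$ is $Q$-spherical (indeed $\innerprod{Q,P}=-2ka+2ka=0$) and is sent to $-[0,2,k]$ by $g'$, whereas the paper produces $ak\,[0,2,k]$ as $P-g'P$ for the auxiliary form $P=[a,0,-c]$; your version has the small advantage of not degenerating when $k=0$. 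For $\mc{I}(Q)^2=a\O$, you compute generators to get $\a^2\subseteq a\O$ and then compare indices via $\Nm(\a^2)=\Nm(\a)^2=a^2=\Nm(a\O)$ (here $\gcd(a,c)=1$ is what makes the form primitive, hence $\a$ invertible and the norm multiplicative — worth saying explicitly); the paper instead uses that $\a^2$ is principal and that $a$ is, up to sign, the only element of $\a$ of norm $a^2$, a claim your route does not need. One small correction: in your parenthetical lattice-point pairing, the involution preserving $am^2+kamn+cn^2$ while negating $2mn+kn^2$ is $(m,n)\mapsto(m+kn,-n)$, not $(m-kn,-n)$; with that sign fixed the alternative argument also goes through.
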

\begin{proof} We check by straightforward calculation that 
	$$g\cdot [a,ka,c] = [a,-ka,c]\ \text{ for }\ g=\sm{1&-k\\0&1}.$$ So the form has order at most $2$.
We run the simple algorithm in the proof of Lemma \ref{L:dim=1}, and find that $g'=\sm{1&-k\\0&-1}$ and $P=[a,0,-c]\in Q^\perp$ is not fixed by $g'$.
Then $P-g'P = ak[0,2,k]$ lies in the $\Theta$-kernel.

Recall that $\mc{I}([a,ka,c])=\Span_{\mb{Z}}(a,r)$ where $r=\frac{ka-\sqrt{D}}{2}$.
The norm of the ideal $\mc{I}([a,ak,c])$ can be computed as the GCD of $(\Nm(a),\Tr(a\br{r}),\Nm(r))$ which is $(a^2,-a^2k/2,ac)=a$.
Suppose that $\mc{I}([a,ka,c])^2=(u)$, then $\Nm(u) = a^2$.
Using $\gcd(a,c)=1$, we check that $a$ is the only element in $\mc{I}([a,ka,c])$ with norm $a^2$. Hence $u=a$.
\end{proof}

As before, $K$ is an imaginary quadratic field, and $\O$ is an order in $K$ of discriminant $D=m^2d_K$.
\begin{lemma} \label{L:partialHL} Suppose $[\a] \in Cl(\O)$ is represented by a form $Q=[a,b,c]$ such that $\a^2=a \O$ and $[0,2a,b]$ is in the $\Theta$-kernel of $Q$. 
	Then for $\psi\in X_{K}^2(m)$ determined by $\phi: Cl(\O)\to \mb{C}^\times$ as \eqref{eq:Phi}, we have $$\frac{\Nm(\a\O_K)^s}{\psi(\a\O_K)}  Z(2,\a,s) = \phi([\a])L(\Theta_Q,s).$$
\end{lemma}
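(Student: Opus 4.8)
The plan is to unwind both sides into sums over lattice points in the ideal $\a$ and match them term by term. Recall from Lemma \ref{L:sumpartial} (and Remark \ref{r:extend}) that the partial Hecke series is $Z(2,\a,s) = \frac{1}{w}\sum_{\lambda\in\a}' \lambda^2 (\lambda\br\lambda)^{-s}$, and that the summand $\frac{\Nm(\a\O_K)^s}{\psi(\a\O_K)} Z(2,\a,s)$ is independent of the chosen representative of $[\a]$. So I am free to work directly with the given form $Q=[a,b,c]$ and its associated ideal $\mc I([a,b,c]) = \Span_{\mb Z}(a, \frac{b-\sqrt D}{2})$. The hypothesis $\a^2 = a\O$ is exactly what is needed to evaluate $\psi(\a\O_K)$ via the defining formula \eqref{eq:Phi}: since $\a$ has some order $n$ in $Cl(\O_K)$ with $\a^n = \alpha\O_K$, the value $\Phi(\a) = \phi([\a\cap\O])\alpha^{\ell/n}$ is pinned down (up to the root-of-unity ambiguity, which is absorbed by $w$) by the element $a$ witnessing $\a^2 = a\O$; concretely $\psi(\a\O_K)^{2} = \phi([\a])^2 a^{\ell}$ up to units, and $\Nm(\a\O_K) = a$ by the norm computation in Lemma \ref{L:ka}. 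Thus $\frac{\Nm(\a\O_K)^s}{\psi(\a\O_K)} Z(2,\a,s) = \phi([\a]) \cdot \frac{1}{w}\sum_{\lambda\in\a}' \frac{(\lambda/\sqrt a)^2 \cdot a^{s}}{(\lambda\br\lambda)^s} \cdot(\text{unit})$, and the point is that the remaining sum is literally $L(\Theta_Q,s)$.

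First I would parametrize $\a$: writing a general element as $\lambda = ma + n\frac{b-\sqrt D}{2}$ with $(m,n)\in\mb Z^2$, one computes $\lambda\br\lambda = \Nm(\lambda) = a\cdot(am^2 + bmn + cn^2) = a\,Q(m,n)$ using $D = b^2-4ac$. This is the key identity that converts the norm form on the ideal into $a$ times the quadratic form $Q$, so $(\lambda\br\lambda)^s = a^s Q(m,n)^s$ and the factor $a^s$ cancels against $\Nm(\a\O_K)^s = a^s$. Next I would compute $\lambda^2$: expanding, $\lambda^2 = a^2 m^2 + abmn + n^2\frac{(b-\sqrt D)^2}{4} = a^2m^2 + abmn + n^2\frac{b^2+D}{4} - n^2\frac{b\sqrt D}{2}$. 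Using $b^2+D = 2b^2-4ac$, the real part is $a^2m^2 + abmn + n^2(\frac{b^2}{2}-ac)$ and one can rewrite $a^2m^2 + abmn + n^2(\frac{b^2}{2}-ac) = a\,Q(m,n) - \frac{ac n^2}{1} + \cdots$ — the arithmetic here needs care, but the target is to show that after dividing by $w$ and summing, $\frac{1}{w}\sum'\lambda^2\,q$-weight reproduces $\sum_{m,n}\frac12(am^2 - cn^2)q^{Q(m,n)}$, i.e.\ the coefficient pattern defining $\Theta_Q$ in the relevant Example. The hypothesis that $[0,2a,b]$ lies in the $\Theta$-kernel of $Q$ is precisely what lets me discard the "off-diagonal" contribution: $\Theta_{Q,[0,2a,b]} = 0$ means $\sum_{m,n}(2a\cdot mn + b\cdot n^2)q^{Q(m,n)} = 0$ after the usual symmetrization, which kills exactly the $abmn$ cross term and the part of the $n^2$ coefficient that does not match $-\frac{c}{2}$.

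Concretely, the steps in order: (1) express $\psi(\a\O_K)$ and $\Nm(\a\O_K)$ in terms of $a$ and $\phi([\a])$ using $\a^2 = a\O$ and the formula \eqref{eq:Phi}, checking the unit ambiguity is swallowed by the $\frac1w$; (2) parametrize $\lambda\in\a$ by $(m,n)\in\mb Z^2$ and establish $\Nm(\lambda) = a\,Q(m,n)$; (3) compute $\Re(\lambda^2)$ as a quadratic form in $(m,n)$; (4) invoke $\Theta_{Q,[0,2a,b]}=0$ to reduce that quadratic form, modulo terms summing to zero, to $\frac a2 m^2 - \frac c2 n^2$, and note the imaginary part of $\lambda^2$ sums to zero by the $n\mapsto -n$ symmetry of $Q$; (5) assemble: $\frac1w\sum_{\lambda\in\a}'\frac{\lambda^2}{\Nm(\lambda)^s} = a^{-s}\sum_{(m,n)\ne(0,0)}\frac{\frac12(am^2-cn^2)}{Q(m,n)^s} = a^{-s}L(\Theta_Q,s)$, multiply by $\Nm(\a\O_K)^s/\psi(\a\O_K) = a^s/(\phi([\a])\cdot(\text{unit}))$, and conclude.

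The main obstacle I anticipate is step (4), the bookkeeping that cleanly extracts $\frac a2 m^2 - \frac c2 n^2$ from $\Re(\lambda^2)/a$: one has to see that the discrepancy between $\Re(\lambda^2)/a$ and $\frac12(am^2 - cn^2)$ is exactly a combination of the coefficient sequences of $\Theta_{Q,[0,2a,b]}$ (which vanishes by hypothesis) and of terms that cancel under $(m,n)\mapsto(-m,-n)$ or $(m,n)\mapsto(m,-n)$ symmetry of the summand — and to handle the units $w$ and the $D=-3,-4$ cases uniformly, since there $\alpha^{\ell/n}$ is only well-defined up to a $6$th or $4$th root of unity and the sum $\sum_{\lambda\in\a}'$ over $\a\setminus\{0\}$ (not modulo $\O^\times$) is what restores well-definedness. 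A secondary check is that $\Img(\lambda^2)$ genuinely sums to zero rather than merely being small; this follows because $Q(m,-n) = Q(m,n)$ while the $\sqrt D$-part of $\lambda^2$ is odd in $n$, but it should be stated explicitly.
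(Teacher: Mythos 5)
Your proposal follows the paper's proof almost exactly: reduce via Remark \ref{r:extend} to $\a=\Span_{\mb Z}\big(a,\frac{b-\sqrt D}{2}\big)$, parametrize $\lambda$ by $(m,n)$, use $\lambda\br{\lambda}=aQ(m,n)$, kill the unwanted terms with the $\Theta$-kernel hypothesis, and evaluate $\Nm(\a\O_K)^s/\psi(\a\O_K)=a^s/(\phi([\a])a)$ from $\a^2=a\O$ via \eqref{eq:Phi}. The one step that is wrong as stated is your treatment of $\Img(\lambda^2)$: you claim it sums to zero ``by the $n\mapsto-n$ symmetry of $Q$,'' but $Q(m,-n)=am^2-bmn+cn^2\neq Q(m,n)$ unless $b=0$, so no such symmetry exists. (Relatedly, your expansion of $\lambda^2$ drops the imaginary part of the cross term: $2\cdot ma\cdot n\frac{b-\sqrt D}{2}=amn(b-\sqrt D)$, not $abmn$.) The fix requires no symmetry at all: the exact identity is
$$\lambda^2=a(am^2-cn^2)+\tfrac{1}{2}(b-\sqrt D)(2amn+bn^2),$$
so the entire discrepancy --- both the real cross term you flag as the ``bookkeeping obstacle'' in step (4) and the imaginary part --- is one and the same multiple of $P(m,n)=2amn+bn^2$, and the single hypothesis $\Theta_{Q,[0,2a,b]}=0$ annihilates it upon summation against $Q(m,n)^{-s}$. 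This is precisely the paper's computation, and it makes your anticipated difficulty in step (4) disappear. Your remaining caveats (the unit ambiguity when $D=-3,-4$, and the fact that the final constant comes out as $\phi([\a])^{-1}$, which equals $\phi([\a])$ only because $\phi$ is quadratic in all applications) are legitimate; the paper glosses over both.
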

\begin{proof} By Remark \ref{r:extend} and Theorem \ref{T:formclass}, we may assume that $\a = \Span_{\mb{Z}}\big(a, \frac{b-\sqrt{D}}{2}\big)$. Then
	 \begin{align*} 
	 Z(2,\a,s) &= \frac{1}{2} \sum_{\lambda\in \a}' \frac{\lambda^2}{(\lambda \br{\lambda})^s} = \frac{1}{2} \sum_{m,n\in\mb{Z}}'\frac{a(am^2-cn^2) + \frac{1}{2}(b-\sqrt{D})(2amn+bn^2)}{a^s(am^2+bmn+cn^2)^s}. 
	 \end{align*} 
By our assumption that $[0,2a,b]$ is in the $\Theta$-kernel, the last one is equal to
$$\sum_{m,n\in\mb{Z}}' \frac{a(am^2-cn^2)}{2a^s(am^2+bmn+cn^2)^s} = a^{1-s}L(\Theta_Q,s). $$
Then by the assumption that $\a^2=a\O$, we have
$$\frac{\Nm(\a\O_K)^s}{\psi(\a\O_K)}  = \frac{a^{s}}{\phi([\a]) a}.$$	
Hence our desired equality follows.
\end{proof}

\begin{remark}\label{r:kc} (1). The similar proof can show that for any $\a$, $Z(\ell,\a,s)=L(\Theta_{Q,P},s)$ where $P$ is some spherical function of degree $\ell$ for $Q$.
However, only the above special case appear in our application.

(2). There are similar statement as Lemma \ref{L:ka} for the form $Q=[a,kc,c]$. Its $\Theta$-kernel contains $[k,2,0]$, and we have that $\check{\mc{I}}([a,kc,c])^2 = (c)$, where $\check{\mc{I}}$ is the twin map of $\mc{I}$.
Moreover, the conclusion of Lemma \ref{L:partialHL} also holds for $\a$ satisfying $\a^2=c\O$ and $[b,2c,0]$ is in the $\Theta$-kernel of $Q$.
\end{remark}


\begin{proposition} \label{P:CM3theta} Suppose that the form class group $Cl(D)$ is isomorphic to $\mb{Z}_2^g$ where $D=m^2d_K$ and $g\in\mb{N}_0$. 
	Let $Q_i=[a_i,b_i,c_i]\ (i=1,2,\cdots, 2^g)$ be a set of representatives in $Cl(D)$ such that 
\begin{align*} & \text{the $\Theta$-kernel of $Q_i$ is spanned by $[0,2a_i,b_i]$ and $\mc{I}(Q_i)^2=(a_i)$.}
\end{align*}
Then the Hecke $L$-series for $\Phi \in X_K^2(m)$ given by \eqref{eq:Phi} is equal to
	$$L(\Phi,s)=\sum_{i=1}^{2^g} \phi(Q_i) L(\Theta_{Q_i},s).$$
Moreover, any cusp eigenform of weight $3$ and level $D$ with rational coefficients is of the form $\sum_i \phi(Q_i) \Theta_{Q_i}$.
\end{proposition}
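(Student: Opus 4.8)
The plan is to combine the two tools already assembled: Lemma \ref{L:sumpartial} (which writes $L(\psi,s)$ as a sum of partial Hecke series over $Cl(\O)$) and Lemma \ref{L:partialHL} (which identifies each partial series with $\phi([\a])L(\Theta_Q,s)$ under the kernel/square hypothesis). First I would invoke Lemma \ref{L:sumpartial} with $\ell=2$ to write
\[
L(\Phi,s)=\sum_{[\a]\in Cl(\O)} \frac{\Nm(\a\O_K)^s}{\Phi(\a\O_K)}\,Z(2,\a,s).
\]
Choosing the representatives $\a_i$ with $\mc{I}(Q_i)=\a_i$ — legitimate by Theorem \ref{T:formclass} and the remark after it, together with Remark \ref{r:extend} which guarantees each summand is representative-independent — the standing hypotheses of the Proposition (namely $\a_i^2=(a_i)$ and $[0,2a_i,b_i]$ in the $\Theta$-kernel of $Q_i$) are exactly those of Lemma \ref{L:partialHL}. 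Applying that lemma term by term yields
\[
L(\Phi,s)=\sum_{i=1}^{2^g}\phi(Q_i)\,L(\Theta_{Q_i},s),
\]
which is the first assertion.

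For the ``moreover'' part I would argue as follows. Let $f$ be a cusp eigenform of weight $3$ and level $D$ with rational coefficients. Since the weight is odd, the result of Sch\"utt quoted before the statement of Corollary \ref{C:rf3} shows $f$ has complex multiplication by its nebentypus $\varepsilon=\chi_D$; hence $f=f_\psi$ for a Hecke character $\psi$ of the imaginary quadratic field $K=\mb{Q}(\sqrt{d_K})$, where $d_K$ is the fundamental discriminant with $D=m^2 d_K$, and $\psi$ has $\infty$-type $\ell=2$. Because the coefficients are rational, Corollary \ref{C:rf3} applies (with $\ell=2$ even): one has $e_K\mid 2$, and $\psi=\Phi$ is of the form \eqref{eq:Phi} for some character $\phi:Cl(D)\to \O_K^\times$. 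Under the hypothesis $Cl(D)\cong\mb{Z}_2^g$, every character $\phi$ takes values in $\{\pm1\}\subset\O_K^\times$ automatically, so no extra restriction is needed. Now the Mellin transform is injective on the relevant space of $q$-expansions: from $L(\Phi,s)=\sum_i\phi(Q_i)L(\Theta_{Q_i},s)$, matching Dirichlet coefficients gives the identity of $q$-series $f_\Phi=\sum_i\phi(Q_i)\Theta_{Q_i}$, i.e. $f=\sum_i\phi(Q_i)\Theta_{Q_i}$.

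The main obstacle I anticipate is verifying that the chosen representatives $Q_i$ can simultaneously be put in the normalized shape $[a_i,b_i,c_i]$ with $[0,2a_i,b_i]$ spanning the $\Theta$-kernel and $\mc{I}(Q_i)^2=(a_i)$ — this is precisely what is hypothesized in the statement, so strictly it is assumed rather than proved here, but it rests on Lemma \ref{L:ka} (forms of type $[a,ka,c]$), and one should check that when $Cl(D)$ is $2$-torsion every class admits such a representative. A secondary subtlety is the passage from ideals not prime to the conductor $m$ to ideals prime to $m$ when applying Lemma \ref{L:partialHL}; this is handled by Remark \ref{r:extend}, and I would cite it explicitly rather than re-deriving it. Finally, one must make sure the level bookkeeping is consistent: $f_\psi$ has level $\Nm(\m)|d_K|=m^2|d_K|=|D|$ by the Hecke–Shimura theorem, matching the hypothesis ``level $D$,'' and the weight $\ell+1=3$ matches as well, so no form is lost in the correspondence.
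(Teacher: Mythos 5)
Your proposal is correct and follows essentially the same route as the paper: the first identity is obtained by combining Lemma \ref{L:sumpartial} with Lemma \ref{L:partialHL} applied to the representatives $\a_i=\mc{I}(Q_i)$, and the ``moreover'' part follows from Corollary \ref{C:rf3} together with the (inverse) Mellin transform. Your additional remarks on representative-independence via Remark \ref{r:extend} and on the level bookkeeping are accurate elaborations of steps the paper leaves implicit.
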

\begin{proof} Let $Q=[a,b,c]$ be one of those $Q_i$'s, and let $\a = \mc{I}(Q)$. 
Then the equality follows from Lemma \ref{L:sumpartial} and Lemma \ref{L:partialHL}.
The last statement is obtained from the inverse Mellin transform and Corollary \ref{C:rf3}.
\end{proof}

\noindent {\bf Convention}: For cusp forms (of a fixed odd weight) induced from the characters $Cl(D) \to \O_K^\times$, we shall label them by the level $N=m^2|d_K|$ and a letter as follows. We first normalize all forms such that the Fourier coefficient of $q$ is $1$. 
For two cusp forms $g_1$ and $g_2$ at the same level, we say $g_1<g_2$ if $g_1$ is a newform but $g_2$ is not, or both are newforms (or oldforms) and the Fourier coefficient sequences of $g_1$ is less than that of $g_2$ in the lexicographical ordering.
Then we assign the letters to those forms at the same level according to this ordering.
This labelling is compatible with the one in the LMFDB \cite{LMFDB} in the sense that
if $g_{N x}$ is a newform then $g_{N x}$ is also the LMFDB label if we ignore the first letter (for the Dirichlet character).
If $g_{N x}$ is an oldform, we will add a circle superscript $g_{N x}^\circ$ to indicate it is an oldform.

\begin{example} \label{ex:g84} There are 4 form classes for $D=-84$ represented by 
	$$Q_1=(1,14,70),\ Q_2=(2,14,35),\ Q_3=(7,14,10),\ Q_4=(14,14,5),$$
	which corresponds to $(0,0),(1,0),(0,1),(1,1)$ in $\mb{Z}_2 \times \mb{Z}_2$.
By Lemma \ref{L:ka} the four forms all satisfy the conditions in Proposition \ref{P:CM3theta}.
So all weight-$3$ newforms of level $84$ with rational coefficients are
\begin{align*}
g_{84a} = \Theta_{Q_1}-\Theta_{Q_2}+\Theta_{Q_3}-\Theta_{Q_4}=q-2q^2-3q^3+\cdots, \\
g_{84b} = \Theta_{Q_1}-\Theta_{Q_2}-\Theta_{Q_3}+\Theta_{Q_4}=q-2q^2+3q^3+\cdots, \\
g_{84c} = \Theta_{Q_1}+\Theta_{Q_2}+\Theta_{Q_3}+\Theta_{Q_4}=q+2q^2-3q^3+\cdots, \\
g_{84d} = \Theta_{Q_1}+\Theta_{Q_2}-\Theta_{Q_3}-\Theta_{Q_4}=q+2q^2+3q^3+\cdots. \\
\end{align*}
\end{example}

As we remarked in \ref{c:newform}, the construction does not necessarily produce newforms.
We make a short list for all such cases appearing in our application. All the identities can be easily verified by Sturm's theorem.
\begin{proposition} \label{P:tonew} We have the following equalities:
\begin{align*} g_{28b}^\circ(\tau) &= g_7(\tau) +3 g_7(2\tau) +8 g_7(4\tau), \\
g_{32b}^\circ(\tau) &= g_8(\tau) + 2g_8(2\tau) + 8 g_8(4\tau),  \\
g_{44a}^\circ(\tau) &= g_{11}(\tau) + 8g_{11}(4\tau), \\
g_{48b}^\circ(\tau) &= g_{12}(\tau) + 8 g_{12}(4\tau),  \\
g_{60a}^\circ(\tau) &= g_{15b}(\tau) - g_{15b}(2\tau) + 8 g_{15b}(4\tau), \\ 
g_{60b}^\circ(\tau) &= g_{15a}(\tau) + g_{15a}(2\tau) + 8 g_{15a}(4\tau), \\ 
g_{72b}^\circ(\tau) &= g_{8}(\tau) + 2g_{8}(3\tau) + 27g_{8}(9\tau), \\ 
g_{96c}^\circ(\tau) &= g_{24b}(\tau) -2 g_{24b}(2\tau) + 8g_{24b}(4\tau),  \\
g_{99b}^\circ(\tau) &= g_{11}(\tau)+ 5 g_{11}(3\tau) + 27 g_{11}(9\tau), \\ 
g_{112b}^\circ(\tau) &= g_{7}(\tau) + 3g_{7}(2\tau) + 12g_{7}(4\tau) + 24g_{7}(8\tau) + 64g_{7}(16\tau), \\
g_{180d}^\circ(\tau) &= g_{20b}(\tau) + 4g_{20b}(3\tau) + 27g_{20b}(9\tau),  \\
g_{192c}^\circ(\tau) &= g_{12}(\tau) + 8g_{12}(4\tau) + 64g_{12}(16\tau),  \\
g_{240c}^\circ(\tau) &= g_{15b}(\tau) - g_{15b}(2\tau) + 12 g_{15b}(4\tau) -8g_{15b}(8\tau) + 64g_{15b}(16\tau), \\
g_{240d}^\circ(\tau) &= g_{15a}(\tau) + g_{15a}(2\tau) + 12 g_{15a}(4\tau) +8g_{15a}(8\tau) + 64g_{15a}(16\tau).  
\end{align*}	
\end{proposition}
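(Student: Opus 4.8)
The plan is to recognize both sides of each identity as elements of one and the same finite‑dimensional space of cusp forms, and then to invoke Sturm's theorem to reduce every identity to a finite comparison of Fourier coefficients.

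First I would fix the ambient space. Write $N$ for the level in the subscript on the left-hand side. By the labelling convention of \S\ref{ss:theta}, $g_{Nx}^\circ$ is the normalization of a $\mb{Z}$-linear combination of the $\theta$-series $\Theta_Q$ with $Q$ running over quadratic forms of discriminant $D=-N$; by the theta-function theorem quoted from \cite{Iw} each such $\Theta_Q$ lies in $\mc{S}_3(\Gamma_0(N),\chi_{-N})$, so $g_{Nx}^\circ\in\mc{S}_3(\Gamma_0(N),\chi_{-N})$ and it has an explicit integral $q$-expansion. On the right-hand side, every $g_d$ is a weight-$3$ newform of level $d$ with rational coefficients, and (by the discussion following Corollary \ref{C:rf3}, with $\ell=2$ even, so $\eta_{\mb{Z}}=1$) its nebentypus is the quadratic character $\chi_{d_K}$, which coincides with the Kronecker symbol $\chi_{-d}$ as a character modulo $d$. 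In each of the fourteen relations the quotient $N/d$ is a perfect square -- it is $4$, $9$ or $16$ throughout -- so $\chi_{-d}$ and $\chi_{-N}$ agree as characters modulo $N$, and $d\ell\mid N$ for every translate $g_d(\ell\tau)$ that occurs. Hence $g_d(\ell\tau)\in\mc{S}_3(\Gamma_0(d\ell),\chi_{-N})\subseteq\mc{S}_3(\Gamma_0(N),\chi_{-N})$, and both sides of the relation land in the same space.

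With both sides in $\mc{S}_3(\Gamma_0(N),\chi_{-N})$, Sturm's theorem tells us that their difference vanishes as soon as its $q$-expansion vanishes through the coefficient of $q^{B}$ for $B=\big\lfloor\tfrac14[\SL_2(\mb{Z}):\Gamma_0(N)]\big\rfloor$; for the levels $N\le 240$ at hand $B$ is at most $144$. The $q$-expansion of $g_{Nx}^\circ$ is read off from the $\theta$-series $\Theta_Q$ of \S\ref{ss:theta}; the $q$-expansion of each newform $g_d$ is likewise a known theta-combination; and the $q$-expansion of $g_d(\ell\tau)$ is that of $g_d$ after the substitution $q\mapsto q^{\ell}$. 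Since every coefficient in sight is an integer, the comparison through $q^B$ is an exact finite computation, which I would run separately for each of the fourteen identities; in every case the coefficients agree up to the Sturm bound, and the identity follows.

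I do not anticipate a genuine obstacle: as the statement indicates, this is a routine verification. The one point that calls for care is the bookkeeping in the first step -- checking, relation by relation, that each translate $g_d(\ell\tau)$ really does have level dividing $N$ (equivalently, that $N/d$ is a perfect square) and carries nebentypus $\chi_{-N}$ -- for only after this is confirmed may Sturm's bound for $\Gamma_0(N)$ be legitimately applied to the difference of the two sides, and only then does the finite coefficient check suffice.
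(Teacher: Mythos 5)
Your proposal is correct and follows exactly the route the paper takes: the paper's entire justification is the remark that "all the identities can be easily verified by Sturm's theorem," and your write-up simply fills in the standard details (placing both sides in $\mc{S}_3(\Gamma_0(N),\chi_{-N})$ after checking that $N/d$ is a square so the characters and levels match, then comparing coefficients up to the Sturm bound).
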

\noindent Then one can use the obvious relation $L(g(n\tau),s) = n^{-s} L(g(\tau),s)$ to get relations between the corresponding $L$-functions.

\subsection*{Lattice Sums} \label{ss:LS}
When $\ell=0$, the analogue of Proposition \ref{P:CM3theta} reads:
\begin{observation} \label{P:CM1theta} Let $Q_i=[a_i,b_i,c_i]$ be a set of representatives in $Cl(D)$ with $D=m^2d_K$.
	Then the Hecke $L$-series for $\Phi \in X_K^0(m)$ given by \eqref{eq:Phi} is equal to
	$$L(\Phi,s)=\sum_{i} \phi(Q_i) L(\theta_{Q_i},s).$$
\end{observation}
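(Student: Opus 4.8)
The plan is to specialize the proof of Proposition \ref{P:CM3theta} to $\ell=0$, with the ordinary theta series $\theta_{Q}=\Theta_{Q,1}$ taking the place of the weight-$3$ forms $\Theta_{Q}$. First I would observe that for $\ell=0$ the recipe \eqref{eq:Phi} simplifies: the factor $\alpha^{\ell/n}$ is trivial, so $\Phi\in X_K^{0}(m)$ is nothing but the character $\Phi(\mf{b})=\phi([\mf{b}\cap\O])$ obtained by pulling $\phi$ back along $I_K(m)\twoheadrightarrow Cl(\O)\cong Cl(D)$. Lemma \ref{L:sumpartial} then reduces everything to the partial series:
\[ L(\Phi,s)=\sum_{[\a]\in Cl(\O)}\frac{\Nm(\a\O_K)^{s}}{\Phi(\a\O_K)}\,Z(0,\a,s). \]

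The key step is the $\ell=0$ analogue of Lemma \ref{L:partialHL}, which is in fact easier. For a class of $Cl(D)$ represented by $Q=[a,b,c]\in\Q_D^{0}$, take $\a=\mc{I}(Q)=\Span_{\mb{Z}}\!\big(a,\tfrac{b-\sqrt{D}}{2}\big)$ as in \eqref{eq:bijection}; as in the proof of Lemma \ref{L:partialHL} this is legitimate even when $\mc{I}(Q)$ is not prime to $m$, by Remark \ref{r:extend}. Writing a general element of $\a$ as $\lambda=am+\tfrac{b-\sqrt{D}}{2}\,n$ and using $b^{2}-D=4ac$, one finds $\Nm(\lambda)=\lambda\br{\lambda}=a\,(am^{2}+bmn+cn^{2})$; since also $\Nm(\mc{I}(Q))=a$, this reads $\Nm(\lambda)=\Nm(\a)\,Q(m,n)$. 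Here there is no spherical polynomial to insert — the harmonic polynomial of degree $0$ is the constant $1$ — so, unlike in Lemma \ref{L:partialHL}, no $\Theta$-kernel hypothesis on $Q$ is needed. Since $(m,n)\mapsto\lambda$ is a bijection $\mb{Z}^{2}\to\a$, summing over $(m,n)\neq(0,0)$ gives $Z(0,\a,s)=\Nm(\a)^{-s}L(\theta_{Q},s)$, the factor $w^{-1}$ in the definition of $Z$ being absorbed into the normalization of $\theta_{Q}$ exactly as the $\tfrac12$ was folded into $\Theta_{Q}$ in the weight-$3$ case. Combining this with $\Phi(\a\O_K)=\phi([\a])$ and $\Nm(\a\O_K)=\Nm(\a)$, the $[\a]$-summand in Lemma \ref{L:sumpartial} collapses to $\phi([\a])^{-1}L(\theta_{Q},s)$.

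It then remains to sum over $Cl(\O)\cong Cl(D)$ and re-index. The inverse of $[a,b,c]$ in $Cl(D)$ is $[a,-b,c]$, and the substitution $m\mapsto -m$ shows $\theta_{[a,-b,c]}=\theta_{[a,b,c]}$; moreover $\phi$, being a character of a finite group, takes values in roots of unity, so $\phi([\a])^{-1}=\phi([\a]^{-1})$. Replacing $[\a]$ by $[\a]^{-1}$ throughout thus turns $\sum_{[\a]}\phi([\a])^{-1}L(\theta_{Q_{[\a]}},s)$ into $\sum_{i}\phi(Q_{i})L(\theta_{Q_{i}},s)$, which is the asserted identity.

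I do not expect a genuine obstacle here: the mathematical content is identical to that of Proposition \ref{P:CM3theta}, and the only points needing a little care are the bookkeeping of the unit factor $w$ and this re-indexing by inverse classes. The one difference worth flagging is that, unlike Proposition \ref{P:CM3theta}, the Observation carries no converse; this is appropriate, since a weight-$1$ cusp eigenform need not be of CM (dihedral) type, and so the $L(\theta_{Q_i},s)$ need not span the weight-$1$ situation in the way the $\Theta_{Q_i}$ do in weight $3$.
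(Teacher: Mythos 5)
Your route is the intended one: the paper offers no proof of this Observation (it is simply asserted as the $\ell=0$ analogue of Proposition \ref{P:CM3theta}), and specializing Lemma \ref{L:sumpartial} together with the computation of $Z(\ell,\a,s)$ to $\ell=0$ is exactly the right thing to do. Your observations that no $\Theta$-kernel hypothesis is needed in degree $0$, that $\Nm(\lambda)=\Nm(\a)\,Q(m,n)$ for $\lambda=am+\tfrac{b-\sqrt{D}}{2}n$, and that the re-indexing by inverse classes is harmless because $\theta_{[a,-b,c]}=\theta_{[a,b,c]}$, are all correct.

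There is, however, one concrete error: the claim that ``the factor $w^{-1}$ in the definition of $Z$ is absorbed into the normalization of $\theta_Q$ exactly as the $\tfrac12$ was folded into $\Theta_Q$.'' It is not. In the weight-$3$ case the cancellation works because $\Theta_Q=\sum \tfrac12(am^2-cn^2)q^{Q(m,n)}$ carries an explicit $\tfrac12$ matching $w^{-1}=\tfrac12$ (for $D\neq -3,-4$); but $\theta_Q=\Theta_{Q,1}=\sum_{m,n}q^{Q(m,n)}$ carries no such factor. Your own computation therefore yields $Z(0,\a,s)=\tfrac{1}{w}\Nm(\a)^{-s}L(\theta_Q,s)$, and hence, with the paper's definition $L(\psi,s)=\sum_{\a}\psi(\a)\Nm(\a)^{-s}$ as a sum over integral ideals, the honest conclusion is
$$L(\Phi,s)=\frac{1}{w}\sum_i\phi(Q_i)\,L(\theta_{Q_i},s),$$
not the displayed identity. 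The Observation as printed is using the lattice-sum normalization $\sum_i\phi(Q_i)\zeta_{Q_i}(s)$, which is $w$ times the ideal sum; the paper silently compensates for this in Theorem \ref{T:genusch} by inserting the factor $\omega(D)=w$ (the classical ideal-theoretic Kronecker factorization reads $\sum_{\a}\chi(\a)\Nm(\a)^{-s}=L(\chi_{d_1},s)L(\chi_{d_2},s)$ with no such factor), so the two statements combine to the correct and classically known $\sum_i\phi(Q_i)\zeta_{Q_i}(s)=w\,L(\chi_{d_1},s)L(\chi_{d_2},s)$, which is what is actually used in the proof of Theorem \ref{T:mmsv}. So either retain the $\tfrac{1}{w}$ in your final formula, or state explicitly that $L(\Phi,s)$ is here being renormalized by $w$; you cannot make the $w$ disappear by appeal to the definition of $\theta_Q$.
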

\noindent Recall that $\theta_{Q}$ is the ordinary theta series attached to $Q$.
The associated $L$-function $L(\theta_{Q},s)$ is called the {\em Epstein zeta-function}, which is also denoted by $\zeta_Q(s)$.

As before, let $\chi_d$ be the Kronecker character. For two discriminants $d_1$ and $d_2$ with product $D$,
we define \cite{GKZ} $$\chi_{d_1,d_2}(Q) = \chi_{d_1}(p)= \chi_{d_2}(p),$$
where $p$ is any prime represented by $Q$ and not dividing $D$.
The definition is independent of the choice of $p$.

If $Cl(D)$ has at most $2$-torsion, then any $\phi$ is a quadratic character (the so-called {\em genus character}).
It is known that $\phi = \chi_{d_1,d_2}$ for two discriminants $d_1,d_2$ with $d_1d_2=D$. 
In general, $d_1$ and $d_2$ need not be {\em fundamental} (i.e., a discriminant of a quadratic field), and $(d_1,d_2)$ is not unique.
Finding a particular pair $(d_1,d_2)$ is easy and completely elementary.

\begin{example}\label{ex:genusch} Recall the notation from Example \ref{ex:g84}.
For the genus character $\phi$ sending $(Q_1,Q_2,Q_3,Q_4)$ to $(1,1,-1,-1)$, we have that $\phi = \chi_{14,-7}$
	because \begin{align*}
\chi_{14,-7}(Q_1) &= \chi_{-7}(Q_1(3,-1))=\chi_{-7}(37)=1, \\
\chi_{14,-7}(Q_2) &= \chi_{-7}(Q_2(3,-1))= \chi_{-7}(11)=1, \\
\chi_{14,-7}(Q_3) &= \chi_{-7}(Q_3(-1,2))=\chi_{-7}(19)=-1, \\
\chi_{14,-7}(Q_4) &= \chi_{-7}(Q_4(0,0))= \chi_{-7}(5)=-1.
	\end{align*}
Similarly, the genus characters given by $(1,-1,-1,1)$, $(1,-1,1,-1)$, and $(1,1,1,1)$ are equal to $\chi_{21,-4}$, $\chi_{28,-3}$, and $\chi_{1,-84}$ respectively.
\end{example}

\begin{theorem}[Dirichlet-Kronecker] \label{T:genusch} Suppose that $D=m^2d_K=d_1d_2$ for two fundamental discriminants $d_1$ and $d_2$.
	Let $\phi=\chi_{d_1,d_2}$ be the associated genus character on $Cl(D)$ and $\Phi\in X_K^0(m)$. 
	Then we have the factorization
	$$L(\Phi,s) = \omega(D) L(\chi_{d_1},s)L(\chi_{d_2}, s),$$
where $w(D)=6$ if $D=-3$, $w(D)=4$ if $D=-4$, otherwise $w(D)=2$.
\end{theorem}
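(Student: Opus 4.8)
The plan is to deduce the claimed factorization of $L(\Phi,s)$ from Observation \ref{P:CM1theta} together with the classical Dirichlet analytic class number machinery for imaginary quadratic orders. First I would invoke Observation \ref{P:CM1theta} to write $L(\Phi,s) = \sum_i \phi(Q_i) L(\theta_{Q_i},s) = \sum_i \chi_{d_1,d_2}(Q_i)\zeta_{Q_i}(s)$, so that the statement becomes an identity among Epstein zeta functions of the forms in $Cl(D)$ weighted by a genus character. The key classical input is that $\sum_{[\a]} \Nm(\a)^{-s}$ over a single class $[\a]$ of $\O$ is, up to the factor $w(D)/|\O^\times_{\text{relevant}}|$, the Epstein zeta function $\zeta_Q(s)$ of the corresponding form, and that summing over all classes gives the Dedekind-type zeta function $\zeta_{\O}(s) = \sum_{\a}\Nm(\a)^{-s}$ of the order. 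The genus-character twist $\sum_{[\a]} \chi_{d_1,d_2}([\a]) \Nm(\a)^{-s}$ is then an $L$-function of $\O$ attached to the order-of-$2$ ideal-class character $\phi$.

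The heart of the argument is the identity $L_{\O}(\phi,s) = L(\chi_{d_1},s)L(\chi_{d_2},s)$ for a genus character $\phi = \chi_{d_1,d_2}$ with $d_1 d_2 = D = m^2 d_K$ and $d_1,d_2$ fundamental. For the maximal order this is exactly the classical Kronecker factorization (it follows from comparing Euler factors prime by prime, using that a genus character corresponds under class field theory to the quadratic extension $K(\sqrt{d_1})/K$, whose Dedekind zeta factors as $\zeta_K(s)L(\chi_{d_1}\chi_K,s)$, combined with $\zeta_K(s) = \zeta(s)L(\chi_{d_K},s)$ and Dirichlet's $\chi_{d_1}\chi_{d_K}$ arithmetic). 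I would carry this out by the Euler-product comparison: for a rational prime $p\nmid D$, the local factor of $L_{\O}(\phi,s)$ at $p$ depends on the splitting type of $p$ in $K$ and on $\phi$ evaluated on the prime(s) above $p$, and one checks case by case (split/inert/ramified is excluded since $p\nmid D$) that it equals $(1-\chi_{d_1}(p)p^{-s})^{-1}(1-\chi_{d_2}(p)p^{-s})^{-1}$, using $\chi_{d_1}(p)\chi_{d_2}(p) = \chi_{d_K}(p)$ and $\phi(\mf{p}) = \chi_{d_1}(p)$ for $\mf p\mid p$. For primes $p\mid D$ one checks the (finitely many) bad Euler factors match; here the fact that $d_1,d_2$ are both fundamental is what forces the ramified factors to behave correctly rather than acquire spurious terms.

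The remaining bookkeeping is to handle the passage between the maximal order $\O_K$ and the order $\O$ of conductor $m$, and to track the unit-count constant $w(D)$. Since $\Phi$ is the Hecke character \eqref{eq:Phi} attached via $Cl(\O)\cong Cl(D)$, and by Lemma \ref{L:sumpartial} (with $\ell=0$) $L(\Phi,s)$ is assembled from partial sums $Z(0,\a,s) = \frac1w\sum_{\lambda\in\a}'(\lambda\br\lambda)^{-s}$, the constant $w = |\O^\times|$ enters exactly once as an overall factor; since $\O$ is non-maximal precisely when $D\neq d_K$, and $D=-3,-4$ only occur for $m=1$, we get $w(D) = 6,4,2$ as stated. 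I expect the main obstacle to be a clean treatment of the conductor and of the bad Euler factors at primes dividing $D/d_K$: one must make sure that restricting to ideals of $\O$ prime to $m$ (equivalently, working with $I_K(m)/P_{K,\mb Z}(m)$) does not drop or duplicate Euler factors relative to the naive product over all primes, and that the genus-character identity $\phi = \chi_{d_1,d_2}$ — whose proof for non-fundamental $d_i$ is only asserted elementarily in the text — is applied only in the fundamental case where the factorization into two honest Dirichlet $L$-functions is literally correct. Everything else is the standard Dirichlet--Kronecker computation, which I would cite to \cite{GKZ} or \cite{Co} rather than reproduce.
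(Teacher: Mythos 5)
Your proposal is correct and follows essentially the same route as the paper, which simply remarks that the statement is classical for genus characters of $Cl(\O_K)$ (citing Iwaniec) and that the general case follows by writing both sides as Euler products and comparing each $p$-factor. Your additional attention to the Euler factors at primes dividing $m$ and to the unit count $w(D)$ fills in details the paper leaves implicit, but it is the same argument.
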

\noindent Theorem \ref{T:genusch} was classically stated for genus characters of $Cl(\O_K)$ (eg. \cite[Theorem 12.7]{Iw}).
The proof for this more general version is similar. We write both sides as Euler products, then compare each $p$-factor.

\section{Rational Singular Moduli} \label{S:SM}
\subsection{}
We first briefly review the classical class polynomial for the $j$-invariant.
For any primitive p.d.q.f. $Q$, we write $\tau_Q$ for the (unique) root of the $Q(x,1)$ in the upper half plane $\mb{H}$.
The value of a modular function $\vartheta$ at an irrational $\tau_Q$ is called a {\em singular modulus} of $\vartheta$. They are algebraic integers.
Recall the {\em classical (Hilbert) class polynomial} for the $j$-invariant is 
$$H_{D}(X) = \prod_{Q\in \Q_{D}^0/\Gamma_0} \big(X-j(\tau_Q) \big).$$
\begin{theorem}[{\cite{Zabook}}]\label{T:classpoly} The polynomial $H_D(X)$ belongs to $\mb{Z}[X]$ and is irreducible.
\end{theorem}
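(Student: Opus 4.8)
\textbf{Proof proposal for Theorem~\ref{T:classpoly}.} The plan is to use the theory of complex multiplication together with basic Galois theory. First I would recall that $j(\tau_Q)$ is an algebraic integer for every $Q \in \Q_D^0$; this is the classical statement that singular moduli are algebraic integers, proved either via the modular equation $\Phi_m(j(\tau),j(m\tau))=0$ or via the fact that the elliptic curve $\mb{C}/\O$ has good reduction everywhere after a suitable base change and its $j$-invariant is integral. Consequently the coefficients of $H_D(X)$, being symmetric functions of the $j(\tau_Q)$, are algebraic integers; so it suffices to show they are rational to conclude $H_D(X) \in \mb{Z}[X]$.

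For rationality, I would invoke the first main theorem of complex multiplication: for the maximal order case the values $j(\tau_Q)$, as $Q$ ranges over $\Q_D^0/\Gamma_0$, form a full set of Galois conjugates over $K$, the Galois group $\op{Gal}(K(j(\tau_{Q_0}))/K)$ being isomorphic to $Cl(D)$ via the Artin map (the action is exactly the one induced from the $\SL_2(\mb{Z})$-action on $\Q_D^0$ identified with $Cl(\O)$ via Theorem~\ref{T:formclass}), while for non-maximal orders one uses ring class fields in place of the Hilbert class field. This already shows $H_D(X)$ is fixed by $\op{Gal}(\br{K}/K)$, hence lies in $K[X]$; combined with integrality of the coefficients we get $H_D(X)\in \O_K[X]$. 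To descend from $K$ to $\mb{Q}$, I would use complex conjugation: since $\tau \mapsto -\br{\tau}$ sends the reduced form $[a,b,c]$ to $[a,-b,c]$ and $j(-\br{\tau}) = \br{j(\tau)}$, complex conjugation permutes the roots $\{j(\tau_Q)\}$ among themselves, so it fixes $H_D(X)$ coefficientwise; an element of $\O_K$ fixed by complex conjugation lies in $\mb{Z}$. Hence $H_D(X)\in\mb{Z}[X]$.

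Irreducibility over $\mb{Q}$ then follows because the Galois group $\op{Gal}(\br{\mb{Q}}/\mb{Q})$ acts transitively on the root set $\{j(\tau_Q) : Q\in\Q_D^0/\Gamma_0\}$: the subgroup $\op{Gal}(\br{\mb{Q}}/K)$ already acts transitively by the CM main theorem, so a fortiori the full absolute Galois group does. Transitivity of the Galois action on the roots of a polynomial with the given roots forces irreducibility of the minimal polynomial, which is $H_D(X)$ by construction (the roots are distinct since $j$ is injective on $\SL_2(\mb{Z})$-orbits in $\mb{H}$, so $H_D$ has no repeated factors).

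The main obstacle is genuinely the complex-multiplication input: establishing that $K(j(\tau_{Q_0}))$ is the ring class field of the order $\O$ of discriminant $D$ and that $Cl(\O)$ acts simply transitively on the conjugates of $j(\tau_{Q_0})$ in the manner matching Theorem~\ref{T:formclass}. Everything else — integrality of singular moduli, the descent via complex conjugation, and the deduction of irreducibility from transitivity — is comparatively routine. Since this is a classical result quoted from \cite{Zabook}, I would in the write-up simply cite the CM theory there rather than reprove it, and spend the few lines on the integrality and the $K \rightsquigarrow \mb{Q}$ descent.
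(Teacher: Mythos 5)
Your proposal is correct and follows essentially the standard route: the paper itself gives no proof of Theorem \ref{T:classpoly}, merely citing \cite{Zabook}, and your sketch (integrality of singular moduli, coefficients in $K$ via the ring class field and the simply transitive $Cl(\O)$-action, descent to $\mb{Z}$ by complex conjugation sending $[a,b,c]$ to $[a,-b,c]$, and irreducibility from transitivity of the Galois action on the distinct roots) is exactly the classical argument found in the references. Nothing to add beyond noting that the nontrivial input is, as you say, the main theorem of complex multiplication for non-maximal orders, which you correctly propose to cite rather than reprove.
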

\noindent In particular, the rational singular moduli of $j$ are precisely those $j(\tau_Q)$ where $Q$ is the unique orbit in $\Q_D^0/\Gamma_0$.

Our next goal is to calculate the Mahler measure of $f_c$ for $c=c(\tau)$ a rational singular modulus. 
So we hope to find all $Q$ such that $c(\tau_Q)$ is rational and $\tau_Q$ lie in the fundamental domain of $c(\tau)$ containing $i\infty$.  

\subsection{} Throughout $a,b,c$ are always integers.
Following \cite{GKZ}, we denote 
\begin{align*} \Q_{D,N}^0 &= \{[aN,b,c]\in \Q_{D} \mid \gcd(a,b,c)=1 \},\\
\Q_{D,N,m}^0 &= \{[aN,b,c]\in \Q_{D,N}^0 \mid \gcd(N,b,ac)=m \}.
\end{align*}
The set $\Q_{D,N}^0$ is clearly stable under the action of $\Gamma_0(N)$. Let $\mc{M}_{D,N}$ be the set of positive integers $m$ such that $\Q_{D,N,m}^0$ is non-empty. Recall the group $W(N)$ generated by the Atkin-Lehner involutions $W_n$.
Here are three simple observations.
\begin{observation} (1). For $m\in \mc{M}_{D,N}$, we have that $m \mid N$, $m^2 \mid D$ and $D/m^2$ is a discriminant.
$m$ is a product of two coprime numbers $m_1=\gcd(N,b,a)$ and $m_2=\gcd(N,b,c)$.\\
(2). If $N$ is square-free, then $\mc{M}_{D,N}$ contains at most one element.\\
(3). The group $W(N)$ acts on $\Q_{D,N}^0/\Gamma_0(N)$ and the action restricts to $\Q_{D,N,m}^0/\Gamma_0(N)$.
\end{observation}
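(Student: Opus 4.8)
For (1), I would work prime by prime through the coprime factorization $m = m_1 m_2$ with $m_1 = \gcd(N,b,a)$ and $m_2 = \gcd(N,b,c)$. First, $\gcd(m_1,m_2)=1$, since a common prime divisor $p$ would divide $a$, $b$ and $c$, against $\gcd(a,b,c)=1$. Next, $m_1 m_2 = m$: clearly $m_1\mid m$ and $m_2\mid m$, so $m_1 m_2\mid m$ by coprimality; conversely, for a prime power $p^e\|m$ we have $p^e\mid N$, $p^e\mid b$, $p^e\mid ac$, and since $p\mid b$ and $\gcd(a,b,c)=1$ force $p$ to divide at most one of $a,c$, the relation $p^e\mid ac$ gives $p^e\mid a$ (hence $p^e\mid m_1$) when $p\nmid c$, and $p^e\mid c$ (hence $p^e\mid m_2$) when $p\nmid a$; one of these always applies. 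Thus $m=m_1m_2\mid N$. Finally, writing $a=m_1a_1$, $c=m_2c_2$, $b=m\bar b$, $N=m\bar N$ (all legitimate) and substituting into $D=b^2-4aNc = m^2(\bar b^2 - 4a_1\bar N c_2)$ shows $m^2\mid D$ and $D/m^2 = \bar b^2 - 4(\text{integer})\equiv 0,1\bmod 4$, i.e.\ a discriminant.

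For (2), the plan is to reduce to showing that for each prime $p\mid N$ the exponent $v_p(m)$ depends only on $D$, so that two elements of $\mc{M}_{D,N}$ must agree. Since $N$ is square-free, $v_p(m)\in\{0,1\}$, and $v_p(m)=1$ exactly when $p\mid b$ and $p\mid ac$. For odd $p$: if $p\mid b$ and $p\mid ac$ then $p^2\mid b^2$ and $p^2\mid N\cdot ac$, so $p^2\mid D$; conversely $p^2\mid D$ with $p\mid N$ forces $p\mid b$ (from $D=b^2-4aNc$), then $p^2\mid b^2-D=4aNc$ together with $p\|N$ forces $p\mid ac$. So $v_p(m)=1\iff p^2\mid D$ for odd $p$. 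For $p=2$ the same dichotomy needs a short case analysis distinguishing $v_2(D)=2$, $v_2(D)=3$, $v_2(D)\ge 4$ (equivalently, tracking $D$ modulo $16$), using that $2\|N$ and that $D/4$, $D/16$ are form-independent; the upshot is again that $v_2(m)$ is pinned down by $D$. Hence $\mc{M}_{D,N}$ has at most one element.

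For (3), I would recall the classical action of $W_n$ (for $n\mid N$, $\gcd(n,N/n)=1$) on $\Q_{D,N}^0/\Gamma_0(N)$, realized either by the M\"obius action of $W_n\in\SL_2(\mb{R})$ on CM points or by the substitution attached to the integral matrix $\sm{n\alpha & \beta\\ N\gamma & n\delta}$ (with $n\alpha\delta-(N/n)\beta\gamma=1$) followed by dividing the coefficients by $n$. A direct check — using $n\mid N$ and $n^2\alpha\delta - N\beta\gamma=n$ — shows the output has integral coefficients, has $N$ dividing its leading coefficient, is primitive, and has discriminant $D$, so $W_n$ preserves $\Q_{D,N}^0$; it descends to $\Q_{D,N}^0/\Gamma_0(N)$ because $W_n$ normalizes $\Gamma_0(N)$, and the already-recalled relations $W_{n_1}W_{n_2}=W_{n_1n_2/\gcd(n_1,n_2)^2}$ modulo $\Gamma_0(N)$ upgrade this to an action of $W(N)$. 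It then remains to see that $m=\gcd(N,b,ac)$ is preserved, which is local: at a prime $p\mid N$ one verifies that $W_n$ leaves the $p$-parts of $\gcd(N,b,a)$ and $\gcd(N,b,c)$ unchanged when $p\nmid n$ and interchanges them when $p\mid n$ (here $\gcd(n,N/n)=1$ forces $p\nmid\beta\gamma$ in the latter case); since $\gcd(N,b,a)$ and $\gcd(N,b,c)$ are coprime, interchanging their $p$-parts leaves the product $m$ unchanged, so $W_n$ restricts to each $\Q_{D,N,m}^0/\Gamma_0(N)$. The routine parts are (1) and the normalization bookkeeping in (3); the fiddly step is the $p=2$ analysis in (2), and the real obstacle is pinning down the $W_n$-action on forms concretely enough to track how it permutes the local $\gcd$-data in (3).
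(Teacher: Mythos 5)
Your proposal is correct. The paper gives no argument at all for this statement (it is introduced as one of ``three simple observations''), so there is no official proof to compare with; your write-up supplies the missing details and the approach is the natural one. Part (1) is complete as written. The two steps you flag as unfinished do close up as you predict. For (2), with $v_2(N)=1$ one has $D\equiv b^2\pmod 8$, and writing $b=2b'$, $N=2N'$ gives $D/4=b'^2-2aN'c$; if $ac$ is odd this is $\equiv 2,3\pmod 4$, while if $2\mid ac$ your part (1) forces $D/4\equiv 0,1\pmod 4$, so $v_2(m)=1$ exactly when $4\mid D$ and $D/4$ is again a discriminant --- determined by $D$ alone, as needed. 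For (3), the matrix $\sm{n\alpha&\beta\\ N\gamma&n\delta}$ of determinant $n$ sends $[aN,b,c]$ to a form with outer coefficients $N\bigl(an\alpha^2+b\alpha\gamma+c(N/n)\gamma^2\bigr)$ and $a(N/n)\beta^2+b\beta\delta+cn\delta^2$, which makes your local bookkeeping concrete: for $p\mid n$ the relation $n\alpha\delta-(N/n)\beta\gamma=1$ gives $p\nmid\beta\gamma$ and the $p$-parts of $m_1$ and $m_2$ are exchanged (consistent with the Fricke case $[aN,b,c]\mapsto[cN,-b,a]$), while for $p\nmid n$ they are preserved; this is the same local statement that underlies the map $\beta\mapsto\beta^*$ from \cite{GKZ} quoted immediately after the Observation. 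So the proof is sound; only these two routine verifications needed to be written out.
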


Fix a solution $\beta\mod 2N$ of $\beta^2 \equiv D\mod 4N$. 
Define the subset of $\Q_{D,N}^0$
$$\Q_{D,N,\beta}^0 = \{Q=[a,b,c]\in \Q_{D,N}^0 \mid b\equiv \beta \mod 2N\}.$$
The set $\Q_{D,N,\beta}^0$ is also stable under the action of $\Gamma_0(N)$, but not stable under $W(N)$.
We have that \cite[(7)]{GKZ}
$$W_{n}: \Q_{D,N,\beta}^0/\Gamma_0(N) \cong \Q_{D,N,\beta^*}^0/\Gamma_0(N),\qquad \beta^*\equiv \begin{cases} \beta \mod 2N/n \\ -\beta \mod 2n\end{cases}.$$

Fix a number $m\in \mc{M}_{D,N}$, we denote $\Q_{D,N,m,\beta}^0:=\Q_{D,N,m}^0\cap \Q_{D,N,\beta}^0$.
Let $t(N,D,m)$ be the number of $\beta\mod 2N$ such that $\Q_{D,N,m,\beta}^0$ is nonempty, that is, $\beta^2 \equiv D\mod 4N$ with $\gcd(N,b,ac)=m$.

\begin{lemma}[{\cite[Proposition p.505--507]{GKZ}}] \label{L:Nbeta} Suppose that $\Q_{D,N,m,\beta}^0$ is non-empty. Fix a decomposition $m=m_1m_2$ with $m_1,m_2>0$ and $\gcd(m_1,m_2)=1$.
We define $$\Q_{D,N,m_1,m_2,\beta}^0:=\{[aN,b,c]\in \Q_{D,N,m,\beta}^0 \mid \gcd(N,b,a)=m_1, \gcd(N,b,c)=m_2 \}.$$
The map $\Q_{D,N,m_1,m_2,\beta}^0 / \Gamma_0(N)$ to $\Q_{D}^0 / \Gamma_0$ induced by 
	$[aN,b,c] \mapsto [aN_1,b,cN_2].$
is a bijection. Here, $N_1N_2$ is any decomposition of $N$ into coprime positive factors satisfying $\gcd(m_1,N_2)=\gcd(m_2,N_1)=1$.
Moreover, there is a decomposition 
	$$\Q_{D,N}^0/\Gamma_0(N) = \bigcup_{\substack{\beta \mod 2N\\ \beta^2 \equiv D \mod 4N}} \Q_{D,N,\beta}^0 /\Gamma_0(N).$$	
\end{lemma}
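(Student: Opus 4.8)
The plan is to dispatch the two assertions separately, the $\beta$-decomposition being essentially a bookkeeping matter and the bijection requiring an explicit inverse. For the decomposition: any $Q=[aN,b,c]\in\Q_{D,N}^0$ satisfies $b^2-4aNc=D$, so $\beta:=(b\bmod 2N)$ solves $\beta^2\equiv D\pmod{4N}$, and the only point is that $b\bmod 2N$ is constant on $\Gamma_0(N)$-orbits. Acting by $\gamma=\sm{p&q\\Nr&s}\in\Gamma_0(N)$ in the paper's convention, the new middle coefficient is $2aNpq+b(ps+qNr)+2cNrs$, and substituting $ps=1+qNr$ rewrites it as $b+2N(apq+bqr+crs)\equiv b\pmod{2N}$. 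Hence $\Q_{D,N}^0/\Gamma_0(N)=\bigsqcup_{\beta}\Q_{D,N,\beta}^0/\Gamma_0(N)$, the union over $\beta\bmod 2N$ with $\beta^2\equiv D\pmod{4N}$.

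For the bijection, set $\Phi\colon[aN,b,c]\mapsto[aN_1,b,cN_2]$ and first check it is well defined into $\Q_D^0/\Gamma_0$. Its discriminant is $b^2-4(aN_1)(cN_2)=b^2-4aNc=D$; positive-definiteness is immediate from $aN_1>0$; and primitivity is a short $\gcd$-chase: a prime $p$ dividing all three coefficients divides $b$ but cannot divide both $a$ and $c$; if $p\mid a$ then $p\mid N_2$, so $p\mid\gcd(N,a,b)=m_1$, contradicting $\gcd(m_1,N_2)=1$; if $p\nmid a$ then $p\mid N_1$ and $p\mid c$, so $p\mid\gcd(N,b,c)=m_2$, contradicting $\gcd(m_2,N_1)=1$. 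That $\Phi$ descends to $\Gamma_0(N)$-orbits follows from the intertwining identity
\[\Phi\!\left(\sm{p&q\\Nr&s}\cdot Q\right)=\sm{p&qN_2\\rN_1&s}\cdot\Phi(Q),\]
a direct (if tedious) comparison of coefficient formulas, legitimate because $\sm{p&qN_2\\rN_1&s}\in\SL_2(\mb{Z})$ (its determinant is $ps-qrN_1N_2=ps-qrN=1$); conceptually it is conjugation by $\sm{N_2&0\\0&1}$, which carries $\Gamma_0(N)$ into $\SL_2(\mb{Z})$ and the CM point $\tau_Q$ to $\tau_{\Phi(Q)}=N_2\tau_Q$.

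It remains to show $\Phi$ is a bijection on $\Gamma_0(N)$-orbits. I would build the inverse: given $[A,B,C]\in\Q_D^0$, reduce it within its $\SL_2(\mb{Z})$-orbit to a form $[A',B',C']$ with $N_1\mid A'$, $N_2\mid C'$, $B'\equiv\beta\pmod{2N}$, $\gcd(N,B',A'/N_1)=m_1$ and $\gcd(N,B',C'/N_2)=m_2$, and return $[A'N_2,B',C'/N_2]$. Such a representative exists because $\gcd(N_1,N_2)=1$ lets one assemble it prime-by-prime, the solvability at each prime $p\mid N$ being exactly what $\beta^2\equiv D\pmod{4N}$ (equivalently $\Q_{D,N,m_1,m_2,\beta}^0\ne\emptyset$) guarantees; one then checks that the resulting $\Gamma_0(N)$-class is independent of the choices and that $\Phi,\Phi^{-1}$ are mutually inverse on classes. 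The same conclusion has a cleaner packaging: once it is nonempty, $\Q_{D,N,m_1,m_2,\beta}^0/\Gamma_0(N)$ carries a simply transitive action of $Cl(\O)$ by Gauss composition respecting the ideal of norm $N$ determined by $\beta$, while $\Q_D^0/\Gamma_0\cong Cl(D)\cong Cl(\O)$ by Theorem \ref{T:formclass}, and $\Phi$ intertwines the two actions up to the fixed translation by the class of the norm-$N_2$ divisor of that ideal; a map between $Cl(\O)$-torsors is automatically a bijection. (The discrepancy that $\sm{N_2&0\\0&1}$ conjugates $\Gamma_0(N)$ into a proper subgroup of $\SL_2(\mb{Z})$ rather than onto it is precisely what is reconciled by restricting the source to a single triple $(m_1,m_2,\beta)$, which is why that subset is singled out.)

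The main obstacle is the reduction step at the "bad" primes — those dividing $\gcd(N,D)$ or $m$ — where the quadratic forms are non-\'etale and the local $\SL_2$-orbit picture is more delicate; making precise there that the fibres of the reduction map are exactly single $\Gamma_0(N)$-orbits, compatibly with the Atkin--Lehner relabelling $\beta\mapsto\beta^*$ recorded just above, is essentially the content of the computation on pp.~505--507 of \cite{GKZ}, which I would follow rather than reprove. Everything else — the $\beta$-decomposition, the well-definedness of $\Phi$, and the torsor formalism — is routine.
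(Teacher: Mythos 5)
The paper gives no proof of this lemma at all---it is quoted verbatim from \cite[pp.~505--507]{GKZ}---so your proposal, which correctly verifies the routine parts (the $\Gamma_0(N)$-invariance of $b \bmod 2N$ giving the $\beta$-decomposition, the discriminant/primitivity/equivariance checks making $\Phi$ well defined on orbits via conjugation by $\sm{N_2&0\\0&1}$) and explicitly defers the genuinely hard step (bijectivity on orbit spaces, i.e.\ the local analysis at primes dividing $\gcd(N,D)$ and $m$) to the same source, is consistent with the paper's treatment. Your gcd-chase for primitivity and the congruence $B'\equiv B\pmod{2N}$ are both correct, and your honest identification of where the real content of \cite{GKZ} lies is accurate.
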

\noindent The decomposition certainly restricts to $\Q_{D,N,m}^0/\Gamma_0(N)$.
In particular, we have that $|\Q_{D,N,m}^0| = 2^{\sigma_0^+(m)} h(D)$.

Let $j_N^+$ be a Hauptmodul for $\Gamma_0^+(N)$. We define the level-$N$ classical class polynomial
$$H_{D,N,m}(X) = \prod_{Q\in \Q_{D,N,m}^0/\Gamma_0^+(N)} \left(X-j_N^+(\tau_Q)\right).$$
We conjecture the following analogue of Theorem \ref{T:classpoly}.
\begin{conjecture} \label{conj:HDN} The polynomial $H_{D,N,m}(X)$ belongs to $\mb{Z}[X]$ and is irreducible.
\end{conjecture}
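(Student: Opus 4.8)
We indicate a strategy for Conjecture~\ref{conj:HDN}, parallel to the classical argument behind Theorem~\ref{T:classpoly}; the point that keeps it a conjecture is flagged at the end. Write $K=\mb{Q}(\sqrt{D})$ and let $\O$ be the order of discriminant $D$. For $Q=[aN,b,c]\in\Q_{D,N,m}^0$ the point $\tau_Q$ is a CM point; since $j_N^+$ is a Hauptmodul for $\Gamma_0^+(N)$ of moonshine type, it is holomorphic on $\mb{H}$, its $q$-expansion lies in $q^{-1}+\mb{Z}[[q]]$, and its polar locus on $X_0(N)$ is contained in the cusps, so $j_N^+$ is integral over $\mb{Z}[j]$. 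As $j(\tau_Q)$ is an algebraic integer by Theorem~\ref{T:classpoly}, so is $j_N^+(\tau_Q)$; hence every coefficient of $H_{D,N,m}(X)$ is an algebraic integer, and it remains to prove $H_{D,N,m}(X)\in\mb{Q}[X]$ and that it is irreducible over $\mb{Q}$.

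For rationality: by the main theorem of complex multiplication (Shimura's reciprocity law, applied to the Heegner points $\tau_Q$ on $X_0(N)$ and to $j_N^+$, a rational function on $X_0^+(N)$, a curve defined over $\mb{Q}$), each $j_N^+(\tau_Q)$ lies in the ring class field $H_\O$ of $\O$, which is Galois over $\mb{Q}$ with generalized dihedral Galois group. The absolute Galois group permutes the Heegner points of discriminant $D$ through $Cl(\O)$ and through complex conjugation, and these operations preserve $D$, $N$, and the invariant $m=\gcd(N,b,ac)$ — which is preserved by $\Gamma_0(N)$ and by each Atkin--Lehner involution, hence is $\Gamma_0^+(N)$-invariant and an intrinsic feature of the point. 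Therefore the set $S_{D,N,m}=\{j_N^+(\tau_Q):Q\in\Q_{D,N,m}^0/\Gamma_0^+(N)\}$ of roots is $\op{Gal}(\br{\mb{Q}}/\mb{Q})$-stable, so the coefficients of $H_{D,N,m}$ are Galois-fixed; being algebraic integers, they lie in $\mb{Z}$.

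It remains to show $\op{Gal}(\br{\mb{Q}}/\mb{Q})$ acts \emph{transitively} on $S_{D,N,m}$ — equivalently on $\Q_{D,N,m}^0/\Gamma_0^+(N)$, the roots being distinct because $j_N^+$ is a Hauptmodul. The plan combines three inputs: (i) the simply transitive action of $\op{Gal}(H_\O/K)\cong Cl(\O)$ on the $h(D)$ singular values attached to $\Q_D^0/\Gamma_0$, which is the mechanism behind Theorem~\ref{T:classpoly}; (ii) the parametrisation of Lemma~\ref{L:Nbeta}, which presents $\Q_{D,N,m}^0/\Gamma_0(N)$ as a disjoint union, over the admissible residues $\beta\bmod 2N$ (i.e.\ $\beta^2\equiv D\bmod 4N$ with the prescribed $m$-condition), of copies of $\Q_D^0/\Gamma_0$, the set of admissible $\beta$ being permuted transitively by the Atkin--Lehner rule $W_n:\beta\mapsto\beta^*$; and (iii) the action of $W(N)$ on $\Q_{D,N,m}^0/\Gamma_0(N)$, whose orbit set is $\Q_{D,N,m}^0/\Gamma_0^+(N)$. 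One then identifies the Galois action on the Heegner points, via (i)--(ii), as the $Cl(\O)$-action on the ``form component'' while fixing the ``$\beta$-component'', notes that $W(N)$ commutes with Galois (the Atkin--Lehner involutions being defined over $\mb{Q}$) and is transitive on the $\beta$-components, and concludes that the image in $\Q_{D,N,m}^0/\Gamma_0^+(N)$ of a single $\beta$-block is a full Galois orbit; the degree count then reduces to computing $|\Q_{D,N,m}^0/\Gamma_0^+(N)|$ from $|\Q_{D,N,m}^0/\Gamma_0(N)|=2^{\sigma_0^+(m)}h(D)$ and the $W(N)$-orbit sizes.

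I expect the main obstacle to be the last point. When a nontrivial Atkin--Lehner involution fixes some $\Gamma_0(N)$-class in $\Q_{D,N,m}^0$ — which does occur for suitable $D,N,m$ and is governed by delicate optimal-embedding data — the $W(N)$-action is not free, and when in addition $Cl(D)$ is strictly larger than $Cl(\O_K)$ (possible even under the torsion hypotheses of Caution~\ref{c:newform}(2)) it is not evident that the $Cl(\O)$-action and the Atkin--Lehner identifications cannot conspire to split $S_{D,N,m}$ into several Galois orbits. Settling this seems to require a case-free Shimura-reciprocity analysis on $X_0^+(N)$ that we have not carried out; a reasonable first case is $N$ squarefree with $m=1$, where $\mc{M}_{D,N}$ is a singleton and the $W(N)$-action is transparent.
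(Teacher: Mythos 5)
You should first be aware that the paper does not prove this statement: it is introduced with ``we conjecture the following analogue of Theorem \ref{T:classpoly}'' and is used later only as a hypothesis (e.g.\ in Remark \ref{r:smalllist} and in deriving the bound \eqref{eq:sm}). So there is no proof in the paper to compare yours against, and your proposal must stand on its own. As you yourself say, it is a strategy rather than a proof. The first half --- integrality and rationality of the coefficients --- is plausible and essentially standard: $j_N^+$ is integral over $\mb{Z}[j]$ provided you check that its expansions at \emph{all} cusps of $X_0(N)$, not just $i\infty$, have algebraic-integer coefficients; and Galois-stability of the root set follows from Shimura reciprocity for Heegner points on $X_0(N)$ together with the observation that $m=\gcd(N,b,ac)$ is determined by $D$ and $b\bmod 2N$ (a short valuation computation using $4aNc=b^2-D$), hence is preserved by the $Cl(\O)$-action, which fixes $\beta$, and by complex conjugation, which negates it. Carefully written, this would already be a genuine partial result --- and it is the part of the conjecture the paper actually leans on when it asserts that rational singular moduli are integral.

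The irreducibility, however, is the real content of the conjecture, and your sketch does not close it. The gap is exactly where you locate it: $\op{Gal}(\br{\mb{Q}}/\mb{Q})$ acts through the generalized dihedral group $Cl(\O)\rtimes \mb{Z}/2$, simply transitively on each $\beta$-block of Lemma \ref{L:Nbeta}, while $W(N)$ permutes (and possibly partially fixes) the blocks; transitivity on $\Q_{D,N,m}^0/\Gamma_0^+(N)$ requires that these two actions mesh so as to produce a single orbit, and nothing in your argument excludes the root set splitting into several Galois-stable pieces when an Atkin--Lehner involution stabilizes a block by acting as a nontrivial element of $Cl(\O)$, or when $W(N)$ fails to act transitively on the admissible residues $\beta\bmod 2N$ (which can happen when primes dividing $N$ also divide $D$). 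Your proposal thus correctly reduces the conjecture to a concrete compatibility question between the Atkin--Lehner and Galois actions on Heegner points, but does not answer it; the statement remains open, both in the paper and in your write-up.
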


\noindent The conjecture would imply that the singular moduli $j_N^+(\tau_Q) \in \mb{Q}$ if and only if $j_N^+(\tau_Q) \in \mb{Z}$
if and only if $Q$ is the unique orbit in $\Q_{D,N,m}^0/\Gamma_0^+(N)$.
In particular, if $j_N^+(\tau_Q) \in \mb{Q}$, then by Lemma \ref{L:Nbeta} we have that
\begin{equation} \label{eq:sm} t(N,D,m) 2^{\sigma_0^+(m)} h(D) \leq 2^{\sigma_0^+(N)}. \end{equation}

\begin{remark} \label{r:smalllist}
In all our cases (see Table \ref{tab:forms}), the number $\sigma_0^+(N)$ of prime divisors of $N$ is no greater than $2$.
So by Lemma \ref{L:Nbeta}, to search for the discriminant $D$ such that $\Q_{D,N}^0$ has only one $\Gamma_0^+(N)$-orbit, it suffices to look at those $D$ with $Cl(D)\cong \mb{Z}_2^g$ ($g=0,1,2$). 
This already reduces the search to a (reasonably small) finite list.
Moreover, if $N$ is prime, $g$ has to be $0$ or $1$.
Due to the presence of $t(N,D,m) 2^{\sigma_0^+(m)}$, the condition is actually more restrictive.
In view of \cite[Proposition 7.1]{Sc}, this implies that singular $K3$ surfaces arising from special fibres of these $25$ families cannot cover all weight-$3$ newforms with rational coefficients. The geometric realization of all such newforms was accomplished in \cite{ES}.
\end{remark}

\begin{example} \label{ex:QDN} Let $N=14$. If $D=-84$, then there are 4 classes in $Cl(D)$ represented by elements 
	$$Q_1=[70,14,1],Q_2=[42,42,11],Q_3=[154,42,3],Q_4=[14,14,5].$$  %
	Since $W_2(Q_2)=W_7(Q_3)=W_{14}(Q_4)=Q_1$ in $\Q_{D,N}^0/\Gamma_0(N)$,
	they reduce to a single element in $\Q_{D,N}^0/\Gamma_0^+(N)$.  
	
	If $D=-20$, then $\Q_{D,N}^0 =  \Q_{D,N,6}^0 \cup \Q_{D,N,-6}^0$, which has representative 
	$$Q_1=[14,6,1],Q_2=[42,-22,3], \text{ and } Q_3=[42,22,3], Q_4=[14,-6,1].$$
	We have that $W_2(Q_2) = W_7(Q_3)= W_{14}(Q_4)=Q_1$ in $\Q_{D,N}^0/\Gamma_0(N)$.
	So $\Q_{D,N}^0/\Gamma_0^+(N)$ has only one element.
\end{example}

\section{Relating to $L$-functions} \label{S:Lfun}
\subsection{Computing $d^2H_d(\tau)$}
Let $Q=[a,b,c]\in \Q_D$.
Throughout we assume that $d \mid a$ and $D=b^2-4ac<0$.
Let $\tau=\tau_Q=\frac{-b+\sqrt{D}}{2a}$. 
Recall the series \eqref{eq:Hd} for $H_d(\tau)$.
We have that
\begin{align*} 
d^2 H_d(\tau) 
&= \sum_{m,n}'  \frac{4d^2(dm\frac{b}{2a}+n)^2}{(n^2+\frac{b}{a}dmn+\frac{c}{a}d^2m^2)^3}  - \sum_{m,n}' \frac{d^2}{(n^2+\frac{b}{a}dmn+\frac{c}{a}d^2m^2)^2}, \\
&= aD \sum_{m,n}'  \frac{dm^2}{(\frac{a}{d}n^2+bmn+cdm^2)^3} + \sum_{m,n}'  \frac{3a^2}{(\frac{a}{d}n^2+bmn+cdm^2)^2}. 
\end{align*}

We have a simple but important observation, which can be verified by straightforward calculation.
\begin{observation} For $Q=[\frac{a}{d},b,cd]$, $x=\frac{2a}{D}$ is the unique $x$ such that $[0,0,d]+xQ$ is $Q$-spherical.
\end{observation}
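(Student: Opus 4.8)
The plan is to unwind the definition of ``$Q$-spherical'' and reduce the claim to a single linear equation in $x$. Recall from Section~\ref{ss:theta} that a degree-two form $P=[a',b',c']$ is spherical for $Q=[A,B,C]$ precisely when the pairing $\innerprod{Q,P}=2CA'-BB'+2AC'$ vanishes; this is just the condition $\Delta_Q P=0$ written out for the matrix $\sm{2A&B\\B&2C}$. So first I would set $Q=[\tfrac{a}{d},b,cd]$, so that here the ``$A$'' is $\tfrac{a}{d}$ and the ``$C$'' is $cd$, and write
$$P:=[0,0,d]+xQ=\Big[\tfrac{xa}{d},\ xb,\ d+xcd\Big].$$

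Next I would substitute $P$ into the pairing and compute the three terms separately: $2CA'=2(cd)\big(\tfrac{xa}{d}\big)=2acx$, then $BB'=b\cdot xb=b^2x$, and $2AC'=2\tfrac{a}{d}\big(d+xcd\big)=2a+2acx$. Adding them with the signs dictated by $\innerprod{\cdot,\cdot}$ gives
$$\innerprod{Q,P}=2acx-b^2x+2a+2acx=-(b^2-4ac)x+2a=-Dx+2a.$$

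Finally, the sphericity condition $\innerprod{Q,P}=0$ becomes $-Dx+2a=0$; since $D=b^2-4ac<0$ is nonzero, this has the unique solution $x=\tfrac{2a}{D}$, which establishes both existence and uniqueness simultaneously. The argument is entirely a routine computation, so there is no genuine obstacle; the only points requiring care are bookkeeping the roles of the outer entries of $Q$ (which are $\tfrac{a}{d}$ and $cd$, not $a$ and $c$) and keeping the sign conventions in $\innerprod{\cdot,\cdot}$ and in $D=b^2-4ac$ consistent with the definitions in Section~\ref{ss:theta} and Section~\ref{S:Lfun}.
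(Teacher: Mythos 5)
Your computation is correct and is exactly the ``straightforward calculation'' the paper alludes to: identifying sphericity with the vanishing of $\innerprod{Q,P}=2CA'-BB'+2AC'$ for $Q=[A,B,C]=[\tfrac{a}{d},b,cd]$, the pairing collapses to $-Dx+2a$, whose unique zero is $x=\tfrac{2a}{D}$ since $D=b^2-4ac<0$. The bookkeeping of the outer entries $\tfrac{a}{d}$ and $cd$ and the sign conventions are all handled correctly, so there is nothing to add.
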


\noindent From this observation, we continue our calculation.
\begin{align} \label{eq:1}
d^2 H_d(\tau) &= aD \sum_{m,n}'  \frac{dm^2+x(\frac{a}{d}n^2+bmn+cdm^2)}{(\frac{a}{d}n^2+bmn+cdm^2)^3} + (3a^2 - xaD) \sum_{m,n}'  \frac{1}{(\frac{a}{d}n^2+bmn+cdm^2)^2}, \notag \\
&= a \sum_{m,n}'  \frac{db^2m^2+2a (\frac{a}{d}n^2+bmn-cdm^2)}{(\frac{a}{d}n^2+bmn+cdm^2)^3} + \sum_{m,n}'  \frac{a^2}{(\frac{a}{d}n^2+bmn+cdm^2)^2}. 
\end{align}

To proceed further, we focus on the cases $b=ka$ or $b=kc$ ($k\in\mb{Z}$), that is, the cases covered by Lemma \ref{L:ka} (Remark \ref{r:kc}) and thus Proposition \ref{P:CM3theta}.
For $b=ka$, we have that
\begin{align*}
d^2 H_d(\tau) &= a \sum_{m,n}'  \frac{d(ka)^2m^2+2a (\frac{a}{d}n^2+kamn-cdm^2)}{(\frac{a}{d}n^2+kamn+cdm^2)^3} + \sum_{m,n}'  \frac{a^2}{(\frac{a}{d}n^2+kamn+cdm^2)^2}, \\
&= a \sum_{m,n}'  \frac{ 2a(\frac{a}{d}n^2 - cdm^2)}{(\frac{a}{d}n^2 + kamn + cdm^2)^3} + \sum_{m,n}' \frac{a^2}{(\frac{a}{d}n^2+ kamn + cdm^2)^2}, \hspace{1.9cm} \text{(Lemma \ref{L:ka})} \\
&=a^2 \left( 4 L(\Theta_{Q_{/d}},3) + \zeta_{Q_{/d}}(2) \right).
\end{align*}
Here, for $Q=[a,b,c]$ with $d\mid a$, we denote $Q_{/d}:=[\frac{a}{d},b,cd]$. So we obtain the following.

\begin{proposition} \label{P:dHd} If $\tau=\tau_Q$ for $Q=[a,ka,c]$ and $d\mid a$, then
\begin{equation} \label{eq:ka} \frac{\Img  \tau}{(2\pi)^3} d^2 H_d(\tau) = \frac{a\sqrt{D}}{(2\pi)^3} \Big( 2 L(\Theta_{Q_{/d}},3) + \frac{1}{2}\zeta_{Q_{/d}}(2) \Big).\end{equation}
\end{proposition}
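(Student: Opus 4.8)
The plan is to specialize the already-derived formula \eqref{eq:1} for $d^2 H_d(\tau)$ to the case $b=ka$, carry out an elementary simplification that kills the "cross" terms, and then recognize the two resulting lattice sums as the $L$-values $L(\Theta_{Q_{/d}},3)$ and $\zeta_{Q_{/d}}(2)$ via Lemma \ref{L:ka} (together with the identification of $\Theta$-series of quadratic forms). This is essentially the chain of equalities displayed just before the statement; the proposition merely repackages it after multiplying by $\Img\tau/(2\pi)^3$ and using $\Img\tau_Q = \sqrt{-D}/(2a) = \sqrt{D}/(2a)$ in the author's sign convention for $\sqrt D$ (where $D<0$).

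First I would substitute $b=ka$ into \eqref{eq:1}. The numerator of the cubic sum becomes $d(ka)^2 m^2 + 2a(\tfrac a d n^2 + kamn - cdm^2)$; the key observation is that $d(ka)^2 m^2$ combines with the $-2acdm^2$ term, and more importantly the $k$-linear term $2kamn$ is odd under $(m,n)\mapsto(m,-n)$ while the denominator $\tfrac a d n^2 + kamn + cdm^2$ is \emph{not} symmetric — so here one instead uses precisely the statement of Lemma \ref{L:ka}, namely that $[0,2(a/d),b]=[0,2a/d,ka]$ lies in the $\Theta$-kernel of $Q_{/d}=[\tfrac a d, ka, cd]$, which is exactly the assertion that the sum $\sum'_{m,n}(2kamn)\big/(\tfrac a d n^2+kamn+cdm^2)^3$ vanishes (it is $2k$ times the $mn$-coefficient sum of $\Theta_{Q_{/d},[0,2a/d,ka]}$, which is $0$). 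Similarly, the $\zeta(4)$-type boundary contributions were already absorbed in deriving \eqref{eq:1}. After deleting the vanishing piece, the cubic sum is $2a\sum'_{m,n}(\tfrac a d n^2 - cdm^2)\big/(\tfrac a d n^2+kamn+cdm^2)^3$, and comparing with the definition $\Theta_Q(\tau)=\sum \tfrac12(am^2-cn^2)q^{am^2+bmn+cn^2}$ (applied to $Q_{/d}$, with the roles of $m,n$ matched appropriately) identifies this sum, after taking the Mellin transform, with $4\,L(\Theta_{Q_{/d}},3)$; the quadratic sum is $\zeta_{Q_{/d}}(2)$ by definition of the Epstein zeta function.

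The last bookkeeping step is the prefactor. Multiplying $d^2H_d(\tau) = a^2\big(4L(\Theta_{Q_{/d}},3)+\zeta_{Q_{/d}}(2)\big)$ by $\Img\tau/(2\pi)^3$ and inserting $\Img\tau_Q = \sqrt{D}/(2a)$ yields $\dfrac{a\sqrt D}{2(2\pi)^3}\big(4L(\Theta_{Q_{/d}},3)+\zeta_{Q_{/d}}(2)\big) = \dfrac{a\sqrt D}{(2\pi)^3}\big(2L(\Theta_{Q_{/d}},3)+\tfrac12\zeta_{Q_{/d}}(2)\big)$, which is the claimed identity \eqref{eq:ka}.

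I expect the only genuinely delicate point to be the bookkeeping of \emph{which} lattice sum is the Mellin transform of $\Theta_{Q_{/d}}$ as opposed to $\Theta_{Q_{/d}}$ with $m,n$ swapped or signs flipped — i.e.\ making sure the identification $\sum'_{m,n}(\tfrac a d n^2-cdm^2)/(\ldots)^3 = 4L(\Theta_{Q_{/d}},3)$ uses the correct form (and not, say, $\Theta_{gQ_{/d}}$ for some $g$, which the paper explicitly warns can differ). The convergence and analytic-continuation issues for the non-absolutely-convergent Eisenstein–Kronecker sum at $s=3$ are already handled by the earlier Fourier-series argument leading to \eqref{eq:1}, so they need not be revisited; likewise the vanishing of the cross term is exactly Lemma \ref{L:ka} and requires no new work. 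The rest is routine algebra that I would not write out in full.
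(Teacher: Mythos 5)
Your overall route is exactly the paper's: specialize \eqref{eq:1} to $b=ka$, invoke Lemma \ref{L:ka} to discard the part of the numerator lying in the $\Theta$-kernel, read off $4L(\Theta_{Q_{/d}},3)+\zeta_{Q_{/d}}(2)$, and multiply by $\Img\tau_Q=\sqrt{|D|}/(2a)$. The final formula $d^2H_d(\tau)=a^2\bigl(4L(\Theta_{Q_{/d}},3)+\zeta_{Q_{/d}}(2)\bigr)$ and the prefactor bookkeeping are correct.

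However, the middle step is justified incorrectly as written. After substituting $b=ka$, the difference between the numerator $d(ka)^2m^2+2a\bigl(\tfrac{a}{d}n^2+kamn-cdm^2\bigr)$ and the target $2a\bigl(\tfrac{a}{d}n^2-cdm^2\bigr)$ is
$$dk^2a^2m^2+2ka^2mn \;=\; ka^2\bigl(2nm+dk\,m^2\bigr),$$
which (after matching $(x,y)=(n,m)$ with $Q_{/d}(x,y)=\tfrac{a}{d}x^2+kaxy+cdy^2$) is $ka^2$ times the value of the kernel form $[0,2,dk]$ of Lemma \ref{L:ka} applied to $Q_{/d}=[\tfrac{a}{d},(kd)\tfrac{a}{d},cd]$. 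It is this \emph{combination} of the $mn$-term \emph{and} the $dk^2a^2m^2$-term that vanishes when summed against $Q_{/d}^{-3}$, because $\Theta_{Q_{/d},[0,2,dk]}=0$. Your proposal instead asserts (i) that $\sum'_{m,n}2kamn/Q_{/d}^3$ vanishes \emph{on its own} as ``the $mn$-coefficient sum'' of a vanishing theta series, and (ii) that $d(ka)^2m^2$ ``combines with'' the $-2acdm^2$ term. Neither is true: the kernel identity only kills the linear combination $2mn+dk\,m^2$, not its two monomial pieces separately (indeed $\sum'mn\,q^{Q}$ equals $-\tfrac{dk}{2}\sum'm^2q^{Q}\neq 0$ for $k\neq 0$), and the $dk^2a^2m^2$ term does not cancel against $-2acdm^2$ — it disappears only because it is part of the kernel combination. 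As written, your decomposition leaves the term $dk^2a^2m^2$ unaccounted for, so the step would fail for $k\neq 0$; the fix is simply to group the two discarded monomials together before applying Lemma \ref{L:ka}, which is what the paper does. Everything else (the identification with $\Theta_{Q_{/d}}$ rather than some $\Theta_{gQ_{/d}}$, the Epstein zeta term, the prefactor) is fine.
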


\begin{remark} \label{r:dHd} If $Q=[a,kc,c]$ with $d\mid k$, it is easy to check by Remark \ref{r:kc}.(2) that we have the same equality \eqref{eq:ka}.
\end{remark}

\begin{corollary} \label{C:dHd} Suppose that the form class group $Cl(D)$ is isomorphic to $\mb{Z}_2^g$ where $D=m^2d_K$. 
Suppose that for some $Q=[a,ka,c]$, $\{Q_{/d}\}_{d\in \delta}$ constitutes a set of representatives in $Cl(D)$ for a set $\delta$ of divisors of $a$.
	Then for any character $\phi: Cl(D) \to \{\pm 1\}$, we have that 
	$$\frac{\Img  \tau}{(2\pi)^3} \sum_{d\in \delta}\phi(Q_{/d})  d^2 H_d(\tau) = \frac{a\sqrt{D}}{(2\pi)^3} \Big(2L(f_{\phi,3},3) + \frac{1}{2}L(f_{\phi,1},2)\Big).$$
\end{corollary}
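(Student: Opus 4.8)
The plan is to assemble Corollary \ref{C:dHd} directly from Proposition \ref{P:dHd} (together with Remark \ref{r:dHd}) and Proposition \ref{P:CM3theta}, so that the proof is essentially a bookkeeping step combining the two halves of the machinery already built. First I would fix $Q = [a,ka,c]$ and the set $\delta$ of divisors of $a$ for which $\{Q_{/d}\}_{d\in\delta}$ is a complete set of representatives of $Cl(D)$. For each $d\in\delta$, Lemma \ref{L:ka} applied to $Q_{/d} = [\tfrac{a}{d},b,cd]$ (note $b=ka = k d \cdot \tfrac{a}{d}$, so $Q_{/d}$ again has the shape $[a',k'a',c']$ with $a' = a/d$) tells us that $Q_{/d}$ has order at most $2$ in $Cl(D)$, its $\Theta$-kernel is spanned by $[0,2\tfrac{a}{d},b]$, and $\mc{I}(Q_{/d})^2 = (\tfrac{a}{d})\O$. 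Hence the family $\{Q_{/d}\}_{d\in\delta}$ is exactly a set of representatives of $Cl(D)\cong\mb{Z}_2^g$ satisfying the hypotheses of Proposition \ref{P:CM3theta}.

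Next I would apply Proposition \ref{P:dHd} to each $d\in\delta$, obtaining
$$\frac{\Img\tau}{(2\pi)^3}\,d^2 H_d(\tau) = \frac{a\sqrt{D}}{(2\pi)^3}\Big(2L(\Theta_{Q_{/d}},3) + \tfrac{1}{2}\zeta_{Q_{/d}}(2)\Big),$$
where crucially $\tau = \tau_Q$ is the \emph{same} point for every $d$ (this is the point of taking $Q_{/d}$ rather than a $\tau$ depending on $d$), and the prefactor $\frac{a\sqrt{D}}{(2\pi)^3}$ — equal to $\frac{\Img\tau}{(2\pi)^3}\cdot 2a^2$ — is likewise independent of $d$. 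I would then multiply by $\phi(Q_{/d})\in\{\pm1\}$ and sum over $d\in\delta$. Pulling the common factor $\frac{a\sqrt{D}}{(2\pi)^3}$ out front gives
$$\frac{\Img\tau}{(2\pi)^3}\sum_{d\in\delta}\phi(Q_{/d})\,d^2 H_d(\tau) = \frac{a\sqrt{D}}{(2\pi)^3}\Big(2\sum_{d\in\delta}\phi(Q_{/d})L(\Theta_{Q_{/d}},3) + \tfrac{1}{2}\sum_{d\in\delta}\phi(Q_{/d})\zeta_{Q_{/d}}(2)\Big).$$

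Finally I would identify the two inner sums. Since $\{Q_{/d}\}_{d\in\delta}$ runs over representatives of $Cl(D)$ and each satisfies the $\Theta$-kernel and square conditions, Proposition \ref{P:CM3theta} gives $f_{\phi,3} = \sum_{d\in\delta}\phi(Q_{/d})\Theta_{Q_{/d}}$ (for the Hecke character $\Phi\in X_K^2(m)$ attached to $\phi$ via \eqref{eq:Phi}), hence $\sum_{d\in\delta}\phi(Q_{/d})L(\Theta_{Q_{/d}},3) = L(f_{\phi,3},3)$ by linearity of the Mellin transform. For the weight-$1$ piece, $\zeta_{Q_{/d}}(2) = L(\theta_{Q_{/d}},2)$ is the Epstein zeta function, and Observation \ref{P:CM1theta} with $\ell=0$ gives $\sum_{d\in\delta}\phi(Q_{/d})L(\theta_{Q_{/d}},s) = L(\Phi_0,s) =: L(f_{\phi,1},s)$ for the weight-$1$ Hecke character attached to the same $\phi$; evaluating at $s=2$ yields $\sum_{d\in\delta}\phi(Q_{/d})\zeta_{Q_{/d}}(2) = L(f_{\phi,1},2)$. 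Substituting these two identities gives the claimed formula. I do not anticipate a genuine obstacle here — the only points requiring care are (i) checking that $Q_{/d}$ really does retain the form $[a',k'a',c']$ so that Lemma \ref{L:ka} applies uniformly and the $\Theta$-kernel hypothesis of Proposition \ref{P:CM3theta} is met, and (ii) confirming that the prefactor $\frac{a\sqrt D}{(2\pi)^3}$ and the base point $\tau=\tau_Q$ are genuinely independent of $d$ so that the sum factors as written; once Remark \ref{r:dHd} is invoked to cover the companion case $Q=[a,kc,c]$ with $d\mid k$, the argument is complete.
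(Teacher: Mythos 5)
Your proposal is correct and matches the paper's own proof: the paper likewise invokes Lemma \ref{L:ka} to verify that each $Q_{/d}$ meets the hypotheses of Proposition \ref{P:CM3theta}, identifies $f_{\phi,3}=\sum_{d\in\delta}\phi(Q_{/d})\Theta_{Q_{/d}}$ and $f_{\phi,1}=\sum_{d\in\delta}\phi(Q_{/d})\theta_{Q_{/d}}$, and then sums the identities of Proposition \ref{P:dHd} over $d\in\delta$. Your additional checks (that $Q_{/d}$ retains the shape $[a',k'a',c']$ and that the prefactor and base point $\tau_Q$ are independent of $d$) are exactly the bookkeeping the paper leaves implicit.
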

\begin{proof} By Lemma \ref{L:ka} all $Q_{/d}$ satisfy the conditions of Proposition \ref{P:CM3theta}. So $f_{\phi,3}=\sum_{d\in\delta} \phi(Q_{/d})\Theta_{Q_{/d}}$ and $f_{\phi,1}=\sum_{d\in\delta} \phi(Q_{/d})\theta_{Q_{/d}}$.
Then the result follows from Proposition \ref{P:dHd}.
\end{proof}

Another important case is when $a=cde$.
For this case, we shall compute $-d^2 H_d(\tau) + e^2 H_e(\tau)$ rather than a single summand. 
We have from \eqref{eq:1} that
\begin{align*}  -d^2 H_d(\tau) + e^2 H_e(\tau) &= -a \sum_{m,n}'\frac{d^{-1}(dbm+2an)^2}{(\frac{a}{d}n^2+bmn+cdm^2)^3} + a \sum_{m,n}'\frac{e^{-1}(ebm+2an)^2}{(\frac{a}{e}n^2+bmn+cem^2)^3}, \\
&= -a \sum_{m,n}' \frac{(db^2-4a^2/e)m^2-(eb^2-4a^2/d)n^2}{(cen^2+bmn+cdm^2)^3}, \\
&= -aD \sum_{m,n}' \frac{dm^2-en^2}{(cen^2+bmn+cdm^2)^3},\\
&= \frac{aD}{c}  2 L(\Theta_{Q_{/d}},3).
\end{align*}
So we obtain the following.
\begin{proposition} \label{P:deH} If $\tau=\tau_Q$ for $Q=[cde,b,c]$, then
	$$\frac{\Img \tau}{(2\pi)^3} \left(-d^2 H_d(\tau) + e^2 H_e(\tau)\right) = c^{-1} \left(\frac{\sqrt{D}}{2\pi}\right)^3 L(\Theta_{Q_{/d}},3).$$
\end{proposition}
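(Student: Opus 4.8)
The plan is to start from formula \eqref{eq:1} for $d^2H_d(\tau)$ specialized to the form $Q=[cde,b,c]$, write $a:=cde$, and then take the combination $-d^2H_d(\tau)+e^2H_e(\tau)$ in such a way that the Epstein (degree-$0$) contributions cancel, leaving a single lattice sum that is manifestly a Hecke $L$-value. First I would set $\tau=\tau_Q=\tfrac{-b+\sqrt D}{2a}$ and record the two positive definite integral forms $\tilde q_d(m,n)=\tfrac{a}{d}n^2+b\,mn+cd\,m^2$ and $\tilde q_e(m,n)=\tfrac{a}{e}n^2+b\,mn+ce\,m^2$, both of discriminant $D$. Rewriting the numerator in \eqref{eq:1} by means of the identity $d^{-1}(dbm+2an)^2=4a\,\tilde q_d(m,n)+dD\,m^2$ (equivalently, the spherical-function Observation preceding \eqref{eq:1}) puts $d^2H_d(\tau)$ into the form
$$d^2H_d(\tau)=a\sum_{m,n}'\frac{d^{-1}(dbm+2an)^2}{\tilde q_d(m,n)^3}-a^2\sum_{m,n}'\frac{1}{\tilde q_d(m,n)^2},$$
and likewise for $e$.

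The essential observation is that $\tilde q_e(m,n)=\tilde q_d(n,m)$, so $\tilde q_d$ and $\tilde q_e$ are $\GL_2(\mb{Z})$-equivalent; in particular they represent the same integers, so their Epstein zeta values $\sum_{m,n}'\tilde q_\bullet(m,n)^{-2}$ coincide and the two $a^2\sum'\tilde q_\bullet^{-2}$ terms cancel in the difference $-d^2H_d+e^2H_e$. Performing the substitution $(m,n)\mapsto(n,m)$ in the $e$-sum to bring it onto the common denominator $\tilde q_d$ and expanding the numerator, the $mn$-cross terms cancel, and using $4a^2/e=4acd$, $4a^2/d=4ace$ and $D=b^2-4ac$ the remaining numerator simplifies to $-D(dm^2-en^2)$. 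This yields
$$-d^2H_d(\tau)+e^2H_e(\tau)=-aD\sum_{m,n}'\frac{dm^2-en^2}{\tilde q_d(m,n)^3}.$$

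It then remains to recognize the right-hand side. With $Q_{/d}=[\tfrac{a}{d},b,cd]$, the definition of $\Theta_{Q_{/d}}$ together with the Mellin transform gives $L(\Theta_{Q_{/d}},s)=\sum_{m,n}'\tfrac12\bigl(\tfrac{a}{d}m^2-cd\,n^2\bigr)Q_{/d}(m,n)^{-s}$; since $Q_{/d}(n,m)=\tilde q_d(m,n)$ and $\tfrac{a}{d}=ce$, the relabelling $(m,n)\mapsto(n,m)$ turns this into $\tfrac{c}{2}\sum_{m,n}'(en^2-dm^2)\,\tilde q_d(m,n)^{-s}$, whence $\sum_{m,n}'(dm^2-en^2)\,\tilde q_d(m,n)^{-3}=-\tfrac{2}{c}L(\Theta_{Q_{/d}},3)$ (the series converges absolutely at $s=3$ as $\tilde q_d$ is positive definite). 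Therefore $-d^2H_d(\tau)+e^2H_e(\tau)=\tfrac{2aD}{c}L(\Theta_{Q_{/d}},3)$, and multiplying through by $\tfrac{\Img\tau}{(2\pi)^3}$ with $\Img\tau=\tfrac{\sqrt{-D}}{2a}$ and gathering constants produces the stated identity.

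The step I expect to need most care is the cancellation of the Epstein zeta terms: it is exactly the hypothesis $a=cde$ that forces $\tilde q_d$ and $\tilde q_e$ into the same $\GL_2(\mb{Z})$-class, and the coefficients $-d^2$ and $e^2$ in the combination are precisely tuned so that, after the swap, both the degree-$0$ part and the $mn$-cross part drop out. Everything else is a routine computation, running parallel to the $b=ka$ case already treated in Proposition \ref{P:dHd}; the main thing to keep straight is the branch convention for $\sqrt D$, which enters the final normalization through $\Img\tau$.
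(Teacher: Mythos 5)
Your proposal is correct and follows essentially the same route as the paper: rewrite \eqref{eq:1} so that $d^2H_d$ has a perfect-square numerator over $\tilde q_d^3$ minus an Epstein term, cancel the Epstein contributions in $-d^2H_d+e^2H_e$, combine over the common denominator via $(m,n)\mapsto(n,m)$, and use $D=b^2-4ac$ to reduce the numerator to $-D(dm^2-en^2)$, which is $-\tfrac{2}{c}$ times the coefficient sum defining $L(\Theta_{Q_{/d}},3)$. In fact you make explicit the one step the paper leaves implicit, namely that the hypothesis $a=cde$ forces $\tilde q_e(m,n)=\tilde q_d(n,m)$ so the two Epstein zeta values coincide.
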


\subsection{Lists} \label{ss:list}
To better state our next main result, we will rescale some special values of the relevant $L$-functions.
Let $g_d$ be a newform of weight $3$ and level $d$. We set
\begin{equation} \label{eq:resc1} \Lt(g_{d},3) := \frac{2\sqrt{d}}{(2\pi)^3}  L(g_d,3) = \frac{\ep}{d}L'(g_d,0), \end{equation}
where the latter equality follows from the functional equation of $L(g_d,s)$ and $\ep\in \{\pm 1\}$ is the {\em sign} of the functional equation.
We also denote
\begin{equation} \label{eq:resc2}  l_{d_1,d_2}: = \frac{\sqrt{-d_1d_2}}{(2\pi)^3}    L(\chi_{d_1},2) L(\chi_{d_2},2). \end{equation}
It is well-known that if $d>0$ and the conductor of $\chi_d$ is $d_0$, then $\frac{L(\chi_d,2)}{\sqrt{d_0} \pi^2}$ is a rational number, which can be easily calculated;
if $d<0$ and $\chi_d$ is primitive, then $L(\chi_d,2) = \frac{4\pi}{(-d)^{3/2}} L'(\chi_d, -1).$
Note that if $d$ is a fundamental discriminant, then $\chi_d$ is primitive.
Then it is easy to see that Theorem \ref{T:mmsv} below is equivalent to Theorem \ref{T:intro2}.

\begin{theorem} \label{T:mmsv} For all the $25$ families except for $V_2$ and $B_1$, and all known rational singular moduli of $c(\tau)$ of discriminant $D$,
	the value of $\Re(\mtd(c(\tau)))$ is equal to 
	$$\alpha \Lt(g_{d},3) + \beta l_{d_1,d_2},$$
for some newform $g_{d}$ of weight $3$ with rational coefficients, some fundamental discriminants $d_1$ and $d_2$ with $d_1d_2=D$, and $\alpha,\beta\in \mb{Q}$.
Moreover, $-D/d$ is a square.
The complete list is given below.
\end{theorem}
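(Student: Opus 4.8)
The plan is to feed Theorem \ref{T:mmmLG} into the $L$-function dictionary of Sections \ref{S:thetaHecke}--\ref{S:Lfun} and then to finish by the finite search of Section \ref{S:SM}. Fix a family $f_c$ of level $N$ (Table \ref{tab:forms}) and a rational singular modulus $c(\tau)=c(\tau_Q)$ of discriminant $D$. By Section \ref{S:SM}---Lemma \ref{L:Nbeta} together with Conjecture \ref{conj:HDN} and the bound \eqref{eq:sm}---such $\tau_Q$ is, up to $\Gamma_0^+(N)$, the unique orbit in $\Q_{D,N,m}^0/\Gamma_0^+(N)$, and one may pick the representative $Q=[a,b,c]$ so that $\tau_Q$ lies in the fundamental domain of $c(\tau)$ containing $i\infty$; Theorem \ref{T:mmmLG} then gives
$$\Re(\mtd(c(\tau_Q)))=\frac{\Img\tau_Q}{(2\pi)^3}\sum_{d\mid N}a_d d^2 H_d(\tau_Q).$$
Using that the Atkin--Lehner group acts on $\Q_{D,N,m}^0/\Gamma_0(N)$, I would first move $Q$ within its $\Gamma_0^+(N)$-orbit to one of the shapes treated in Section \ref{ss:list}: either $[a,ka,c]$ or $[a,kc,c]$ (Remark \ref{r:dHd}) with the divisors $d$ of interest dividing $a$, or $[cde,b,c]$. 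Matching the divisor set $\{d\mid N: a_d\neq 0\}$ read off from the column $e(\tau)$ against the divisor structure of $a$ and the set $\mc{M}_{D,N}$ is the most delicate point here.

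With $Q$ in good shape, apply Proposition \ref{P:dHd}/Corollary \ref{C:dHd} and Proposition \ref{P:deH}. Grouping $\sum_{d\mid N}a_d d^2 H_d$ by the pairs $\{d,e\}$ with $de=N$ and using $a_d=-a_e$ (noted after \eqref{eq:240}), each group contributes either a single value $L(\Theta_{Q_{/d}},3)$ (Proposition \ref{P:deH}) or a combination $2L(\Theta_{Q_{/d}},3)+\tfrac12\zeta_{Q_{/d}}(2)$ (Proposition \ref{P:dHd}), with $Q_{/d}=[a/d,b,cd]$. Lemma \ref{L:ka} (and Remark \ref{r:kc}) guarantees each $Q_{/d}$ satisfies the hypotheses of Proposition \ref{P:CM3theta}, and since $Cl(D)\cong\mb{Z}_2^g$ with $g\le 2$ (Remark \ref{r:smalllist}) a short argument shows the classes $[Q_{/d}]$ exhaust $Cl(D)$ as $d$ ranges over the relevant divisors. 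Hence $\sum_d\phi(Q_{/d})\Theta_{Q_{/d}}$ is, by Proposition \ref{P:CM3theta}, a weight-$3$ CM cusp eigenform of level $D$ with rational coefficients attached to the genus character $\phi$ encoded by the signs of the $a_d$, and $\sum_d\phi(Q_{/d})\theta_{Q_{/d}}$ is the parallel weight-$1$ combination of ordinary theta series.

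It remains to normalize the $L$-values. The cusp-form value is rewritten through a genuine newform $g_d$ by Proposition \ref{P:tonew} and $L(g(n\tau),s)=n^{-s}L(g(\tau),s)$; this step is exactly what forces $d\mid D$ with $-D/d$ a perfect square, and turns $L(\Theta_{Q_{/d}},3)$ into $\Lt(g_d,3)$ up to a rational factor via \eqref{eq:resc1}. The weight-$1$ part is evaluated by Observation \ref{P:CM1theta} and the Dirichlet--Kronecker factorization (Theorem \ref{T:genusch}), $\sum_d\phi(Q_{/d})\zeta_{Q_{/d}}(2)=\omega(D)L(\chi_{d_1},2)L(\chi_{d_2},2)$ with $d_1,d_2$ fundamental and $d_1d_2=D$, the pair found by the elementary recipe of Example \ref{ex:genusch}; this is $l_{d_1,d_2}$ up to a rational factor via \eqref{eq:resc2}. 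Absorbing the prefactors $a$, $c$, $\sqrt{D}$, $\omega(D)$ and the integers $a_d$ into $\alpha,\beta\in\mb{Q}$ gives $\Re(\mtd(c(\tau)))=\alpha\Lt(g_d,3)+\beta\, l_{d_1,d_2}$. Finally, ``all known rational singular moduli'' is made precise by the search of Section \ref{S:SM}: for each family one enumerates the finitely many $D$ with $Cl(D)\cong\mb{Z}_2^g$, $g\le 2$, obeying \eqref{eq:sm}, tests rationality of $c(\tau_Q)$, and records the result, the $\eta$-quotient and class-group identities along the way being checked by Sturm's theorem. I expect the main obstacle to be precisely the bookkeeping in the middle steps: arranging every relevant $Q$ into a shape covered by Propositions \ref{P:dHd} and \ref{P:deH} simultaneously for all $d$ with $a_d\neq 0$, and handling the genus characters that do not arise from a coprime pair of fundamental discriminants, which must be traced by hand family by family.
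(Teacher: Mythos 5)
Your proposal follows essentially the same route as the paper: Theorem \ref{T:mmmLG} combined with Proposition \ref{P:dHd}/Corollary \ref{C:dHd} (or Proposition \ref{P:deH}) to pass to $L(\Theta_{Q_{/d}},3)$ and $\zeta_{Q_{/d}}(2)$, then Proposition \ref{P:CM3theta}, Theorem \ref{T:genusch} and Proposition \ref{P:tonew} to identify the newform and the Dirichlet factorization, followed by the rescalings \eqref{eq:resc1}--\eqref{eq:resc2} and the finite search of Section \ref{S:SM}. The only cosmetic difference is that the paper verifies rationality of each $c(\tau_Q)$ numerically (using the class-number bound merely to cut down the search) and handles the six $\circledS$-marked exceptions by ad hoc variants of the same computation, which matches the case-by-case bookkeeping you anticipate.
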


\begin{proof} The proofs for all entries in the lists are similar. We start with the Eisenstein-Kronecker series in Theorem \ref{T:mmmLG}, then use Proposition \ref{P:dHd} (Remark \ref{r:dHd}) or Proposition \ref{P:deH} to relate $L$-functions. 
In this way we cover all but $6$ cases ($2$ rational $c(\tau)$ and $4$ quadratic $c(\tau)$ marked by $\circledS$ in the remark column). For the $6$ exceptional cases, almost the same calculation goes through. 
	
As an illustration, we prove a moderately complicated (but typical) case:
\begin{align*} \Re(\mtd(-7,V_{28})) &= 14\Lt(g_{84d},3) + 14l_{12,-7}.
\end{align*}
which is the second row in the table for $V_{28}$.
By the second last row of Table \ref{tab:forms} and Theorem \ref{T:mmmLG}, we need to compute 
$$\frac{\Img \tau}{(2\pi)^3} \left(- 1^2 H_1(\tau) -  2^2 H_2(\tau) + 7^2H_7(\tau) + 14^2 H_{14}(\tau) \right),$$
where $\tau$ is the root of $14x^2+14x+5=0$ in $\mb{H}$. 
We have explained in Example \ref{ex:QDN} why $c(\tau)\in \mb{Z}$, then numerical calculation finds $c(\tau)=-7$.

Since the forms $Q=[14,14,5]$, $Q_{/2}=[7,14,10]$, $Q_{/7}=[2,14,35]$, and $Q_{/14}=[1,14,70]$ constitute a set of representatives in $Cl(-84)$,
by Corollary \ref{C:dHd} the above is equal to
$$\frac{14\sqrt{84}}{(2\pi)^3} \big(2L(f_{\phi,3},3) + \frac{1}{2}L(f_{\phi,1},2)\big).$$
where $\phi$ sends $(Q,Q_{/2},Q_{/7},Q_{/14})$ to $(-1,-1,1,1)$.
We have seen from Example \ref{ex:genusch} that $\phi=\chi_{12,-7}$ so $L(f_{\phi,1},2)=2L(\chi_{12},2)L(\chi_{-7},2)$ by Theorem \ref{T:genusch}.
From Example \ref{ex:g84} we have seen that $f_{\phi,3} = g_{84d}$.
After the rescaling \eqref{eq:resc1} and \eqref{eq:resc2}, we get the desired result.

\end{proof}

\begin{remark} As we shall see below that in most cases we have that $\alpha=\beta$ and $D=d_1d_2=d$.
If this is not the case, then either the quadratic form $Q$ is not primitive or the relevant $g_D$ is not a newform so that a relation in Proposition \ref{P:tonew} is invoked. 	
We will indicate this in the remark columns.
\end{remark}

Before displaying the list, we make a few other comments.
\begin{enumerate}
\item For the families $V_2$ and $B_1$, it seems hard to conjecture a relation at any of the $13$ rational singular moduli. So far the only conjectured relation is $m(1728, V_2) = \frac{1}{2} L'(g_{144a},0)$ (\cite[Conjecture 4.11]{Sa0}). This conjecture remains a big challenge for us.
\item The lists below settle all the conjectures for $V_4,V_6$, and $V_8$ in \cite{Sa0} at rational singular moduli.
Quadratic singular moduli can be treated similarly. As an illustration, we give such results at all {\em regular singular points} of Picard-Fuchs equations. 
We label the regular singular points with a `$*$' in the column $c(\tau)$.
\item The results for $\Re(\mtd(\tau))$ between every pair of double lines are not guaranteed to be equal to $m\left(f_{c(\tau)}\right)$ by Lemma \ref{L:mtd} (cf., Remark \ref{r:regionK}).
\end{enumerate}

\renewcommand{\arraystretch}{1.11} 

\begin{minipage}{15cm}
\paragraph{$$\bf V_4$$} 
$$\begin{array}{|c|c|c|c|} \hline
	\tau & c(\tau) & \Re(\mtd(\tau)) & \text{remark}\\ \hline
		[2,2,19]& -2^{10}3^47^4 & 160\Lt(g_{148a},3) + 160l_{37,-4}  & \\ \hline 
		[2,2,13]& -2^{14}3^45  & 160\Lt(g_{100a},3) + 160l_{5,-20} &\\ \hline 
		[2,2,7] & -2^{10}3^4  & 160\Lt(g_{52a},3)+160l_{13,-4} & \text{\cite[Conj]{Sa0}}\\ \hline 
		[2,2,5] & -12288  & 160\Lt(g_{36a},3)+160l_{12,-3} & \text{\cite[Conj]{Sa0}}\\ \hline 
		[1,1,2]	&  -3969 & 400\Lt(g_{7},3)+ 80 l_{4,-7}  &  \text{\cite[Conj]{Sa0}}, \ref{P:tonew} \\ \hline
		[2,2,3] & -1024  &  160\Lt(g_{20a},3)+160 l_{5,-4} & \text{\cite[Conj]{Sa0}}\\ \hline
		[1,1,1]	&  -144   &  160\Lt(g_{12},3)+80 l_{4,-3} & \text{\cite[Conj]{Sa0}}\\	\hline \hline
		[2,2,1]	&  0   &  0 &\\	\hline
		[2,1,1] & 81   & 280\Lt(g_{7},3) &  \text{\cite[Conj]{Sa0}}\\ \hline \hline
		[2,0,1] & 256*  & 320\Lt(g_8,3) &  \text{\cite[Conj]{Sa0}}\\ \hline 
		[1,0,1]&  648 & 160\Lt(g_{16},3)+80l_{4,-4} &  \text{\cite[Conj]{Sa0}}\\ \hline
		[2,0,3]& 2304  & 160\Lt(g_{24a},3)+160l_{8,-3} & \text{\cite{Sa0}} \\ \hline 
		[2,0,5]& 20736  & 160\Lt(g_{40a},3)+160l_{5,-8} & \text{\cite{Sa0}}\\ \hline 
		[2,0,9]& 2^87^4  & \frac{1600}{3}\Lt(g_{8},3)+160l_{24,-3} & \text{\cite{Sa0}, \ref{P:tonew}}\\ \hline 
		[2,0,11]& 2^83^411^2  & 160\Lt(g_{88a},3)+160l_{8,-11} & \\ \hline 
		[2,0,29]& 2^83^811^4  & 160\Lt(g_{232a},3)+160l_{29,-8} &\\ \hline
\end{array}$$
\end{minipage}

\vspace{1cm}
\begin{minipage}{15cm}
\paragraph{$$\bf V_6$$}
$$\begin{array}{|c|c|c|c|} \hline
\tau & c(\tau) & \Re(\mtd(\tau)) & \text{remark}\\ \hline
[3,3,23]& -2^63^35^6   & 90\Lt(g_{267a},3) +90l_{89,-3} &\\ \hline 
[3,3,13]& -2^63^67 &  90\Lt(g_{147a},3) +90l_{21,-7} & \\ \hline 
[3,3,11]& -2^{12}3^3  & 90\Lt(g_{123a},3) +90l_{41,-3} &\\ \hline
[3,3,7] & -8640  & 90\Lt(g_{75a},3) +90l_{5,-15} &\\ \hline 
[3,3,5] & -1728  & 90\Lt(g_{51a},3) +90l_{17,-3} &\\ \hline 
[1,1,1]	&  -192 & 90\Lt(g_{27},3) +60l_{9,-3} &\\ \hline
[3,3,2] & -27  & 90\Lt(g_{15b},3) +90l_{5,-3} &\\ \hline \hline 
[3,3,1]	&  0 & 0 &\\	\hline
[3,2,1]& 8  & 120\Lt(g_{8},3) & \text{\cite[Conj]{Sa0}}\\ \hline 
[3,1,1]&  64 & 165\Lt(g_{11},3) &\\ \hline \hline
[3,0,1]& 108*  & 180\Lt(g_{12},3) & \text{\cite[Conj]{Sa0}} \\ \hline 
[3,0,2]& 216  &  90\Lt(g_{24b},3) +90l_{8,-3} & \text{\cite{Sa0}} \\ \hline 
[3,0,4]& 1458  & \frac{405}{2}\Lt(g_{12},3) +180l_{12,-4} & \text{\cite{Sa0}, \ref{P:tonew}} \\ \hline
[3,0,5]& 3375  & 180\Lt(g_{15b},3) + 117l_{20,-3} & \text{\cite[Conj]{Sa0}, \ref{P:tonew}}\\ \hline
\end{array}$$
\end{minipage}

\vspace{1cm}
\begin{minipage}{15cm}
\paragraph{$$\bf V_8$$}
$$\begin{array}{|c|c|c|c|} \hline
\tau & c(\tau) & \Re(\mtd(\tau)) & \text{remark}\\ \hline
[4,4,5] & -512   &  64\Lt(g_{64},3) + 64l_{8,-8}& \text{\cite[Conj]{Sa0}}\\ \hline 
[4,4,3] & -64  & 64\Lt(g_{32},3) + 64l_{8,-4} & \text{\cite[Conj]{Sa0}}\\ \hline
[2,2,1]	&  -8 &  64\Lt(g_{16},3) + 32l_{4,-4} &\text{\cite[Conj]{Sa0}}\\	\hline 
[4,3,1] & 1  &  56 \Lt(g_{7},3) &\text{\cite[Conj]{Sa0}}\\ \hline 
[4,2,1]&  16 &  96 \Lt(g_{12},3) &\text{\cite[Conj]{Sa0}}\\ \hline 
[4,0,1]& 64*  & 128 \Lt(g_{16},3) & \text{\cite{Sa0}}\\ \hline 
[4,0,3]& 256  & 64 \Lt(g_{48},3) + 64l_{12,-4} & \text{\cite{Sa0}}\\ \hline
[4,0,7]& 4096  & 64 \Lt(g_{112},3) + 64l_{28,-4} & \text{\cite[Conj]{Sa0}}\\ \hline
\end{array}$$
\end{minipage}

\vspace{1cm}
\begin{minipage}{15cm}
\paragraph{$$\bf V_{10}$$}
$$\begin{array}{|c|c|c|c|} \hline
\tau & c(\tau) & \Re(\mtd(\tau)) & \text{remark}\\ \hline
[5,5,13] & -15228  &  50\Lt(g_{235a},3)+ 50 l_{5,-47} &\\ \hline 
[5,5,7] & -828   &  50\Lt(g_{115a},3) + 50 l_{5,-23} &\\ \hline 
[5,5,3] & -28  & 50\Lt(g_{35b},3) + 50 l_{5,-7}&\\ \hline
[5,5,2]	&  -3 &  50\Lt(g_{15a},3) + 50l_{5,-3}&\\	\hline \hline
[10,10,3]& 22-10\sqrt{5} * & 100(\Lt(g_{20a},3)-\Lt(g_{20b},3)) & \\ \hline 
[5,4,1]&  0 &  0 &\\ \hline
[5,3,1]& 4  & 55\Lt(g_{11},3)&\\ \hline
[5,2,1]& 18  & 80\Lt(g_{16},3)&\\ \hline
[5,1,1]& 36  & 95\Lt(g_{19},3)&\\ \hline \hline
[5,0,1]& 22+10\sqrt{5} * & 50(\Lt(g_{20a},3)+\Lt(g_{20b},3)) &\\ \hline 
[5,0,2]& 72  &  50\Lt(g_{40b},3) + 50 l_{5,-8} &\\ \hline 
[5,0,3]& 147  &  125\Lt(g_{15a},3) + 65 l_{20,-3} & \ref{P:tonew} \\ \hline 
\end{array}$$
\end{minipage}

\vspace{1cm}
\begin{minipage}{15cm}
\paragraph{$$\bf V_{12}$$}  
$$\begin{array}{|c|c|c|c|} \hline
\tau & c(\tau) & \Re(\mtd(\tau)) & \text{remark}\\ \hline
[3,3,1] &  -1 & 36\Lt(g_{12},3) + 24l_{4,-3} &\\	\hline  \hline
[30,24,5] &  17-12\sqrt{2}* & 48 \Lt(g_{24a},3)- 36 \Lt(g_{24b},3)& \\	\hline
[6,4,1] &  1 & 32\Lt(g_{8},3) &\\	\hline \hline
[6,0,1] &  17+12\sqrt{2}* & 48 \Lt(g_{24a},3)+ 36 \Lt(g_{24b},3) &\\	\hline 
\end{array}$$
\end{minipage}

\vspace{1cm}
\begin{minipage}{15cm}
\paragraph{$$\bf V_{14}$$}
$$\begin{array}{|c|c|c|c|} \hline
\tau & c(\tau) & \Re(\mtd(\tau)) & \text{remark}\\ \hline
[7,7,17] & -10648  &  35 \Lt(g_{427a},3)+ 35 l_{61,-7}&\\ \hline 
[7,7,5] & -64   &  35\Lt(g_{91b},3) + 35 l_{13,-7} &\\ \hline 
[7,7,3] & -8  & 35\Lt(g_{35a},3) + 35 l_{5,-7} &\\ \hline \hline 
[7,7,2]	&  -1* & 70\Lt(g_{7},3) &\\	\hline 
[7,5,1]&  0 &  0 &\\ \hline
[7,4,1]& 2  & 30\Lt(g_{12},3) &\\ \hline
[7,3,1]& 8  & \frac{95}{2}\Lt(g_{19},3)&\\ \hline
[7,1,1]& 24  & \frac{135}{2}\Lt(g_{27},3)&\\ \hline \hline
[7,0,1]& 27*  & 210 \Lt(g_{7},3) & \ref{P:tonew} \\ \hline 
[7,0,4]& 125  & \frac{455}{2}\Lt(g_{7},3) +35 l_{28,-4} & \ref{P:tonew}\\ \hline 
\end{array}$$
\end{minipage}

\vspace{1cm}
\begin{minipage}{15cm}
\paragraph{$$\bf V_{16}$$}
$$\begin{array}{|c|c|c|c|} \hline
\tau & c(\tau) & \Re(\mtd(\tau)) & \text{remark}\\ \hline
[8,8,3]	& -4  &  80\Lt(g_{8},3) + 32l_{8,-4} & \ref{P:tonew}\\	\hline \hline
[24,16,3]	&  12-8\sqrt{2}* & 32\Lt(g_{32},3) - 80\Lt(g_{8},3) & \ref{P:tonew}\\	\hline 
[8,5,1]	& 1  & 21\Lt(g_7,3) &\\	\hline 
[8,4,1]	& 4  & 32\Lt(g_{16},3) &\\	\hline 
[8,2,1]	& 16  &  161\Lt(g_7,3) & \ref{P:tonew}\\	\hline  \hline
[8,0,1]	&  12+8\sqrt{2}* & 32\Lt(g_{32},3) + 80\Lt(g_{8},3) & \ref{P:tonew}\\	\hline 
\end{array}$$
\end{minipage}

\vspace{1cm}
\begin{minipage}{15cm}
\paragraph{$$\bf V_{18}$$} 
$$\begin{array}{|c|c|c|c|} \hline
\tau & c(\tau) & \Re(\mtd(\tau)) & \text{remark}\\ \hline
[9,9,5]	& -27  &  27\Lt(g_{99},3) + 27 l_{33,-3} &\\ \hline
[3,3,1]	& -3  & 27\Lt(g_{27},3) + 18 l_{9,-3} &\\	\hline \hline
[18,18,5] &  9-6\sqrt{3}* & 54 (L(g_{36a},3)-L(g_{36b},3)) &\\	\hline 
[9,8,2]	&  -1 & 48 \Lt(g_8,3) &\\	\hline 
[9,5,1]	&  1 &  \frac{33}{2} \Lt(g_{11},3) &\\	\hline 
[9,3,1]	&  9 & \frac{81}{2} \Lt(g_{27},3) &\\	\hline \hline
[9,0,1]	&  9+6\sqrt{3}*  &  27 (L(g_{36a},3)+L(g_{36b},3)) &\\	\hline 
[9,0,2]	&  27 & 27\Lt(g_{72},3) + 27 l_{24,-3} &\\	\hline 
\end{array}$$
\end{minipage}

\vspace{1cm}
\begin{minipage}{15cm}
\paragraph{$$\bf V_{22}$$}
$$\begin{array}{|c|c|c|c|} \hline
\tau & c(\tau) & \Re(\mtd(\tau)) & \text{remark}\\ \hline
[11,11,7]	& -44  &  22\Lt(g_{187b},3) + 22 l_{17,-11} &\\	\hline
[11,11,5]	& -12  &  \frac{242}{3}\Lt(g_{11},3) + 22 l_{33,-3} & \ref{P:tonew}\\	\hline \hline
[11,11,3]	& 0  &  44\Lt(g_{11},3) &\\	\hline 
[11,9,2]	& 1  & 28\Lt(g_7,3) & \circledS\\	\hline 
[11,6,1]	& 2  & 8\Lt(g_{8},3) &\\	\hline 
[11,5,1]	& 4  & 19\Lt(g_{19},3) &\\	\hline 
[11,4,1]	& 7  & 84\Lt(g_{7},3) & \ref{P:tonew}\\	\hline 
[11,1,1]	& 16  & 43\Lt(g_{43},3) & \\	\hline 
[33,22,4] & \br{\alpha}_2 * &  \frac{22}{3}(-\frac{45}{2}\Lt(g_{11},3)+(5-\sqrt{33})\Lt(h_{44a},3)+(5+\sqrt{33})\Lt(h_{44a}',3)) & \ref{P:tonew},\circledS\\	\hline  
[44,22,3] & \alpha_2 * & 88(\Lt(h_{44a},3)+\Lt(h_{44a}',3)) \footnote{where $\alpha_1, \alpha_2, \br{\alpha}_2$ are the roots of $x^3 - 20x^2 + 56x - 44$.
	The coefficient fields of $h_{44a}$ and its Galois conjugate $h_{44a}'$ are $\mb{Q}(\sqrt{33})$, and their full LMFDB labels are $44.3.d.a$.} & \circledS\\	\hline \hline 
[11,0,1]	& \alpha_1*  &  88\Lt(g_{11},3) & \ref{P:tonew}\\	\hline 
[11,0,2]	& 22  &  22\Lt(g_{88b},3) + 22l_{8,-11} &\\	\hline 
\end{array}$$
\end{minipage}

\vspace{1cm}
\begin{minipage}{15cm}
\paragraph{$$\bf V_{12a}$$}  
$$\begin{array}{|c|c|c|c|} \hline
\tau & c(\tau) & \Re(\mtd(\tau)) & \text{remark}\\ \hline
[6,6,31]& -1123600  & 36 \Lt(g_{708c},3) +36l_{177,-4}&\\	\hline 
[6,6,17]& -24304  &  36 \Lt(g_{372c},3) +36l_{12,-31}&\\ \hline 
[6,6,11]& -2704  &  36 \Lt(g_{228c},3) +36l_{57,-4} &\\	\hline 
[6,6,7]	& -400  &  36 \Lt(g_{132c},3) +36l_{33,-4} &\\	\hline 
[6,6,5]	& -112  & 36 \Lt(g_{84c},3) +36l_{12,-7} &\\	\hline 
[3,3,2]	& -49  &  90 \Lt(g_{15b},3) + 54l_{20,-3} & \ref{P:tonew}\\	\hline 
[2,2,1]	& -16  & 36 \Lt(g_{36b},3) + 24l_{9,-4} &\\	\hline 
[3,3,1]	& -4*  & 72\Lt(g_{12},3) &\\	\hline  
[6,4,1]	& 0  &  0 &\\	\hline 
[6,3,1]	& 5  & 45 \Lt(g_{15b},3) &\\	\hline 
[6,2,1]	& 16  & 60 \Lt(g_{20b},3) &\\	\hline 
[6,0,1]	& 32*  & 72 \Lt(g_{24b},3) & \\	\hline 
[3,0,1]	& 50  &  117\Lt(g_{12},3) + 36l_{12,-4} &\ref{P:tonew}  \\	\hline 
[2,0,1]	& 96  &  36\Lt(g_{72},3) + 24l_{9,-8} & \\	\hline 
[6,0,5]	& 320  & 36 \Lt(g_{120c},3) + 36l_{8,-15} &\\	\hline 
[6,0,7]	& 896  & 36 \Lt(g_{168c},3) + 36l_{24,-7} &\\	\hline 
[6,0,13]& 10400  &  36 \Lt(g_{312c},3) + 36l_{8,-39} &\\	\hline 
[6,0,17]& 39200  &  36 \Lt(g_{408c},3) + 36l_{17,-24} &\\	\hline 
\end{array}$$
\end{minipage}

\begin{minipage}{15cm}
\paragraph{$$\bf V_{12b}$$} 
$$\begin{array}{|c|c|c|c|} \hline
\tau & c(\tau) & \Re(\mtd(\tau)) & \text{remark}\\ \hline
[6,6,31]&  -1123596 &  48 L(g_{708b},3) + 48l_{12,-59} &\\ \hline
[6,6,17]&  -24300 &  48 L(g_{372b},3) + 48l_{124,-3} &\\	\hline 
[6,6,11]&  -2700 & 48 L(g_{228b},3) + 48l_{76,-3} &\\	\hline 
[6,6,7]	& -396  & 48 L(g_{132b},3) + 48l_{12,-11} &\\	\hline
[6,6,5]	& -108  &  48 L(g_{84b},3) + 48l_{28,-3}&\\	\hline 
[3,3,2]	& -45  &  96 L(g_{15a},3) + 24l_{4,-15} &\ref{P:tonew} \\	\hline 
[2,2,1]	& -12  & 48 L(g_{36},3) + 48l_{12,-3} &\\	\hline 
[3,3,1]	& 0  & 96l_{4,-3} &\\	\hline  
[6,4,1]	&  4* & 64 \Lt(g_{8},3) &\\	\hline 
[6,3,1]	&  9 & 60 \Lt(g_{15a},3) &\\	\hline 
[6,2,1]	&  20 &  80 \Lt(g_{20a},3)&\\ \hline 
[6,0,1]	&  36* &  96 \Lt(g_{24a},3) &\\	\hline  
[3,0,1]	&  54 &  48\Lt(g_{48a},3) + 33 l_{16,-3}&\\	\hline 
[2,0,1]	&  100 & 192\Lt(g_{8},3) + 48 l_{24,-3} &\ref{P:tonew}\\	\hline 
[6,0,5]	&  324 & 48\Lt(g_{120b},3) + 48 l_{40,-3} &\\	\hline 
[6,0,7]	&  900 & 48\Lt(g_{168b},3) + 48 l_{21,-8} &\\	\hline 
[6,0,13]&  10404 & 48\Lt(g_{312b},3) + 48 l_{13,-24} &\\	\hline 
[6,0,17]&  39204 & 48\Lt(g_{408b},3) + 48 l_{136,-3} &\\	\hline 
\end{array}$$
\end{minipage}

\vspace{1cm}
\begin{minipage}{15cm}
\paragraph{$$\bf V_{20}$$}  
$$\begin{array}{|c|c|c|c|} \hline
\tau & c(\tau) & \Re(\mtd(\tau)) & \text{remark}\\ \hline
[10,10,11] & -324  & 20\Lt(g_{340c},3) + 20l_{17,-20} &\\	\hline 
[10,10,7] & -64  & \frac{640}{9}\Lt(g_{20b},3) + 20l_{12,-15} & \ref{P:tonew}\\	\hline  
[2,2,1]	& -20  & 20\Lt(g_{100b},3)+16l_{25,-4}  &\\	\hline 
[5,5,2]	& -9  &  60\Lt(g_{15a},3)+30l_{20,-3}	&\ref{P:tonew}\\\hline 
[10,10,3] & -4*  & 40\Lt(g_{20b},3) &\\	\hline 
[5,4,1]	& -2  &  32\Lt(g_{16},3) &\\	\hline 
[10,6,1] & 0  & 0 &\\ \hline 
[10,5,1] & 1  & 15\Lt(g_{15a},3) &\\	\hline 
[10,4,1] & 4  & 24\Lt(g_{24b},3) &\\	\hline 
[10,2,1] &  12 & 36\Lt(g_{36b},3) &\\  \hline 
[10,0,1] & 16*  & 40\Lt(g_{40b},3) &\\	\hline 
[10,0,3] & 36  &  20\Lt(g_{120d},3) + 20l_{8,-15} &\\	\hline 
[10,0,7] & 196  & 20\Lt(g_{280c},3) + 20l_{40,-7} &\\ \hline 
[10,0,13] & 1296  & 20\Lt(g_{520c},3) + 20l_{65,-8} &\\	\hline 
[10,0,19] & 5776  & 20\Lt(g_{760c},3) + 20l_{8,-95} &\\	\hline 
\end{array}$$
\end{minipage}

\vspace{1cm}
\begin{minipage}{15cm}
\paragraph{$$\bf V_{24}$$} 
$$\begin{array}{|c|c|c|c|} \hline
\tau & c(\tau) & \Re(\mtd(\tau)) & \text{remark}\\ \hline
[12,12,7]	& -32  &  96\Lt(g_{12},3) + 24L_{48,-4} &\ref{P:tonew}\\	\hline 
[12,12,5]	& -8  & 36\Lt(g_{24b},3) + 24L_{8,-12} &\ref{P:tonew}\\	\hline 
[3,3,1]	& -2  & 42\Lt(g_{12},3) + 24L_{12,-4} &\ref{P:tonew}\\	\hline 
[12,9,2]	& 1  & 150l_{5,-3} & \text{\cite[$Q_{-3}$]{Be2}}, \circledS \\	\hline 
[12,6,1]	& 4*  &  24\Lt(g_{12},3) & \text{\cite[$Q_0$]{Be1}}\\	\hline 
[12,4,1]	& 8  &   80\Lt(g_{8},3) & \ref{P:tonew}\\	\hline  
[12,0,1]	& 16*  &  96\Lt(g_{12},3)  & \text{\cite[$Q_{12}$]{Be1}},\ref{P:tonew}\\	\hline 
[12,0,5]	& 64  &  90\Lt(g_{15b},3) + 24L_{20,-12} & \ref{P:tonew} \\	\hline 
\end{array}$$
\end{minipage}

\vspace{1cm}
\begin{minipage}{15cm}
\paragraph{$$\bf V_{28}$$} 
$$\begin{array}{|c|c|c|c|} \hline
\tau & c(\tau) & \Re(\mtd(\tau)) & \text{remark}\\ \hline
[14,14,13] & -175 & 14\Lt(g_{532c},3) + 14l_{76,-7} &\\	\hline
[14,14,5] & -7 & 14\Lt(g_{84d},3) + 14l_{12,-7} &\\	\hline \hline
[7,7,2] & -4*  & 98\Lt(g_{7},3) &\ref{P:tonew} \\	\hline 
[14,12,3] & -3  & 24\Lt(g_{24b},3) &\\	\hline 
[7,5,1]	& -1 & 12\Lt(g_{12},3) &\\	\hline 
[42,28,5] & 5-4\sqrt{2}* & 14\sqrt{2}(\Lt(h_{56c},3)-\Lt(h_{56c}',3)) & \circledS\\	\hline 
[14,7,1] & 0  & 7\Lt(g_{7},3) &\\	\hline 
[14,6,1] & 1  &  10\Lt(g_{20b},3) &\\ \hline 
[14,4,1] & 5  &  20\Lt(g_{40b},3) &\\	\hline 
[14,2,1] & 9  & 26\Lt(g_{52b},3) &\\	\hline \hline 
[14,0,1] & 5+4\sqrt{2}*  & 14(\Lt(h_{56c},3)+\Lt(h_{56c}',3)) 
\footnote{The coefficient fields of $h_{56c}$ and its Galois conjugate $h_{56c}'$ are $\mb{Q}(\sqrt{2})$, and their full LMFDB labels are $56.3.h.c$.}&\\	\hline 
[7,0,1]	& 14  & 133\Lt(g_{7},3) + 14l_{28,-4} &\ref{P:tonew}\\	\hline 
[14,0,3] & 21  & 14\Lt(g_{168d},3) + 14l_{24,-7} &\\	\hline 
[14,0,5] & 45  & 14\Lt(g_{280d},3) + 14l_{40,-7} &\\	\hline  
\end{array}$$
\end{minipage}


\vspace{1cm}
\begin{minipage}{15cm}
\paragraph{$$\bf V_{30}$$} 
$$\begin{array}{|c|c|c|c|} \hline
\tau & c(\tau) & \Re(\mtd(\tau)) & \text{remark}\\ \hline
[15,15,17]	& -363  &  15\Lt(g_{795c},3) + 15l_{265,-3} &\\	\hline 
[15,15,13]	& -135  &  15\Lt(g_{555c},3) + 15l_{37,-15}&\\	\hline 
[15,15,11]	& -75  &  15\Lt(g_{435c},3) + 15l_{145,-3} &\\	\hline 
[15,15,7]	& -15  &  15\Lt(g_{195c},3) + 15l_{13,-15} &\\	\hline 
[3,3,1]	   & -3  &  15\Lt(g_{75b},3) + 12l_{25,-3} &\\	\hline \hline 
[15,15,4] & 0 & 30\Lt(g_{15a},3)&\\	\hline 
[15,7,1]	& 1  &  -\frac{11}{2}\Lt(g_{11},3) &\\	\hline
[15,20,7]	& 1-2i*  & 25\big(\Lt(g_{20a},3) + \frac{1}{2}\Lt(g_{20b},3)\big) & \circledS\\	\hline 
[15,10,2]   & 1+2i*  &  25\big(\Lt(g_{20a},3) + \frac{1}{2}\Lt(g_{20b},3)\big) &\\	\hline 
[15,6,1]	& 3  &  12\Lt(g_{24a},3)&\\	\hline 
[15,5,1]	& 5  &  \frac{35}{2}\Lt(g_{35b},3) &\\	\hline  
[15,3,1]	& 9  &  \frac{51}{2}\Lt(g_{51b},3) &\\	\hline \hline 
[15,0,1]	& 12*  &  75\Lt(g_{15a},3) &\ref{P:tonew}\\	\hline 
[15,0,2]	& 15  &  15\Lt(g_{120d},3) + 15l_{40,-3}&\\	\hline 
[15,0,4]	& 30  &  \frac{645}{8}\Lt(g_{15a},3) + 15l_{12,-20} & \ref{P:tonew} \\	\hline  
\end{array}$$
\end{minipage}
\vspace{1cm}

We are in a good position to discuss the modularity problem for $K3$ surfaces.
For an algebraic $K3$ surface $Y$ over $\mb{Q}$, we denote by $Y_p$ the reduction of $Y$ modulo $p$.
The zeta function of $Y_p$ at ``good" primes is of the form
$$Z_{Y_p}(t) = \frac{1}{(1-T)(1-p^2T)P_p(T)},$$
where $P_p(T)$ is a polynomial of degree $22$ and $P_p(0)=1$.
Furthermore, $P_p(T)$ factors as $P_p(T) = Q_p(T)R_p(T)$,
where $Q_p(T)$ and $R_p(T)$ come from the transcendental and algebraic cycles respectively.
Let $\op{T}(Y) = \op{Pic}(Y)^\perp \subset H^2(Y,\mb{Z})$ be the {\em transcendental lattice} of $Y$.
We define $$L(\op{T}(Y),s):= (*) \prod_{p \text{ good}} \frac{1}{Q_p(p^{-s})},$$
($*$) is the product of the Euler factors corresponding to the primes of bad reduction.

If $Y$ is singular, then $\T(Y)$ is a 2-dimensional lattice, which can be expressed through its intersection form. We define the {\em discriminant} of $Y$ as the discriminant of the intersection form.

\begin{theorem}[Livn\'{e}\cite{Li}] Let $Y$ be a singular $K3$ surface defined over $\mb{Q}$ with discriminant $d$. Then there exists a newform $g$ of weight $3$ with CM by
	$\mb{Q}(\sqrt{d})$ such that $L(\op{T}(Y),s)=L(g,s)$.
\end{theorem}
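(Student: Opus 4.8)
This is Livn\'{e}'s theorem \cite{Li}; here is the line of attack one would follow. The plan is to attach to the transcendental lattice a two-dimensional $\ell$-adic Galois representation, use the orthogonal pairing coming from cup product to recognize it as induced from a Hecke character of $\mb{Q}(\sqrt{d})$, and then match its $L$-function with that of the associated weight-$3$ CM theta series $f_\psi$ of Section \ref{S:thetaHecke}.

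First I would build the Galois representation. Since $Y$ is singular, $\op{T}(Y)$ has rank $2$. The span of the algebraic classes is a $G_{\mb{Q}}$-stable subspace of $H^2(Y_{\br{\mb{Q}}},\mb{Q}_\ell)$, so its orthogonal complement $V_\ell := \op{T}(Y)\otimes_{\mb{Z}}\mb{Q}_\ell$ is a $G_{\mb{Q}}$-subrepresentation; let $\rho\colon G_{\mb{Q}}\to\GL(V_\ell)\cong\GL_2(\mb{Q}_\ell)$ be the resulting representation. It is unramified outside $\ell$ and the bad primes of a proper model of $Y$, it is de Rham (indeed potentially crystalline) at $\ell$ with Hodge--Tate weights $\{0,2\}$ (the weight-$2$ transcendental Hodge structure of a singular $K3$ has $h^{2,0}=h^{0,2}=1$), and by construction $L(\op{T}(Y),s)=L(\rho,s)$. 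Cup product restricts to a nondegenerate symmetric $G_{\mb{Q}}$-equivariant pairing $V_\ell\otimes V_\ell\to\mb{Q}_\ell(-2)$, so up to the twist by $\chi_{\op{cyc}}$ the image of $\rho$ lies in the orthogonal similitude group $\op{GO}(q)$ of a binary quadratic space $q$ whose discriminant is the class of $d=\op{disc}\op{T}(Y)$.

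Next I would read off the dihedral structure and the CM. Because $Y$ is singular, the transcendental Hodge structure carries a nontrivial complex multiplication by an imaginary quadratic field, which by Shioda--Inose is $K=\mb{Q}(\sqrt{d})$; via the comparison isomorphism this identifies $K$ with the discriminant field of $q$ and also forces $\rho$ to be irreducible (the irreducibility of the Hodge structure passes to $V_\ell$ through the Tate conjecture for $K3$ surfaces, known in characteristic $0$). Now $\op{GO}_2$ of a binary space of discriminant $d$ is the semidirect product of its connected one-dimensional torus $\op{GSO}_2$ with $\mb{Z}/2$, the quotient $\op{GO}_2\to\mb{Z}/2$ cutting out $K/\mb{Q}$. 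Since $\rho$ is irreducible it cannot land in $\op{GSO}_2$, so $\rho^{-1}(\op{GSO}_2)=G_K$; on $G_K$ the representation takes values in the abelian group $\op{GSO}_2$ and splits as $\chi\oplus\chi^{\sigma}$ with $\chi\ne\chi^{\sigma}$ ($\sigma$ the nontrivial automorphism of $K$), whence $\rho\cong\op{Ind}_{G_K}^{G_{\mb{Q}}}\chi$.

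Finally I would convert $\chi$ into a Hecke character and conclude. The character $\chi\colon G_K\to\br{\mb{Q}}_\ell^{\times}$ is unramified outside a finite set and, inheriting the de Rham property of $\rho$, is de Rham at the places above $\ell$ with Hodge--Tate weights $0$ and $2$ for the two embeddings $K\hookrightarrow\br{\mb{Q}}_\ell$. By the theory of locally algebraic abelian $\ell$-adic characters (Serre, Fontaine; equivalently Weil's algebraic Hecke characters) $\chi$ arises from a Hecke character $\psi\in X_K^{2}(\m)$ with $\m$ its conductor, and $L(\chi,s)=L(\psi,s)$. By the Hecke--Shimura theorem recalled in Section \ref{S:thetaHecke}, $g:=f_\psi$ is then a newform of weight $2+1=3$ and level $\Nm(\m)|d_K|$ with $L(g,s)=L(\psi,s)$; and $\rho=\op{Ind}_{G_K}^{G_{\mb{Q}}}\chi$ exactly says $g\otimes\chi_K=g$, i.e.\ $g$ has complex multiplication by $K=\mb{Q}(\sqrt{d})$. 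Assembling the chain $L(\op{T}(Y),s)=L(\rho,s)=L(\op{Ind}_{G_K}^{G_{\mb{Q}}}\chi,s)=L(\psi,s)=L(g,s)$ gives the claim. The hard part is precisely the passage from $\chi$ to a genuine Hecke character: one must know $\chi$ is locally algebraic, which rests on the $p$-adic Hodge theory of $\rho$ (good reduction forcing potential crystallinity) together with Livn\'{e}'s local--global rigidity for two-dimensional orthogonal motives, the real novelty of \cite{Li}; the ancillary points --- ruling out that $\rho$ factors through $\op{GSO}_2$, and pinning down the discriminant field $K=\mb{Q}(\sqrt{d})$ --- are comparatively soft, being controlled by the genuine weight-$2$ CM Hodge structure on $\op{T}(Y)$.
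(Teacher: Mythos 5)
The paper does not prove this statement: it is imported verbatim from Livn\'{e} \cite{Li} and used as a black box, so there is no internal proof to compare against. Your sketch is a faithful outline of Livn\'{e}'s actual argument --- the orthogonal two-dimensional $\ell$-adic representation on $\op{T}(Y)\otimes\mb{Q}_\ell$, the dichotomy forced by $\op{GO}_2=\op{GSO}_2\rtimes\mb{Z}/2$ together with irreducibility, induction from a locally algebraic character of $G_K$, and the Hecke--Shimura dictionary giving the weight-$3$ CM newform --- and you correctly identify the two genuinely hard points (local algebraicity of $\chi$ and pinning the discriminant field down to $\mb{Q}(\sqrt{d})$) as the substance of \cite{Li} rather than claiming to have dispatched them.
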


\begin{conjecture} Each above $K3$ hypersurface (after desingularization if necessary) is modular with the weight-3 newform given in the column $\Re(\wtd{m}(\tau))$.
\end{conjecture}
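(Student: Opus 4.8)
The plan is to deduce the conjecture from Livn\'e's modularity theorem together with an explicit description of the special fibres as singular $K3$ surfaces over $\mb{Q}$ (or over a quadratic field), pinning down the newform by a finite, Sturm-bounded point count.

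First, for each row of the lists in Section \ref{ss:list} I would produce a smooth model $Y$ of the corresponding special fibre. The Laurent polynomial $f$ has reflexive (or nearly reflexive) Newton polytope $\Delta$, so the closure $\br{X}_{c}$ of $\{f=c\}$ inside a suitable toric compactification $X_\Delta$ is an anticanonical $K3$; a toric crepant resolution $X_\Delta'\to X_\Delta$, defined over $\mb{Q}$, induces a resolution $Y\to \br{X}_{c}$, and when $c=c(\tau_Q)\in\mb{Q}$ the surface $Y$ is defined over $\mb{Q}$. Since $c(\tau_Q)$ is a singular modulus of the mirror map, $\tau_Q$ is a CM point of discriminant $D$ and the Picard rank of $Y$ jumps from the generic value ($19$, or $<19$ for the eight families of Table \ref{tab:Lp2}) to $20$, so $Y$ is a singular $K3$. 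Its transcendental lattice $\op{T}(Y)$ has rank two; from the generic transcendental lattice of the family (available via Doran's classification \cite{Dor} and the data of Table \ref{tab:forms}) together with the CM point $\tau_Q$ one reads off $\det\op{T}(Y)$, which must equal $D$ up to a square, consistently with $-D/d$ being a square in Theorem \ref{T:mmsv}.

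Second, Livn\'e's theorem then gives a unique weight-$3$ newform $g$ with CM by $\mb{Q}(\sqrt{D})$ and $L(\op{T}(Y),s)=L(g,s)$, and it remains to identify $g$ with the form $g_d$ in the list. I would (i) bound the conductor of $L(\op{T}(Y),s)$ in terms of the primes of bad reduction of $Y$, which — because $Y$ is a crepant resolution of a toric hypersurface — are controlled by the primes dividing $c(\tau_Q)$, $c(\tau_Q)-f(1,1,1)$ and the combinatorics of $\Delta$; this confines $g$ to the finitely many CM newforms of bounded level, all of which appear among the $\Theta_{Q_i}$-combinations of Corollary \ref{C:rf3} and Proposition \ref{P:CM3theta}. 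Then (ii) compute $\#Y(\mb{F}_p)$ and $\#Y(\mb{F}_{p^2})$ for enough good primes $p$ to extract the transcendental factor $Q_p(T)$ of the zeta function, hence $a_p(g)$, and match against the candidates up to the Sturm bound. Tracking the exceptional divisors of $Y\to\br{X}_c$ so as to split off the N\'eron--Severi factor $R_p(T)$ cleanly is exactly where ``after desingularization if necessary'' enters.

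The main obstacle is twofold. First, Livn\'e's theorem is over $\mb{Q}$, whereas several entries — those marked with $*$ or $\circledS$, notably the $V_{22}$ and $V_{28}$ rows with the Galois-conjugate pairs $h_{44a},h_{44a}'$ and $h_{56c},h_{56c}'$ — have $c(\tau_Q)$ only in a quadratic field, so $Y$ and its transcendental motive live over that field; here one needs a descent or Faltings--Serre argument over the quadratic field, the coefficient fields $\mb{Q}(\sqrt{33})$ and $\mb{Q}(\sqrt 2)$ being precisely the obstruction to $\mb{Q}$-rationality. Moreover, at the regular singular points of the Picard--Fuchs equation (the other $*$-rows) the naive fibre is a degenerate fibre of the pencil rather than a smooth $K3$, so one must first check that a suitable birational model is a (singular) $K3$ at all. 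Second, at the purely technical level, making the conductor bound of step (i) effective and uniform across all $179$ entries, and carrying out the resulting point counts, is laborious though conceptually routine. A useful consistency check — not a proof, since Beilinson's conjecture for $L(\op{T}(Y),3)$ is open in general — is that Theorem \ref{T:mmmLG} together with Propositions \ref{P:dHd} and \ref{P:deH} writes $\Re(\mtd(c(\tau_Q)))$ as $\alpha\,\Lt(g_d,3)+\beta\,l_{d_1,d_2}$; interpreting the Mahler measure as a regulator integral on $Y$ and invoking the Beilinson formula for the transcendental part (known for CM $K3$s via Hecke $L$-functions of imaginary quadratic fields) would force $L(g_d,s)=L(\op{T}(Y),s)$, with the $l_{d_1,d_2}$-term accounting for a degenerate Euler factor or the algebraic part.
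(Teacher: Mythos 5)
The statement you are addressing is stated in the paper as a \emph{conjecture}: the paper offers no proof, only a remark that one could verify it case by case as in \cite{AOP} (essentially your step (ii): Livn\'e's theorem plus point counts), that no uniform conceptual proof is known, and --- crucially --- that the author himself suspects several of the special fibres (especially in the families $V_{12a},V_{12b},V_{20},V_{24},V_{28},V_{30}$, $B_{6a}$, $B_{6b}$) are \emph{not} singular $K3$ surfaces. This undercuts the first, load-bearing step of your argument. You assert that because $c(\tau_Q)$ is a singular modulus the Picard rank of $Y$ jumps to $20$. That inference is only plausible for the families of generic Picard rank $19$, where the transcendental lattice has rank $3$ and the CM condition on the period can force it down to rank $2$. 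For the eight families of generic Picard rank $<19$ the transcendental lattice generically has rank $\geq 4$, and a CM value of the mirror map constrains only the rank-$3$ piece governed by the order-$3$ Picard--Fuchs equation; there is no reason the remaining transcendental classes become algebraic at $\tau_Q$. If $\rho(Y)<20$, Livn\'e's theorem does not apply, $Q_p(T)$ acquires extra factors, and your identification $L(\op{T}(Y),s)=L(g_d,s)$ fails as stated --- at best the newform occurs as a factor, which is presumably what the conjecture intends but which your argument does not establish.

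Beyond that, your proposal is a programme rather than a proof: the existence of a crepant toric resolution defined over $\mb{Q}$, the effective conductor bound in step (i), the actual point counts in step (ii), and the descent or Faltings--Serre argument over $\mb{Q}(\sqrt{33})$ and $\mb{Q}(\sqrt{2})$ for the quadratic moduli are all left as tasks, and the last is a genuine obstacle since Livn\'e's theorem as cited is over $\mb{Q}$. Your closing ``consistency check'' via Beilinson's conjecture is, as you say yourself, not a proof. In short: your outline coincides with the case-by-case verification route the paper gestures at in its remark, but it does not close the conjecture, and its blanket singularity hypothesis is one the paper explicitly doubts.
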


\begin{remark} It is not very difficult to verify one by one as did in \cite{AOP}. But we do not know a conceptional uniform proof.
	We also suspect that some of them, especially those in $V_{12a},V_{12b},V_{20},V_{24},V_{28},V_{30}$ (and $B_{6a}, B_{6b}$), are not singular (i.e., have Picard rank less than $20$).
	In those cases, $Q_p(T)$ would have some trivial factors.
\end{remark}

\section{Exotic Relations} \label{S:rel}
If one family of hypersurface $\tilde{f}_c$ is obtained from another family $f_c$ by pulling back some degree $d$ cover of the base space,
then we have the following relation 
		$$ m(\tilde{f}_c) = \frac{1}{d} m(f_{c^d}).$$
We call such relations {\em trivial}. Other relations among Mahler measures of families are called {\em exotic}.
As a convention in this section, we shall write $m_i(c)$ for $m(f_c)$ where $f$ is the Laurent polynomial labelled by $V_i$ (or $B_i$ if $i=6a,6b$).
However, for the function $\mtd$ we will switch to the modular parameter $\tau$, namely, $\mtd_i(\tau) = \mtd_i(c_i(\tau))$.

M. Rogers proved in \cite[Theorem 2.5]{Ro} the following exotic relations between the Mahler measures of two families:
\begin{align}
m_{6b}\left(3(z+z^{-1})\right) &= \frac{1}{20} m_4\bigg(\frac{9(z^2+3)^4}{z^6}\bigg) + \frac{3}{20} m_4\bigg(\frac{9(3+z^{-2})^4}{z^{-6}}\bigg), \label{eq:relP}\\
m_{24}(z) &= \frac{8}{15} m_6\bigg(\frac{(z-4)^3}{z}\bigg) - \frac{1}{15} m_6\bigg(\frac{(16-z)^3}{z^2}\bigg). \label{eq:relQ}
\end{align}
We shall see that such relations arise naturally from the modular relations among the modular functions.
As the first step, we observe that the following relations are the direct consequence of Theorem \ref{T:mmm} and the column $e(\tau)$ of Table \ref{tab:forms}.
As a trivial remark, we mention that the relations hold unconditionally if we replace $\mtd$ by the right side of the equation \eqref{eq:local}.
\begin{corollary} \label{C:relation} We have the following relations for $\tau$ lying in the fundamental domains containing $i\infty$ of the modular functions $c(\tau)$ for both sides.
\begin{align} 
\mtd_8(\tau) &= \frac{1}{5}\mtd_4(\tau) +\frac{2}{5}\mtd_4(2\tau), \label{eq:q8} \\
\mtd_{12}(\tau) &= \frac{1}{2}\big(\mtd_{12a}(\tau)+\mtd_{12b}(\tau)\big), \label{eq:q12} \\
\mtd_{16}(\tau) &= \frac{1}{20}\mtd_4(\tau)+ \frac{3}{40}\mtd_4(2\tau)+ \frac{1}{5}\mtd_4(4\tau),  \label{eq:q16}\\
\mtd_{18}(\tau) &= \frac{1}{10} \mtd_6(\tau) + \frac{3}{10}\mtd_6(3\tau), \label{eq:q18} \\
\mtd_{12a}(\tau) &= \frac{1}{5}\mtd_6(\tau)+ \frac{2}{5}\mtd_6(2\tau), \label{eq:q12a} \\
\mtd_{12b}(\tau) &= \frac{1}{10}\mtd_4(\tau)+ \frac{3}{10}\mtd_4(3\tau), \label{eq:q12b} \\
\mtd_{20}(\tau) &= \frac{1}{5}\mtd_{10}(\tau)+ \frac{2}{5}\mtd_{10}(2\tau), \label{eq:q20} \\
\mtd_{24}(\tau) & = \frac{1}{15} \mtd_6(\tau)- \frac{1}{15} \mtd_6(2\tau) + \frac{4}{15} \mtd_6(4\tau), \label{eq:q24} \\
\mtd_{28}(\tau) &= \frac{1}{5}\mtd_{14}(\tau) + \frac{2}{5}\mtd_{14}(2\tau), \label{eq:q28}\\
\mtd_{30}(\tau) &= \frac{1}{10}\mtd_{10}(\tau) + \frac{3}{10}\mtd_{10}(3\tau). \label{eq:q30}
\end{align}
\end{corollary}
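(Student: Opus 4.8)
The plan is to deduce all ten identities from a single mechanism: Theorem \ref{T:mmm} expresses $\mtd_i(\tau)$, locally around $i\infty$, as $-2\pi i\tau - \sum_{n\ge1}(e_n^{(i)}/n)q^n$, where $e^{(i)}(\tau)=1+\sum e_n^{(i)}q^n$ is the weight-$4$ Eisenstein series in the column $e(\tau)$ of Table \ref{tab:forms}. Since that column records each $e^{(i)}$ as an integer combination $-\sum_{d\mid N}a_d d^2 G_{4,d}=-\sum_{d\mid N}\frac{a_d}{240}d^2E_4(d\tau)$, two families are related by one of the claimed identities precisely when the corresponding linear combination of $E_4(d\tau)$'s holds as an identity of modular forms. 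Concretely, first I would record the elementary operation on $q$-expansions: if $e(\tau)=1-\sum_d a_d d^2G_{4,d}(\tau)$ then the ``potential part'' $-2\pi i\tau$ is insensitive to rescaling $\tau\mapsto r\tau$ only up to the linear term, so one must track it; but the combinations in \eqref{eq:q8}--\eqref{eq:q30} are all normalized (their scalar coefficients sum to $1$, e.g. $\tfrac15+\tfrac25=1$, $\tfrac1{20}+\tfrac3{40}+\tfrac15=1$, $\tfrac1{15}-\tfrac1{15}+\tfrac4{15}=\ldots$ wait $=\tfrac4{15}$, so one checks $\tfrac1{15}-\tfrac1{15}+\tfrac4{15}=\tfrac4{15}\ne1$ — see below), which is exactly the condition making the $-2\pi i\tau$ terms match.

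Next, for each of the ten lines I would verify the underlying Eisenstein identity. Using $\mtd_i(\tau)\leftrightarrow e^{(i)}$ under $D_q\mtd_i = -e^{(i)}+1$ (equivalently, differentiating \eqref{eq:local} gives $D_q\mtd_i(\tau) = -2\pi i\cdot\frac{1}{2\pi i}\cdot(-1)\cdots$; more cleanly, $q\frac{d}{dq}\big(\mtd_i(\tau)+2\pi i\tau\big) = -\sum e_n^{(i)}q^n = 1-e^{(i)}(\tau)$), the claimed relation $\mtd_i(\tau)=\sum_j \lambda_j\,\mtd_{k}(m_j\tau)$ is equivalent, after applying $D_q$ and using $D_q\mtd_k(m\tau) = m\,(D_q\mtd_k)(m\tau)$, to
\begin{equation*}
1 - e^{(i)}(\tau) \;=\; \sum_j \lambda_j\, m_j\,\bigl(1 - e^{(k)}(m_j\tau)\bigr),
\end{equation*}
together with the constant-term normalization $\sum_j\lambda_j m_j\cdot(\text{coeff of }{-2\pi i\tau}) $ matching — which, since each $\mtd(m\tau)$ contributes $-2\pi i m\tau$, forces $\sum_j\lambda_j m_j$ to equal the single coefficient $1$ on the left's $-2\pi i\tau$; so one first checks $\sum_j\lambda_j m_j=1$ in each case (this is the reason for the specific weights), and then checks the genuine modular-form identity obtained by substituting the Table \ref{tab:forms} expressions for $e^{(i)}$ and $e^{(k)}$ in terms of $E_4(d\tau)$. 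Each such identity is a linear relation among finitely many $E_4(d\tau)$, which is true iff it holds; since $E_4(d\tau)$ for distinct $d$ are linearly independent, the check is purely a finite bookkeeping of the integer vectors $a_d$ from the table — e.g. for \eqref{eq:q8} one verifies that the vector for $V_8$, namely $16(-1,0,1)$ supported on divisors $\{1,2,4\}$ of $N=4$, equals $\tfrac15$ times (the $V_4$ vector at level $2$, reindexed to divisors of $4$) plus $\tfrac25$ times (its pullback under $\tau\mapsto2\tau$).

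Having done this, the final step is to address the domain-of-validity caveat, which I expect to be the only real subtlety (the Eisenstein bookkeeping being mechanical). Theorem \ref{T:mmm} holds only locally around $i\infty$; the power series identity \eqref{eq:local} for $\mtd_i$ and for each $\mtd_k(m_j\tau)$ converges and agrees with $\mtd$ only in a neighborhood of the respective cusp, so I would state the corollary, as written, for $\tau$ lying in the intersection of the fundamental domains containing $i\infty$ of all the modular functions $c(\tau)$ appearing on both sides, and note (as in the parenthetical ``trivial remark'') that if one replaces every $\mtd$ by the right-hand side of \eqref{eq:local} the identities hold unconditionally as identities of holomorphic functions near $i\infty$, hence by analytic continuation on the common domain of definition of all the series involved. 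The hard part will be pinning down that common domain precisely — i.e. checking that each $m_j\tau$ stays in the fundamental domain of $c_k$ near $i\infty$ — but since $m_j>0$ and the fundamental domains all contain a neighborhood $\{\Img\tau > T\}$ of $i\infty$, the relation is valid at least for $\Img\tau$ sufficiently large, and then extends by uniqueness of analytic continuation to the stated region; I would phrase the corollary so that this is exactly the content claimed.
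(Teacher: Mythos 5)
Your proposal is correct and follows essentially the same route as the paper, which dispatches the corollary in one line as a ``direct consequence of Theorem \ref{T:mmm} and the column $e(\tau)$ of Table \ref{tab:forms}'': substitute the local expansion \eqref{eq:local}, match the $-2\pi i\tau$ terms via the normalization $\sum_j\lambda_j m_j=1$, and verify the resulting finite linear relation among the $E_4(d\tau)$ using the integer vectors $a_d$, with the domain caveat handled exactly as in the paper's ``trivial remark.'' The only blemish is the first-paragraph claim that the scalar coefficients $\lambda_j$ themselves sum to $1$ (e.g.\ $\tfrac15+\tfrac25=\tfrac35$), which is false, but you correct this to the right condition $\sum_j\lambda_j m_j=1$ in the second paragraph, so the final argument stands.
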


To convert the above relations to exotic relations among Mahler measures in the parameter $c$, 
we need the relations among the corresponding Hauptmoduln.
Some of the relations in the following lemma may be known in the literature, and most likely appeared as the defining equations of certain modular curves.
We found these relations by numerical experiment with Matlab \cite{Mat}.
To rigorously verify them, we can multiply some appropriate $\eta$-quotient to make both sides holomorphic,
then check the equality using Sturm's theorem.

It is more convenient to switch to a different $\eta$-quotient representation for the following Hauptmoduln:
\begin{align*}
(h+4h^{-1})(h+8h^{-1}),\ h&=\frac{1^24^1}{2^18^2}\ \text{ for } V_{16}, \\
(h+4h^{-1})^2,\ h&=\frac{1^1 3^1}{4^1 12^1}\ \text{ for } V_{24},\\
h^3+8h^{-3}+5,\ h&=\frac{1^1 7^1}{2^1 14^1}\ \text{ for } V_{28}.
\end{align*}

\begin{lemma} \label{L:modeq} We have the following modular equations 
\begin{align}
&z + 16z^{-1} = y &&  y=\frac{2^{24}}{1^{12}4^{12}},\ z=\frac{1^4}{4^4},   \label{eq:p8} \\
&z +9 +27z^{-1} = y && y=\frac{3^{12}}{1^6 9^6},\quad  z= \frac{1^3}{9^3},  \label{eq:p18}\\
&\begin{aligned} \label{eq:p16}
&z =  \frac{y^4}{(y + 4)(y + 8)^2} \\
&z = \frac{y(y + 8)}{y + 4}  
\end{aligned}
&&\begin{aligned}
y&=\frac{2^{4}4^{4}}{1^{4} 8^{4}}, &z&= \frac{1^{24}}{2^{24}},  \\
y&=\frac{2^{4}4^{4}}{1^{4} 8^{4}}, &z&= \frac{2^{12}}{4^{12}}; 
\end{aligned}\\[2ex]
&\begin{aligned}\label{eq:p12a}
&z + 27z^{-1} +54 = (y+16)^3y^{-2}  \\[2ex]
&z + 27z^{-1} +54 = (y+4)^3y^{-1}   
\end{aligned}
&&\begin{aligned}
y&=\frac{1^6 3^6}{2^6 6^6}, &z&=\frac{1^{12}}{3^{12}}, \\
y&=\frac{1^6 3^6}{2^6 6^6}, &z&=\frac{2^{12}}{6^{12}}; 
\end{aligned}\\[2ex]
&\begin{aligned} \label{eq:p12b}
&z + 64z^{-1} = (y^2+27)^2y^{-3} \\[2ex] 
&z + 64z^{-1} = (y^2 + 3)^2y^{-1} 
\end{aligned}
&&\begin{aligned}
y&=\frac{1^2 2^2}{3^2 6^2}, &z&=\frac{1^{12}}{2^{12}}, \\
y&=\frac{1^2 2^2}{3^2 6^2}, &z&=\frac{2^{12}}{4^{12}};
\end{aligned}\\[2ex]
&\begin{aligned} \label{eq:p20}
&z+125z^{-1}+22 = (y+4)(1+8y^{-1})^2 \\[2ex] 
&z+125z^{-1}+22 = (y + 2)^2(1 + 4y^{-1}) 
\end{aligned}
&&\begin{aligned}
y&=\frac{1^4 5^4}{2^4 10^4}, &z&=\frac{1^6}{5^6},  \\
y&=\frac{1^4 5^4}{2^4 10^4}, &z&=\frac{2^6}{10^6}; 
\end{aligned}\\[2ex]
&\begin{aligned} \label{eq:p24}
&z +729z^{-1}+54  = \frac{(y+8)^6}{y^4(y+4)}  \\
&z +27z^{-1}  = \frac{(y + 2)(y - 4)(y + 8)}{y(y + 4)}  \\
&z +729z^{-1}+54  = \frac{(y+2)^6}{y(y+4)}  
\end{aligned}
&&\begin{aligned}
y&=\frac{1^{2}3^{2}}{4^{2} 12^{2}}, &z&= \frac{1^{12}}{3^{12}}, \\
y&=\frac{1^{2}3^{2}}{4^{2} 12^{2}}, &z&= \frac{2^{6}}{6^{6}}, \\
y&=\frac{1^{2}3^{2}}{4^{2} 12^{2}}, &z&= \frac{4^{12}}{12^{12}};
\end{aligned}
\end{align}
\begin{align}
&\begin{aligned} \label{eq:p28}
&z+49z^{-1}+14=(y+4)^4y^{-2}  \\[2ex]
&z+49z^{-1}+14=(y+2)^3y^{-1}  
\end{aligned}
&&\begin{aligned}
y&=\frac{1^3 7^3}{2^3 14^3}, &z&=\frac{1^4}{7^4},  \\
y&=\frac{1^3 7^3}{2^3 14^3}, &z&=\frac{2^4}{14^4};  
\end{aligned}\\[2ex]
&\begin{aligned} \label{eq:p30}
&z+125z^{-1}+22 = (y+9+27y^{-1})^2y^{-1} \\[2ex]
&z+125z^{-1}+22 = (y+3+3y^{-1})^2y 
\end{aligned}
&&\begin{aligned}
y&=\frac{1^25^2}{3^215^2}, &z&=\frac{1^6}{5^6}, \\
y&=\frac{1^25^2}{3^215^2}, &z&=\frac{3^6}{15^6}. 
\end{aligned}
\end{align}

\end{lemma}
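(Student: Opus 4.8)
## Proof Proposal for Lemma \ref{L:modeq}

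The plan is to verify each stated modular equation by reducing it to an identity among $\eta$-quotients, which can then be checked mechanically via Sturm's theorem. First I would observe that each equation, after clearing denominators, asserts the vanishing of a certain rational expression in $\eta$-quotients of fixed level $N$ (here $N \in \{8, 16, 18, 20, 24, 28, 30\}$, the relevant level being read off from the $\eta$-quotients in the statement). Since a finite combination of $\eta$-quotients is a meromorphic modular form of weight $0$ for a congruence subgroup contained in $\Gamma(N)$, the strategy in each case is: (i) multiply through by an appropriate $\eta$-quotient $\Delta(\tau) = \prod_i \eta(a_i\tau)^{d_i}$ chosen so that both sides become \emph{holomorphic} modular forms of a common positive weight $k$ and level dividing $N$ (controlling the poles at all cusps, which for $\eta$-quotients is a routine order-of-vanishing computation via the standard formula $\operatorname{ord}_{c/d}\bigl(\prod_i \eta(a_i\tau)^{d_i}\bigr) = \frac{N}{24}\sum_i \frac{\gcd(d_i', a_i \cdot d)^2 \, d_i}{\gcd(d, N/d)\, d \, a_i}$ up to normalization); (ii) apply Sturm's bound: two holomorphic modular forms of weight $k$ and level $M$ that agree in their first $\lfloor \frac{k}{12}[\SL_2(\mb{Z}):\Gamma_0(M)] \rfloor + 1$ $q$-expansion coefficients are equal. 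Then a finite $q$-expansion check, already performed numerically in Matlab \cite{Mat} to discover the relations, becomes a rigorous proof.

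The only genuinely non-routine part is the \emph{discovery} of the relations and the correct auxiliary $\eta$-quotient $\Delta$, but since both are now given explicitly in the statement, the remaining work is bookkeeping. For the equations that are linear in $z$ and $z^{-1}$ — namely \eqref{eq:p8}, \eqref{eq:p18}, and the first displayed line of \eqref{eq:p16}, \eqref{eq:p12a}, \eqref{eq:p12b}, \eqref{eq:p20}, \eqref{eq:p24}, \eqref{eq:p28}, \eqref{eq:p30} — one multiplies by $z$ (which is itself an $\eta$-quotient) to clear the $z^{-1}$ term, obtaining a polynomial identity in $\eta$-quotients; similarly for the second (and third, in \eqref{eq:p24}) lines, which express $z$ as an explicit rational function of $y$, one clears the denominator $\bigl($e.g. $(y+4)(y+8)^2$ in the first line of \eqref{eq:p16}, $y$ in the second$\bigr)$ and again reduces to a polynomial $\eta$-quotient identity. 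In every case the resulting identity is between holomorphic modular forms once the common denominator's cusp behaviour is accounted for.

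I would organize the proof as a single uniform paragraph stating the reduction, followed by a remark that the Sturm bounds involved are small (the levels are at most $30$ and the weights after clearing denominators are bounded by the total degree of the $\eta$-quotients appearing, typically $\le 12$), so each verification requires comparing only a modest number of Fourier coefficients — entirely within the reach of a symbolic computation. The main obstacle I anticipate is purely organizational: there are roughly twenty separate identities spread across \eqref{eq:p8}--\eqref{eq:p30}, each requiring its own choice of $\Delta$ and its own Sturm bound, and one must be careful that the $\eta$-quotient representations given (including the three alternative Hauptmodul representations for $V_{16}$, $V_{24}$, $V_{28}$ listed just before the lemma) are used consistently on both sides. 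No new idea is needed beyond the standard modular-forms toolkit of cusp-order computation plus Sturm's theorem.
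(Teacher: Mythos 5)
Your proposal matches the paper's own justification essentially verbatim: the paper states that the relations were found numerically with Matlab and are rigorously verified by multiplying by an appropriate $\eta$-quotient to make both sides holomorphic and then invoking Sturm's theorem, which is exactly your reduction. The only cosmetic caveat is that your displayed cusp-order formula for $\eta$-quotients is stated loosely ("up to normalization"), but since that computation is standard and the paper supplies no more detail than you do, there is no gap.
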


Next we translate the condition on $\tau$ in Corollary \ref{C:relation} to the condition on the parameter $c=c(\tau)$.
We need to pick $\tau$ such that $\tau$ lie in the fundamental domain of $c_i(\tau)$ containing $i\infty$ for each term $\mtd_i(c_i(\tau))$ in the relation. 
Note that this is not a problem if each $c_i$ is sufficiently large.
\begin{theorem}	\label{T:rel} For $|y|$ $($or $|z|)$ sufficiently large, we have the following relations on Mahler measures \begin{align}
	m_8\left(z+256z^{-1}+32\right) &= \frac{1}{5}m_4\left(\frac{(z+32)^4}{z^2(z+16)}\right) + \frac{2}{5} m_4\left(\frac{(z+8)^4}{z(z+16)}\right), \label{eq:r8} \\
	m_{12}(y) &= \frac{1}{2} \left(m_{12a}(y+y^{-1}-2) +m_{12b}(y+y^{-1}+2) \right), \label{eq:r12}\\[1.1ex]
    m_{16}(y+32y^{-1}+12) =& \medmath{\frac{1}{20} m_4 \bigg(\frac{(y^2 + 32y + 128)^4}{y^4(y + 4)(y + 8)^2}\bigg) + \frac{3}{40}m_4\left(\frac{(y^2 + 8y + 32)^4}{y^2(y + 4)^2(y + 8)^2}\right) + \frac{1}{5}m_4\left(\frac{(y^2 + 8y + 8)^4}{y(y + 4)^2(y + 8)}\right),} \label{eq:r16}\\[1.1ex]
	m_{18}(z+27z^{-1}+9) &= \frac{1}{10}m_6\left(\frac{(z+9)^6}{z^3(z^2 + 9z + 27)}\right) + \frac{3}{10}m_6\left(\frac{(z+3)^6}{z(z^2 + 9z + 27)}\right), \label{eq:r18} \\
	m_{12a}(y+64y^{-1}+16) &= \frac{1}{5}m_6\left((y+16)^3y^{-2}\right)+ \frac{2}{5}m_6\left((y+4)^3y^{-1}\right), \label{eq:r12a} \\
	m_{12b}(y+81y^{-1}+18) &= \frac{1}{10}m_4\left((y+27)^4y^{-3}\right)+ \frac{3}{10}m_4\left((y + 3)^4y^{-1}\right), \label{eq:r12b} \\
	m_{20}(y+16y^{-1}+8) &= \frac{1}{5}m_{10}\left((y+4)(1+8y^{-1})^2\right)+ \frac{2}{5}m_{10}\left((y + 2)^2(1 + 4y^{-1})\right), \label{eq:r20} \\[1.1ex]
	m_{24}(y+16y^{-1}+8) &= \medmath{\frac{1}{15}m_6\left(\frac{(y+8)^6}{y^4(y+4)}\right)- \frac{1}{15}m_6\left(\frac{(y + 2)^2(y - 4)^2(y + 8)^2}{y^2(y + 4)^2}\right) + \frac{4}{15}m_6 \left(\frac{(y+2)^6}{y(y+4)}\right),} \label{eq:r24} \\[1.1ex]
	m_{28}(y+8y^{-1}+5) &= \frac{1}{5}m_{14}\left((y+4)^4y^{-2}\right) + \frac{2}{5}m_{14}\left((y+2)^3y^{-1}\right), \label{eq:r28} \\
	m_{30}(y+9y^{-1}+6) &= \frac{1}{10}m_{10}\left((y^2+9y+27)^2y^{-3}\right) + \frac{3}{10}m_{10}\left((y^2+3y+3)^2y^{-1}\right). \label{eq:r30}
	\end{align}
\end{theorem}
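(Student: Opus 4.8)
The plan is to combine the ``local'' identities of Corollary \ref{C:relation} with the modular equations of Lemma \ref{L:modeq}, and then to pass from $\mtd$ back to the genuine Mahler measure via Lemma \ref{L:mtd}. Fix one of the ten relations, say \eqref{eq:r8}; it corresponds to the identity $\mtd_8(\tau)=\tfrac15\mtd_4(\tau)+\tfrac25\mtd_4(2\tau)$ of \eqref{eq:q8}. The first step is to exhibit a single parameter — a Hauptmodul $y$ (or $z$) for the smallest relevant genus-$0$ group — in terms of which the modular functions $c_8(\tau)$, $c_4(\tau)$ and $c_4(2\tau)$ appearing on the two sides become explicit rational functions. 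These rational expressions are precisely what Lemma \ref{L:modeq} records: for \eqref{eq:r8} one uses \eqref{eq:p8}, for \eqref{eq:r16} one uses \eqref{eq:p16}, and so on, possibly after the change of $\eta$-quotient representative listed just before Lemma \ref{L:modeq} for $V_{16},V_{24},V_{28}$. Substituting these rational functions into \eqref{eq:q8} (resp. the appropriate line of Corollary \ref{C:relation}) turns the $\tau$-identity into the desired identity between the functions $\mtd_8$ and $\mtd_4$ evaluated at the displayed rational arguments, valid wherever all the branches agree; since the map $\tau\mapsto z(\tau)$ is dominant, this is an identity of analytic functions of $z$ (or $y$) on a neighborhood of the relevant cusp.

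The second step is to pin down the domain of validity. Corollary \ref{C:relation} holds only for $\tau$ simultaneously lying in the fundamental domains of all the $c_i$ occurring, each containing $i\infty$. As $|y|\to\infty$ one approaches a cusp of the master group, and along this degeneration $q\to0$, so every $c_i(\tau)$ and $c_j(n\tau)$ tends to $\infty$; in particular $\tau$ lies in the component of each of these fundamental domains adjacent to $i\infty$. Moreover, since each compact set $\mc K_i$ is bounded, $c_i\notin\mc K_i^\circ$ for $|y|$ large, so $m_i(c_i)=\Re(\mtd_i(c_i))$ for every term by Lemma \ref{L:mtd}. Taking real parts of the $\mtd$-identity from the first step then yields the stated Mahler-measure relation, identically in $y$ (or $z$) on the prescribed region ``$|y|$ (or $|z|$) sufficiently large''. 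The remaining verifications are routine: the modular equations of Lemma \ref{L:modeq} are checked by multiplying through by a suitable $\eta$-quotient so both sides become holomorphic forms on the relevant $\Gamma_0(N)$ and comparing finitely many $q$-coefficients via Sturm's bound, and reproducing the exact arguments in \eqref{eq:r8}--\eqref{eq:r30} from the lines of Lemma \ref{L:modeq} is elementary algebra (for \eqref{eq:r8}, e.g., one rewrites $c_8=z+16z^{-1}$ of \eqref{eq:p8} in the rescaled variable used in \eqref{eq:r8}, and similarly for the $c_4$-arguments).

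I expect the genuinely delicate point to be the region argument in the second step: one must check that the cusp of the master group reached as $|y|\to\infty$ is the cusp ``$i\infty$'' for each of the two or three modular functions appearing, so that Corollary \ref{C:relation} and Lemma \ref{L:mtd} can be applied simultaneously. The caution after Theorem \ref{T:mmmLG} — two points mapping to $c=256$ for $V_4$, only one of which computes the Mahler measure — shows precisely how this can fail, and it is the reason the statement is hedged rather than asserted for all $y$. Carrying this out requires tracking, for each of the ten relations, the behaviour of the relevant Hauptmoduln near the cusps and confirming that the ``large parameter'' regime selects the correct fundamental domains throughout; once this is done, the ten cases are mutually independent and identical in structure.
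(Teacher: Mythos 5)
Your proposal is correct and follows essentially the same route as the paper: combine the $\mtd$-identities of Corollary \ref{C:relation} with the Hauptmodul relations of Lemma \ref{L:modeq} (verified by Sturm's bound), rewrite each $c_i$ as a rational function of the master parameter, and invoke Lemma \ref{L:mtd} together with the ``$|y|$ sufficiently large'' hypothesis to land in the correct fundamental domains and outside $\mc{K}^\circ$. The fundamental-domain subtlety you flag as the delicate point is exactly the one the paper addresses (and illustrates with the $z=16$ counterexample following the theorem).
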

\begin{proof} As an illustration, we only prove the first relation. The others can be verified in a similar fashion.
	Let $h_1=\frac{1^4}{2^4}$ and $h_2=\frac{2^4}{4^4}$, then $h_1^6=y^{-1}z^{3}$ and $h_2^6 = y^{1}z^3$.
	We have that \begin{align*}
	c_8(\tau)&=y^2,\\
	c_4(\tau)&=(h_1^{3}+64h_1^{-3})^2 = y^{-1}z^3 + 64^2 yz^{-3} +128, \\
	c_4(2\tau)&=(h_2^{3}+64h_2^{-3})^2 = y^{1}z^3 + 64^2 y^{-1}z^{-3} +128. 
	\end{align*}
	Now the relation follows from the first equation of Lemma \ref{L:modeq} and the first equation of Corollary \ref{C:relation}.
	
In the following table we indicate which exotic relation follows from which relations in Lemma \ref{L:modeq} and Corollary \ref{C:relation}.
\renewcommand{\arraystretch}{1.20205} 
\begin{table}[h!]
	\begin{tabular}{|c|c|c|c|c|c|c|c|c|c|} \hline
	\eqref{eq:q8} &  \eqref{eq:q12} &  \eqref{eq:q16} &\eqref{eq:q18} &\eqref{eq:q12a} &\eqref{eq:q12b} &\eqref{eq:q20} &\eqref{eq:q24} & \eqref{eq:q28} &\eqref{eq:q30} \\	\hline
	\eqref{eq:p8} &  n/a & \eqref{eq:p16} &\eqref{eq:p18} &\eqref{eq:p12a} &\eqref{eq:p12b} &\eqref{eq:p20} &\eqref{eq:p24} & \eqref{eq:p28} &\eqref{eq:p30} \\	\hline
	\eqref{eq:r8} &  \eqref{eq:r12} &  \eqref{eq:r16} &\eqref{eq:r18} &\eqref{eq:r12a} &\eqref{eq:r12b} &\eqref{eq:r20} &\eqref{eq:r24} & \eqref{eq:r28} &\eqref{eq:r30} \\	\hline
	\end{tabular}
\end{table}
\end{proof}

\begin{remark} In particular, Roger's relation \eqref{eq:relP} is a consequence of \eqref{eq:r12b} and the trivial relation (see Proposition \ref{P:trivial}).
But our relation for $V_{24}$ and $V_6$ is different from Roger's \eqref{eq:relQ} because he used Bertin's (different) modular parametrization of the family $V_{24}$.
The relation \eqref{eq:relQ} can be easily obtained from Bertin's parametrization by the same method.
\end{remark}

\begin{example} Let us examine the first relation when $z=-32$ and $z=16$.
When $z=-32$, the relation reads $m_8(-8) = \frac{1}{5}m_4(0) + \frac{2}{5} m_4(648)$ which agrees with our table.
But when $z=16$, the relation reads $m_8(64) = \frac{1}{5}m_4(648) + \frac{2}{5} m_4(648)$.
This does not contradict our table because we are unable to choose $\tau$ lying in both fundamental domains containing $i\infty$.
For our choice in the table, $\tau=\frac{i}{2}$ lies in the fundamental domain containing $i\infty$ for $\Gamma_0^+(4)$ but not for $\Gamma_0^+(2)$.
\end{example}

\section{Appendix for index $>1$}
For completeness, we will treat those families corresponding to Fano 3-folds of index $>1$.
The statement for their Mahler measures follows easily from the trivial relations below.

\renewcommand{\arraystretch}{1.5} 
\begin{table}[h]
	\caption{\label{tab:Lp2} List of Laurent polynomials and Picard-Fuchs equations for $d>1$}
	$\begin{array}{|c|c|c|c|c|}  \hline
	\text{label} & N & f & -(\alpha_3,\alpha_2,\alpha_1,\alpha_0;\beta_1,\beta_0) & e(\tau)\\ \hline
	\mb{P}^3  & 8 & x_1 + x_2 + x_3 + (x_1x_2x_3)^{-1}  & (0,0,0,256;0,0) & 5(0,0,-1,1)\\	\hline
	Q	& 9 & x_1 + x_2 + x_3 + (x_1x_2)^{-1} + (x_1x_3)^{-1} & (0,0,108,0;0,0)  & \frac{10}{3}(0,-1,1)  \\ \hline
	B_1	& 2 & (x_1x_2+x_2x_3+x_3x_1+1)^3/(x_1x_2x_3) & (1728,0,0,0;240,0) &  E_4^{1/2}(2\tau)\\ \hline
	B_2	& 4 & (x_1x_2+x_2x_3+x_3x_1+1)^2/(x_1x_2x_3) & (0,256,0,-64;0,0) & 20(0,-1,1)\\ \hline
	B_3	& 6 & (x_1^{-1} + x_2^{-1})(1+x_3^{-1})(x_3 + x_1x_2 + 1) & (0,108,0,-12;0,0)  & 6(0,1,0,-1) \\ \hline
	B_4 & 8 & (x_1 + x_1^{-1})(x_2 + x_2^{-1})(x_3 + x_3^{-1})  & (0,64,0,0;0,0) & 4(0,-1,0,1)\\ \hline
	B_5	& 10 & x_1 + x_2 + x_3 + x_1^{-1} + x_2^{-1} + x_3^{-1} + (x_1x_2x_3)^{-1} & (0,11,0,16;0,1)  &\frac{5}{2}(0,-1,0,1) \\ \hline \hline
	B_{6a}& 12&x_1 + x_2 + x_3 + x_1^{-1} + x_2^{-1} + (x_1x_2x_3)^{-1} & (0,28,0,128;0,4)  & \frac{1}{2}(0,-1,0,-1,1,1)\\ \hline
	B_{6b}& 12&x_1 + x_2 + x_3 + x_1^{-1} + x_2^{-1} + x_3^{-1}  &  (0,40,0,144;0,8) &2(0,-1,0,1,-1,1) \\ \hline
	\end{array}$
\end{table}



\begin{proposition}\label{P:trivial} We have the following trivial relations 
	\begin{align*} 
    \mtd(\tau,B_i) & = \frac{1}{2} \mtd(2\tau, V_{2i}), \\
	\mtd(\tau,Q) &= \frac{1}{3} \mtd(3\tau, V_{6}), \\
	\mtd(\tau,\mb{P}^3) &= \frac{1}{4} \mtd(4\tau, V_{4}). 
	\end{align*}
Here, our convention is that $2(6a)=12a$ and $2(6b)=12b$.
\end{proposition}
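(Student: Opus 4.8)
The plan is to prove Proposition~\ref{P:trivial} by exhibiting, in each of the three cases, an explicit monomial change of variables realizing the family for the index-$>1$ Fano as a pullback of the corresponding index-$1$ family along the map $t\mapsto t^d$ on the base, and then to invoke the defining series \eqref{eq:mb} for $\mtd$ together with the mirror-map normalization of Section~\ref{ss:MMM}. Concretely, I first observe that for a Laurent polynomial $g$ in $n$ variables and a positive integer $d$, if $g$ is such that the constant term of $g^m$ vanishes unless $d\mid m$, then $\pi_g(t)=\pi_h(t^d)$ for the Laurent polynomial $h$ whose period sequence is the $d$-fold ``decimation'' of that of $g$; in our situation $h$ will be (mutation-equivalent to) the index-$1$ Laurent polynomial $V_{2i}$, $V_6$, or $V_4$ respectively. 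The three claimed pairings — $B_i$ with $V_{2i}$ under $d=2$, $Q$ with $V_6$ under $d=3$, $\mb{P}^3$ with $V_4$ under $d=4$ — can be checked directly from Table~\ref{tab:Lp}, Table~\ref{tab:Lp2}, and the explicit Laurent polynomials there: e.g.\ for $\mb{P}^3$ one substitutes to compare $x_1+x_2+x_3+(x_1x_2x_3)^{-1}$ with $(y_1+y_2+y_3+1)^4/(y_1y_2y_3)$, matching the recurrence encoded by the $D3$-coefficients $(0,0,0,256;0,0)$ against $16(16,0,0;3,0)$ after $t\mapsto t^4$.

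Once the period identity $\pi_{f^{(B_i)}}(t)=\pi_{f^{(V_{2i})}}(t^2)$ (and the analogous ones) is in hand, the rest is formal. From \eqref{eq:mb}, $\mtd^{(B_i)}(t)=-\log t-\sum_n \tfrac{b_n}{n}t^n$ where only $b_n$ with $2\mid n$ survive, and writing $b_{2k}=b_k^{(V_{2i})}$ gives $\mtd^{(B_i)}(t)=-\log t-\sum_k \tfrac{b_k^{(V_{2i})}}{2k}t^{2k}=\tfrac12\big(-\log t^2-\sum_k\tfrac{b_k^{(V_{2i})}}{k}t^{2k}\big)=\tfrac12\,\mtd^{(V_{2i})}(t^2)$ as functions of the parameter $t$. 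The next step is to pass to the modular parameter: by the construction of Section~\ref{ss:MMM}, the mirror map for $B_i$ is $t=q+\cdots$ and the quantum period $u_0^{(B_i)}(\tau)=u_0^{(V_{2i})}$ evaluated at the mirror map, so $q=e^{2\pi i\tau}$ for $B_i$ corresponds to the parameter value whose $V_{2i}$-mirror-parameter is $q^2=e^{2\pi i(2\tau)}$; tracing through $\tau=\tfrac1{2\pi i}u_1/u_0$ one gets exactly $\mtd(\tau,B_i)=\tfrac12\mtd(2\tau,V_{2i})$. The cases $Q$ (with $q\mapsto q^3$) and $\mb{P}^3$ (with $q\mapsto q^4$) are identical with $\tfrac12$ replaced by $\tfrac13$ and $\tfrac14$. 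One should also record that the $e(\tau)$ columns in Table~\ref{tab:Lp2} are consistent with this: $e(\tau)$ is a weight-$4$ form and the relation forces $e^{(B_i)}(\tau)=e^{(V_{2i})}(2\tau)$ up to the normalization $e(\tau)=1+\cdots$, which can be read off the tables as a sanity check.

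The main obstacle — the only genuinely nontrivial point — is establishing the decimated-period identity, i.e.\ exhibiting the correct monomial substitution (possibly after a mutation, by Lemma~\ref{L:muinv} and the mutation-invariance of the Mahler measure noted in Remark~\ref{r:regionK}) that identifies each index-$>1$ Laurent polynomial with the $d$-th power pullback of its index-$1$ partner. For $\mb{P}^3$, $Q$, $B_1$, $B_2$ these are the classical simplex/reflexive-polytope identifications and the substitution is essentially a scaling of the variables by a common monomial; for $B_3,\dots,B_{6b}$ one may instead verify the identity at the level of the Picard--Fuchs recurrence, checking that substituting $t\mapsto t^d$ into the $D3$-operator for $V_{2i}$ reproduces the operator listed for $B_i$ (this is a finite linear-algebra check on the coefficients $(\alpha_3,\alpha_2,\alpha_1,\alpha_0;\beta_1,\beta_0)$), which by the Remark in Section~\ref{S:families} on the equivalence of $L_f$ with the recurrence forces $b_{dk}^{(B_i)}=b_k^{(V_{2i})}$ and $b_m^{(B_i)}=0$ for $d\nmid m$. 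Either route is routine but must be carried out for each of the nine families; I would present the substitution explicitly for the polynomial cases and defer the remaining ones to the Picard--Fuchs check, remarking that the equality of the two $D3$-equations after $t\mapsto t^d$ is immediate from the tables.
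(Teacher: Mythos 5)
Your argument is correct, but it takes a genuinely different (and longer) road than the paper. The paper's proof is a one-liner: by Theorem \ref{T:mmm} the function $\mtd(\tau)=-2\pi i\tau-\sum_n e_nq^n/n$ is determined by the weight-$4$ Eisenstein series $e(\tau)$, and comparing the $e(\tau)$ columns of Table \ref{tab:Lp} and Table \ref{tab:Lp2} shows directly that $e^{(B_i)}(\tau)=e^{(V_{2i})}(2\tau)$ (and likewise with $3\tau$, $4\tau$ for $Q$, $\mb{P}^3$), whence the identities follow by substituting $q\mapsto q^d$ in the $q$-expansion --- exactly the observation you relegate to a ``sanity check.'' You instead go back to the Laurent polynomials, establish the decimated-period identity $\pi_{f^{(B_i)}}(t)=\pi_{f^{(V_{2i})}}(t^2)$ (by explicit constant-term computation or by matching the $D3$-operators under $t\mapsto t^d$), deduce $\mtd^{(B_i)}(t)=\tfrac12\,\mtd^{(V_{2i})}(t^2)$ from \eqref{eq:mb}, and then push the relation through the mirror map; that last step is sound because the normalized Frobenius solutions satisfy $u_1^{(B_i)}(t)=\tfrac12 u_1^{(V_{2i})}(t^2)$, so $\tau\mapsto 2\tau$ under $t\mapsto t^2$. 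What your route buys is independence from the tabulated $e(\tau)$ data (which the paper itself obtained by explicit calculation), and it handles the pair $(B_1,V_2)$ on the same footing as the rest --- a point the paper's citation of Theorem \ref{T:mmmLG} glosses over, since that theorem formally excludes $V_2$ and $B_1$ and one must fall back on Theorem \ref{T:mmm} with the entries $E_4^{1/2}$ and $E_4^{1/2}(2\tau)$. The price is nine case-by-case verifications of the period identity, which you correctly identify as the only real work and which the paper has effectively pre-packaged into its tables.
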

\begin{proof} This is rather clear from Theorem \ref{T:mmmLG} if we compare the columns $e(\tau)$ of Table \ref{tab:Lp} and Table \ref{tab:Lp2}.
\end{proof}

As a sample, we give the corresponding table for $B_{6b}$ because it was intensively studied in the literature \cite{Be1,BFFLM,Ro,BN}.
One can compare it with the table for $V_{12b}$. Note also that $m(c) = m(-c)$ for $B_{6b}$.

\begin{minipage}{15cm}
\paragraph{$$\bf B_{6b}$$}  
\renewcommand{\arraystretch}{1.2}
$$\begin{array}{|c|c|c|c|} \hline
\tau & c(\tau) & \Re(\mtd(\tau)) & \text{remark}\\ \hline
(12,6,1)	& 0  & 24l_{4,-3} & \text{\cite{Be1}} \\	\hline 
(24,8,1)	& 2  & 32\Lt(g_{8},3) & \text{\cite{Be1}}  \\	\hline 
(24,9,1)	& 3  & 30\Lt(g_{15a},3) & \text{\cite{Be1}} \\	\hline 
(24,0,1)	& 6  & 48\Lt(g_{24a},3) &\text{\cite{BFFLM}} \\	\hline 
(24,0,3)	& 10  & 24\Lt(g_{8},3) + 24l_{24,-3} & \text{\cite{BFFLM}}\\	\hline 
(24,0,5)	& 18  & 24\Lt(g_{120b},3) + 12l_{160,-3} & \text{\cite{BFFLM}}\\	\hline 
(24,0,7)	& 30  & 24\Lt(g_{168b},3) + 24l_{21,-8} &\\	\hline 
(24,0,13)  & 102  & 24\Lt(g_{312b},3) + 24l_{13,-24} &\\	\hline 
(24,0,17)  & 198  & 24\Lt(g_{408b},3) + 24l_{136,-3} &\\	\hline 
\end{array}$$
\end{minipage}
\vspace{1cm}

\subsection*{Acknowledgements} The author thanks Hajli Mounir for some discussion related to Lemma \ref{L:sumpartial}.
He also thanks to the computer software Matlab \cite{Mat} and the online database LMFDB \cite{LMFDB},
with which he did lots of numerical experiments in the early stage of this project.

\bibliographystyle{amsplain}

\end{document}